\documentclass{imsart}
\usepackage{comment}
\usepackage{mathrsfs}
\usepackage{csquotes}
\RequirePackage[OT1]{fontenc}
\RequirePackage{amsthm,amsmath,amsfonts,color,mathrsfs,graphicx}
\RequirePackage{natbib}
\RequirePackage[colorlinks,citecolor=blue,urlcolor=blue]{hyperref}

\bibliographystyle{imsart-nameyear}
\startlocaldefs
\numberwithin{equation}{section}
\theoremstyle{plain}
\endlocaldefs
\theoremstyle{definition}

\newcommand{\etal}{{\em et al.}}

\usepackage{float,paralist,enumerate,enumitem}
\usepackage{ hyperref}

\setcounter{secnumdepth}{4} 
\setcounter{tocdepth}{4}

\makeatletter
\def\namedlabel#1#2{\begingroup
    #2%
    \def\@currentlabel{#2}%
    \phantomsection\label{#1}\endgroup
}
\makeatother

\newtheorem{thm}{Theorem}[section]
 \newtheorem{cor}{Corollary}[section]
 \newtheorem{lem}{Lemma}[section]
 \newtheorem{prop}{Proposition}[section]
 \theoremstyle{definition}
 \newtheorem{defn}{Definition}[section]
 \newtheorem{rmk}{Remark}[section]

\DeclareMathOperator{\var}{Var}   \DeclareMathOperator{\cov}{Cov}
\DeclareMathOperator{\tr}{tr}

\renewcommand{\(}{\left(}
\renewcommand{\)}{\right)}
\newcommand{\lj}{\left|}
\newcommand{\rj}{\right|}
\newcommand{\De}{\Delta}
\newcommand{\la}{\lambda}

\newcommand{\de}{\delta}
\newcommand{\al}{\alpha}
\renewcommand{\th}{\theta}
\newcommand{\si}{\sigma}

\newcommand{\ga}{\gamma}
\newcommand{\pmu}{\pmb\mu}
\newcommand{\pTh}{\pmb\Theta}
\newcommand{\pLa}{\pmb\Lambda}
\newcommand{\ep}{\epsilon}
\newcommand{\pSi}{\pmb\Sigma}

\newcommand{\pvep}{\pmb\varepsilon}
\newcommand{\vep}{\varepsilon}
\newcommand{\pep}{\pmb\epsilon}

\def\ICV{\mbox{ICV}}

\def\RCV{\mbox{RCV}}
\def\PAV{\mbox{PAV}}
\def\$\cB_m${\mbox{$\cB_m$}}
\DeclareMathOperator*{\argmin}{arg\,min}

\newcommand{\A}{{\bf A}}
\newcommand{\B}{{\bf B}}

\newcommand{\I}{{\bf I}}
\newcommand{\e}{{\bf e}}
\newcommand{\bE}{{\bf E}}
\newcommand{\R}{{\bf R}}
\newcommand{\X}{{\bf X}}
\newcommand{\Y}{{\bf Y}}
\newcommand{\Z}{{\bf Z}}
\newcommand{\W}{{\bf W}}
\newcommand{\V}{{\bf V}}
\renewcommand{\S}{{\bf S}}
\newcommand{\x}{{\bf x}}

\newcommand{\toop}{\stackrel{p}{\longrightarrow}}

\newcommand{\eqD}{\stackrel{\mathcal{D}}{=}}
\newcommand{\toD}{\stackrel{\mathcal{D}}{\to}}

\newcommand{\cF}{{\mathcal F}}
\newcommand{\cL}{{\mathcal L}}
\newcommand{\cA}{{\mathcal A}}
\newcommand{\cB}{{\mathcal B}}
\newcommand{\bC}{{\mathbb C}}
\newcommand{\bR}{{\mathbb R}}
\newcommand{\sD}{{\mathscr{D}}}
\newcommand{\Om}{{\Omega}}

\newcommand{\zz}[1]{\mathbb{#1}}

\def\tSi {{\widetilde{\Sigma}}}
\def\tpSi {{\widetilde{\pmb\Sigma}}}

\def\bpSi {{\breve{\pmb\Sigma}}}
\newcommand{\ol}{\overline}
\newcommand{\ul}{\underline}
\newcommand{\wt}{\widetilde}
\newcommand{\wh}{\widehat}
\def\eps{\varepsilon}

\def\q{\quad}

\newcommand{\ip}[1]{\langle #1 \rangle}

\definecolor{darkviolet}{rgb}{0.58, 0.0, 0.83}

\renewcommand{\theenumi}{\Alph{enumi}}

 \makeatletter
 \renewcommand{\p@enumii}{\theenumi.}
 \makeatother

\begin{document}
\pagenumbering{Alph}
\begin{titlepage}
\maketitle
\thispagestyle{empty}
\end{titlepage}
\pagenumbering{arabic}

\thispagestyle{empty}

\begin{frontmatter}

\title{On the inference about the spectral distribution of high-dimensional covariance matrix based on high-frequency noisy observations}
\runtitle{Infer spectral dist'n of HD Cov based on noisy obs}

\begin{aug}
\author{\fnms{Ningning} \snm{Xia}\ead[label=e1]{xia.ningning@mail.shufe.edu.cn}\thanksref{t1}}
\and
\author{\fnms{Xinghua} \snm{Zheng}\ead[label=e2]{xhzheng@ust.hk}\thanksref{t2}}

\thankstext{t1}{Research partially supported by GRF 606811 and 16305315 of the HKSAR, NSFC 11501348, Shanghai Pujiang Program 15PJ1402300, IRTSHUFE and the state key program in the major research plan of NSFC 91546202.}
\thankstext{t2}{Research partially supported by DAG (HKUST) and GRF 606811 and 16305315 of the HKSAR.}
\runauthor{N. Xia and X. Zheng}

\affiliation{Shanghai University of Finance and Economics, and Hong Kong University of Science and Technology}

\address{School of Statistics and Management, Shanghai\\
	Key Laboratory of Financial Information Technology,\\
	Shanghai University of Finance and Economics\\
	777 Guo Ding Road, China, 200433\\
	\printead{e1}}

\address{Department of information systems,\\
	business statistics and operations management\\
	Hong Kong University of Science and Technology\\
	Clear Water Bay, Kowloon, Hong Kong\\
	\printead{e2}}
\end{aug}


\begin{abstract}
In practice, observations are often contaminated by noise, making the resulting sample covariance matrix a signal-plus-noise sample covariance matrix.
Aiming to make inferences about the spectral distribution of the population covariance matrix under such a situation,  we establish an asymptotic relationship that describes how the limiting spectral distribution of (signal) sample covariance matrices depends on that of signal-plus-noise-type sample covariance matrices.
As an application, we consider inferences about the spectral distribution of integrated covolatility (ICV) matrices of high-dimensional diffusion processes based on high-frequency data with microstructure noise.  The (slightly modified) pre-averaging estimator is a signal-plus-noise sample covariance matrix, and the aforementioned result, together with a (generalized) connection between the spectral distribution of signal sample covariance matrices and that of the population covariance matrix, enables us to propose a two-step procedure to consistently estimate the spectral distribution of ICV for a class of diffusion processes. An alternative approach is further proposed, which possesses several desirable properties: it is more robust, it eliminates the effects of microstructure noise, and the asymptotic relationship that enables consistent estimation of the  spectral distribution of ICV  is the standard Mar\v{c}enko-Pastur equation.
The performance of the two approaches  is examined via simulation studies under both synchronous and asynchronous observation settings.
\end{abstract}

\begin{keyword}[class=AMS]
 \kwd{Primary}\kwd{62H12}\kwd{secondary} \kwd{62G99 }  \kwd{tertiary} \kwd{60F15}
\end{keyword}

\begin{keyword}
\kwd{High-dimension}
\kwd{high-frequency}
\kwd{integrated covariance matrices}
\kwd{Mar\v{c}enko-Pastur equation}
\kwd{microstructure noise}
\end{keyword}
\end{frontmatter}

\section{Introduction}\label{sec:intro}

\subsection{Motivation}\label{ssec:motivation}
Covariance structure is of fundamental importance in multivariate analysis and applications. While in the classical low-dimensional setting, a usually unknown covariance structure can be estimated by the sample covariance matrix, in the high-dimensional setting, it is now well understood that the sample covariance matrix is not a consistent estimator. Furthermore, in many applications the observations are contaminated. Below, we explain one such setting that motivates this work. Similar situations  arise in many other settings, especially in signal processing (see, e.g., \cite{DS2007a},   \cite{ElKaroui10b}, and \cite{HLMNV2012}).

Our motivating question arises in the context of estimating the so-called integrated covariance matrix of the high-dimensional diffusion process and has applications to the study of stock price processes. More specifically, suppose that we have $p$ stocks whose (latent) log price processes are denoted by $(X_t^{j})$ for $j=1,\ldots,p$. Let $\X_t=(X_t^{1},\ldots,X_t^{p})^T$, where $T$ denotes the transpose.
A widely used model for $(\X_t)$ is
\begin{eqnarray}
d\X_t=\pmu_t\, dt + \pTh_t\, d\W_t, ~~~~~ t\in [0,1],
\label{diffusion}
\end{eqnarray}
where $(\pmu_t)=(\mu_t^{1},\ldots,\mu_t^{p})^T$ is a $p$-dimensional drift process,  $(\pTh_t)$ is a $p\times p$ matrix for any~$t$ called the covolatility process, and $(\W_t)$ is a $p$-dimensional standard Brownian motion. Both $(\pmu_t)$ and $(\pTh_t)$ can be stochastic and depend on the  Brownian motion $(\W_t)$. The interval $[0,1]$ is the time period of interest, say, one trading day (=six and a half hours). The integrated covariance (ICV) matrix refers to
\[
\ICV:= \ \int_0^1 {\pmb\Theta}_t{\pmb\Theta}_t^T \, dt.
\]
The ICV matrix, in particular, its spectrum (i.e., its set of eigenvalues), plays an important role in financial applications such as factor analysis and risk management.

A classical estimator of the ICV matrix is the so-called realized covariance (RCV) matrix, which relies on the assumption that $(\X_t)$ could be observed at high frequency. More specifically, suppose that $(\X_t)$ could be observed at time points $t_i=i/n$ for $i=0,1,\ldots,n$. Then, the RCV matrix is defined as
\[
\RCV=\ \sum_{i=1}^n \De\X_i\(\De\X_i\)^T,
\]
where
\[
\De\X_i= \( \begin{array}{c}
\De X_i^{1}\\
\vdots\\
\De X_i^{p} \end{array} \)
:= \( \begin{array}{c}
X_{t_i}^{1}-X_{t_{i-1}}^{1}\\
\vdots\\
X_{t_i}^{p}-X_{t_{i-1}}^{p} \end{array} \)
\]
stands for the vector of log returns over the period $[(i-1)/n, i/n]$.
The consistency and central limit theorems for the RCV matrix under such a setting \emph{and} when the dimension $p$ is fixed are well known;  see, for example, \cite{AB98},
\cite{ABDL01}, \cite{BNS02}, \cite{jacodprotter98}, \cite{myklandzhang06},
among others.

To obtain a better understanding of the above setting, it is instructive to connect it with the usual multivariate analysis setting. In the simplest case, when $\pmu_t \equiv 0$ and $\pTh_t \equiv~\pTh$, we have (1) $\ICV=\pTh \pTh^T$; (2) the returns $\De\X_i\eqD \Z_i/\sqrt{n}$, where $\Z_i\sim_{i.i.d.}  N(0, \ICV)$, and $\RCV \eqD  \frac{1}{n}\sum_{i=1}^n \Z_i \Z_i^T $. In other words, this simplest setting is equivalent to an \hbox{i.i.d.} observation setting in multivariate analysis, where the sample covariance matrix is used to estimate the population covariance matrix. In general, this situation is more complicated because both $(\pmu_t)$ and $(\pTh_t)$ can be stochastic and dependent on the underlying Brownian motion.

The so-called  market microstructure noise presents another challenge.
In practice, the observed prices are always contaminated versions of the latent prices, the error being referred to as market microstructure noise. Such noise is induced by various frictions in the trading process such as the bid-ask spread and the discreteness of price. Despite its small size,
market microstructure noise accumulates at high frequency and badly affects inferences about the latent price processes.
\cite{LPS15} compare various volatility estimators and point out that microstructure noise is not negligible when the sampling frequency is higher than one observation per five minutes.

The following additive model has been widely adopted in recent studies on volatility estimation:
\begin{equation}\label{eq:noise_additive}
\Y_{t_i}=\X_{t_i}+\pvep_i, ~~~ i=1,\cdots,n,
\end{equation}
where $\Y_{t_i}=(Y_t^{1},\ldots,Y_t^{p})^T$ denotes the observations and $\pvep_i=(\eps_i^{1},\ldots,\eps_i^{p})^T$ denotes the noise, which is \hbox{i.i.d.}
independent of $(\X_t)$ with $E(\pvep_i)=0$ and certain covariance matrix $\pSi_e$.
Observe that under \eqref{eq:noise_additive}, the observed log-returns $\Delta \Y_{t_i}:=\Y_{t_i} - \Y_{t_{i-1}}$ relate to the true log-returns $\Delta \X_{t_i}$ by the following equation:
\begin{equation}\label{eq:signal_noise_additive}
\Delta \Y_{t_i}=\Delta \X_{t_i}+\Delta\pvep_i,  ~~~ i=1,\cdots,n,
\end{equation}
where, as usual, $\Delta \pvep_{i}:=\pvep_{i} - \pvep_{{i-1}}$. We are therefore in a noisy observation setting in which the observations are contaminated by additive noise. Such a setting forms the basis of the current work.

One striking feature in \eqref{eq:signal_noise_additive} that differs from most noisy observation settings is that as the observation frequency $n$ goes to infinity, the signal, namely, the true log-return $\Delta \X_{t_i}$ becomes diminishingly small, while the noise $\Delta\pvep_i$ remains being of the same order of magnitude. Therefore, the signal-to-noise ratio goes to 0. A direct consequence is that even when the dimension is $p=1$, the optimal rate for estimating $\ICV$ is only $n^{1/4}$ instead of the usual~$\sqrt{n}$; see \cite{gloterjacod01}. In other words, due to the \emph{dominance of noise over signal}, the ``effective sample size'' is only $O(\sqrt{n})$ rather than~$n$. This can be clearly seen from the preaveraging method that we will explain in Section \ref{ssec:preaveraging}.

While the problem above constitutes our main motivation for considering a signal-plus-noise observation setting, our results are not restricted to this particular application. Our first main result, Theorem \ref{thm:LSD_signal_noise}, applies to a general setting where the signal and noise are of the same order of magnitude.

\subsection{Summary of main results}\label{ssec:summary}
Our main goal is to make inferences about the spectral distribution of the underlying population covariance matrix, in the setting above, the $\ICV$ matrix, based on the noisy observations~$(\Y_{t_i})$ as in \eqref{eq:noise_additive}. We provide two approaches, which we summarize as follows.

Approach I requires two steps. We shall introduce an intermediate matrix, $\cA_m$ as defined in \eqref{eq:conv_noiseless_pav} below. Think of the $\ICV$ matrix as the underlying population covariance matrix. Because our observations are contaminated, the resulting sample covariance matrix is a signal-plus-noise sample covariance matrix. The intermediate matrix $\cA_m$ is a sample covariance matrix based on only signals. The two steps are then
\begin{itemize}
  \item Step 1: derive the Stieltjes transform of the spectral distribution of~$\cA_m$ based on the signal-plus-noise sample covariance matrix.
  \item Step 2: based on the derived Stieltjes transform of $\cA_m$ in Step 1, further consistently estimate the spectral distribution of $\ICV$.
\end{itemize}
The two steps rely on two asymptotic results, Theorems \ref{thm:LSD_signal_noise} and \ref{thm:main}, respectively. Roughly speaking, Theorem \ref{thm:LSD_signal_noise} enables us to make inferences about the signal sample covariance based on noisy observations, and Theorem \ref{thm:main} allows us to go further back to the population covariance matrix.

Approach II is more direct. It makes use of some special properties in the setting that we described in Section \ref{ssec:motivation}. The properties allow us to asymptotically eliminate the effect of noise, thus saving us from Step I above and enabling us to take only one step, which relies on Theorem \ref{thm:B_n}.
We also see below that Approach II is more robust, particularly in that it allows for rather general dependence structures in the noise process, both cross-sectional and temporal,  and even  dependence between the  noise and  price process. The drawback is that Approach~II heavily relies on the setting in Section \ref{ssec:motivation}, while Approach~I can be applied to wider situations involving noisy observations.

In the simulation studies, we explain in detail how to generalize the algorithm  proposed by \cite{ElKaroui08} to implement the estimation procedure in practice. We can see that
Approaches I and II both yield satisfactory estimates of the spectral distribution of the targeting ICV matrix. Other algorithms, such as those introduced in \cite{Mestre08}, \cite{BCY10} and \cite{LW2015}, can also be adapted to our setting.

The rest of the paper is organized as follows. Section \ref{sec:main_result} explains the two approaches and the underlying theories.
Section \ref{sec:simulation} demonstrates how to implement the two approaches in practice.
Section \ref{sec:conclusion} concludes.
The proofs are given in the supplementary article \cite{XZ15_supp}.

\medskip \noindent \textbf{Notation.}
 For any $p\times p$ Hermitian matrix $\pSi$ with eigenvalues
$\lambda_1,\ldots,\lambda_p$,  its {empirical
spectral distribution} (ESD) is defined as
\[
   F^{\pSi}(x):=\frac{1}{p} \# \{j: \lambda_j \leq
   x\},\quad \mbox{for } x\in\zz{R}.
\]
The limit of ESD as $p\to\infty$, if it exists, is referred to as the {limiting
spectral distribution}, or LSD for short; see, for example, the book \cite{BS_book}.
For any real  matrix ${\bf A}$, $\|{\bf A}\|=\sqrt{\lambda_{\textrm{max}}({\bf A}{\bf A}^T)}$ denotes its spectral norm, where $\lambda_{\textrm{max}}$ denotes the largest eigenvalue.
For any nonnegative definite matrix $\B$, $\B^{1/2}$ denotes its square root matrix.
For any $z\in\mathbb{C}$, write $\Re(z)$ and $\Im(z)$ as its real and imaginary parts, respectively, and $\bar{z}$ as its complex conjugate. For any distribution $F$, $m_{F}(\cdot)$ denotes its Stieltjes transform, which is defined as
\[
m_{F}(z)= \ \int \ \frac{1}{\lambda-z} \ dF(\lambda), ~~~ {\rm for}~ z\in\mathbb{C}^+{:=}\{z\in\mathbb{C}: \Im(z)>0\}.
\]
In particular,  the Stieltjes transform of $F^{\pSi}$ above, denoted by $m_{\pSi}(\cdot):=m_{F^{\pSi}}(\cdot)$, is given by
\[
m_{\pSi}(z)= \frac{\tr((\pSi - z\I)^{-1})}{p}, ~~~ {\rm for}~ z\in\mathbb{C}^+,
\]
where $\I$ is the identity matrix.
Finally, for any vector $\x$, $|\x|$ stands for its Euclidean norm.

\section{Main results}\label{sec:main_result}

\subsection{Preliminary: pre-averaging method}\label{ssec:preaveraging}
The pre-averaging (PAV) method is introduced in \cite{JLMPV09}, \cite{PV09}, and \cite{CKP10} to deal with microstructure noise. Other approaches include the  two/multi-scales estimators \citep{ZMA05,Zhang06,Zhang11}, realized kernel \citep{BHLS08,BHLS11}, and quasi-maximum likelihood method \citep{Xiu10,AFX10}. We use a slight variant of the PAV approach in this work. First, choose a window length $k$. Then, group the intervals $[(i-1)/n,{i}/{n}]$ for $i=1,\ldots,2k\cdot\lfloor{n}/(2k)\rfloor$ into $m:=\lfloor{n}/(2k)\rfloor$ pairs of non-overlapping windows, each of width $(2k)/n$, where $\lfloor\cdot\rfloor$ represents rounding down to the nearest integer.
Introduce the following notation for any process ${\bf V}=({\bf V}_t)_{t\geq 0}$,
\begin{equation}\label{eq:Delta_V}
\Delta {\bf V}_i
={\bf V}_{i/n}-{\bf V}_{(i-1)/n}, ~~\overline{{\bf V}}_i=\frac{1}{k}\sum_{j=0}^{k-1}{\bf V}_{((i-1)k+j)/n},
~~{\rm and}~~
\Delta\overline{{\bf V}}_{2i}=\overline{{\bf V}}_{2i}-\overline{{\bf V}}_{2i-1}.
\end{equation}
With such notation, the observed return based on the pre-averaged price becomes
\begin{equation}\label{eq:rtn_pav}
\Delta\ol{\Y}_{2i}=\Delta\ol{\X}_{2i} + \Delta\ol{\pvep}_{2i}.
\end{equation}

One key observation is that if $k$ is chosen to be of order $\sqrt{n}$ (which is the order chosen in \cite{JLMPV09}, \cite{PV09}, and \cite{CKP10}), then, in \eqref{eq:rtn_pav}, the ``signal'' $\De\ol\X_{2i}$ and ``noise''~$\De\ol\pvep_{2i}$ can be shown to be of the same order of magnitude.   Observe that with such a chosen window width, the resulting number of windows is only of order~$\sqrt{n}$; hence our statement earlier that the effective sample size is only~$O(\sqrt{n})$ and consequently, even in the one-dimensional case, the optimal rate of convergence for estimating $\ICV$ is only $\sqrt{\sqrt{n}} = n^{1/4}$.

\subsection{Approach I}

\subsubsection{Step 1: From signal-plus-noise back to signal}\label{ssec:App_I_1}

Our starting point is the PAV matrix, which is defined as a multiple of the sample covariance matrix of $\Delta\ol{\Y}_{2i}$, the returns based on the pre-averaged prices:
\begin{equation}\label{PAV}
\aligned
\PAV&:=3\sum_{i=1}^m \left(\Delta\overline{{\bf Y}}_{2i}\right)\left(\Delta\overline{{\bf Y}}_{2i}\right)^T.
\endaligned
\end{equation}
(Coefficient 3 is inherited from \cite{JLMPV09} and comes from the convergence \eqref{eq:conv_noiseless_pav} below.) This is slightly different from the estimator in \cite{JLMPV09}, particularly in that there is no bias correction term involved. This is because (1) in the high-dimensional setting, even with the bias correction, the PAV is still inconsistent, just as in high-dimensional multivariate analysis the sample covariance matrix is inconsistent; and (2) our version of the PAV facilitates further analysis, which leads us all the way back to the target $\ICV$.

The matrix $\PAV$ can be viewed as the sample covariance matrix based on observations $\De\ol{\X}_{2i}+\De\ol{\pvep}_{2i}$, which model the situation of the information vector $\De\ol{\X}_{2i}$ being contaminated by additive noise $\De\ol{\pvep}_{2i}$.
\cite{DS2007a} consider such signal-plus-noise sample covariance matrices as
\[
\S_n=\dfrac{1}{n}\(\A_n+\si_n \bE_n\)\(\A_n+\si_n \bE_n\)^T,
\]
where $\A_n$ indicates a matrix consisting of signals, while $\bE_n$, independent of $(\A_n)$, consists of \hbox{i.i.d.} noise.
Let $\cA_n:= \A_n\A_n^T/n$ be the signal sample covariance matrix.
Under certain regularity conditions, the authors show that if $F^{\cA_n}$ converges to a probability distribution $F^{\cA}$, then so does $F^{\S_n}$. They further show that the LSD of $\S_n$ is determined by $F^{\cA}$ in that its Stieltjes transform $m=m(z)$ uniquely solves the following equation
\begin{eqnarray}\label{GMP_eqn}
~~~~~
m=\int\dfrac{dF^{\cA}(t)}{\dfrac{t}{1+\sigma^2ym}-(1+\sigma^2ym)z+\sigma^2(1-y)}, ~~~~ \mbox{for all } z\in\bC^+,
\end{eqnarray}
where $\si^2,y$ are given in Assumptions \eqref{asm:sigma_n_conv} and \eqref{asm:yn_conv} below.

Our goal in this article, as in many other applications, is to make inferences about signals based on noisy observations; in this case, to make inferences about $\cA_n$ based on $\S_n$. This motivates us to
investigate the problem from a different angle than \cite{DS2007a}. Unlike \eqref{GMP_eqn}, which states how the LSD of $\S_n$ depends on that of~$\cA_n$, we show how the LSD of $\cA_n$ depends on that of $\S_n$; see equation \eqref{eqn:LSD_signal_to_noisy} below. We further explain how such a relation enables us to consistently estimate the ESD of~$\cA_n$ based on $\S_n$.

The relation that we establish is essentially an inverse relation of~\eqref{GMP_eqn}.  Inverting such relations is in general notoriously difficult. For example, the Mar\v{c}enko-Pastur equation, which is similar to equation~\eqref{GMP_eqn} and describes how the LSD of the sample covariance matrix depends on that of the population covariance matrix, is established long time ago in \cite{MP67}, but it was after more than forty years  that researchers realized how the (unobservable) ESD of the population covariance matrix can be recovered based on the (observable) ESD of the sample covariance matrix (\citep{ElKaroui08,Mestre08, BCY10, LW2015} \hbox{etc}). In particular,  \cite{Mestre08} derived an inverse formula for estimating  individual population eigenvalues, under the  assumption that the population covariance matrix admits only finitely many distinct eigenvalues with known multiplicity. Our first result, Theorem~\ref{thm:LSD_signal_noise} below, gives an inverse relation of \eqref{GMP_eqn} that allows the derivation of the ESD of $\cA_n$ based on that of $\S_n$, under rather general assumptions.

We impose the following assumptions on the underlying matrices.
Assumptions \eqref{asm:FA_conv} and \eqref{asm:eps} are from \cite{DS2007a}; in particular, \eqref{asm:FA_conv} is about the convergence of the ESD of the signal sample covariance matrix.
Assumption \eqref{asm:sigma_n_conv} allows the variance of noise to depend on $n$ as in the case of PAV.  Assumption \eqref{asm:yn_conv} is standard in the studies of random matrices.

\begin{compactenum}\setcounter{enumi}{1}
\item[]
\begin{compactenum}
\item\label{asm:FA_conv} ${\bf A}_n$ is $p\times n$,  independent of $\pvep_n$, and with $\cA_n=(1/n)\A_n\A_n^T$,
$F^{\cA_n}\toD F^{\cA}$, where $F^{\cA}$ is a  probability distribution with the Stieltjes transform denoted by $m_{\cA}(\cdot)$;
\item\label{asm:sigma_n_conv} $\sigma_n\geq 0$ with $\lim_{n\to\infty} \sigma_n=\sigma\in[0,\infty) $;
\item\label{asm:eps} $\bE_n=(\ep_{ij})$ is $p\times n$ with the entries $\ep_{ij}$ being \hbox{\hbox{i.i.d.}} and centered with unit variance; and
\item\label{asm:yn_conv} $n=n(p)$ with $y_n=p/n\to y >0$ as $p\to\infty$.
\end{compactenum}
\end{compactenum}

We now present our first result about how the LSD of $\cA_n$ depends on that of $\S_n$.
\begin{thm}\label{thm:LSD_signal_noise}\label{THM1}
Suppose that Assumptions \eqref{asm:FA_conv}-\eqref{asm:yn_conv} hold. Then,
almost surely, the ESD of $\S_n$ converges in distribution to a probability distribution~$F$.
Moreover, if $F$ is supported by a finite interval $[a,b]$ with $a>0$ and possibly has a point mass at 0, then $F^{\cA}$ can be identified as follows.
For all $z\in\bC^+$ such that $m_{\cA}(z)\in D_{\cA}(y,\si^2):= \{\xi\in\bC: ~
z(1-y\si^2 \xi)^2- \si^2(y-1)(1-y\si^2 \xi) \in\bC^+\}$, $m_{\cA}(z)$  uniquely solves the following equation
\begin{eqnarray}
~~~~~~~~
m_{\cA}(z) = \displaystyle\int
\dfrac{dF(\tau)}{\dfrac{\tau}{1-y\si^2m_{\cA}(z)}-z(1-y\si^2m_{\cA}(z))+\si^2(y-1)}.
\label{eqn:LSD_signal_to_noisy}
\end{eqnarray}
\end{thm}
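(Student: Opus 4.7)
The plan is to get the convergence of $F^{\S_n}$ directly from the Dozier--Silverstein theorem, and then to obtain the inversion formula \eqref{eqn:LSD_signal_to_noisy} as an algebraic rearrangement of the forward equation \eqref{GMP_eqn}. For the first assertion, Assumptions \eqref{asm:FA_conv}--\eqref{asm:yn_conv} fit the framework of \cite{DS2007a}, so almost surely $F^{\S_n}\toD F$ and $m_F$ is the unique $\bC^+$-valued function satisfying \eqref{GMP_eqn} for every $\tilde z\in\bC^+$; accommodating the mild extension $\sigma_n\to\sigma$ (rather than $\sigma_n\equiv\sigma$) is a standard truncation/perturbation step controlling $\|\S_n - \S_n'\|$ between the actual matrix and its $\sigma$-normalized surrogate.

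To derive \eqref{eqn:LSD_signal_to_noisy}, set $c:=1+\sigma^2 y\, m_F(\tilde z)$ and rewrite the denominator in \eqref{GMP_eqn} as $(t-c^2\tilde z+c\sigma^2(1-y))/c$; this gives the symmetric identity
\[
m_F(\tilde z)\;=\;c\cdot m_{\cA}\bigl(c^2\tilde z-c\sigma^2(1-y)\bigr).
\]
For a target $z\in\bC^+$, choose $\tilde z$ so that $c^2\tilde z-c\sigma^2(1-y)=z$. Eliminating $m_F(\tilde z)$ between $m_F(\tilde z)=c\,m_{\cA}(z)$ and $c=1+\sigma^2 y\, m_F(\tilde z)$ forces $c(1-y\sigma^2 m_{\cA}(z))=1$, that is, $c=1/w$ with $w:=1-y\sigma^2 m_{\cA}(z)$; correspondingly $\tilde z=zw^2-w\sigma^2(y-1)$. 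Substituting these expressions for $c$ and $\tilde z$ into $m_F(\tilde z)=\int dF(\tau)/(\tau-\tilde z)$ and multiplying numerator and denominator of the integrand by $w$ produces exactly \eqref{eqn:LSD_signal_to_noisy}.

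For uniqueness at fixed $z$, suppose $\xi\in D_{\cA}(y,\sigma^2)$ satisfies \eqref{eqn:LSD_signal_to_noisy} with $m_{\cA}(z)$ replaced by $\xi$. Writing $w:=1-y\sigma^2\xi$ and $\tilde z:=zw^2-w\sigma^2(y-1)$, the defining condition of $D_{\cA}$ guarantees $\tilde z\in\bC^+$, so the equation reduces to $\xi=w\,m_F(\tilde z)$. Running the substitution backward with the candidate $c':=1/w$, a direct algebraic check yields both $c'^2\tilde z-c'\sigma^2(1-y)=z$ and $c'=1+\sigma^2 y\, m_F(\tilde z)$; the symmetric identity at $\tilde z$ then forces $m_F(\tilde z)=c'\, m_{\cA}(z)=m_{\cA}(z)/w$, so $\xi=m_{\cA}(z)$. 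The main technical delicacy I anticipate is the branch/domain bookkeeping: one must confirm that $D_{\cA}(y,\sigma^2)$ is large enough to contain $m_{\cA}(z)$ for the $z$'s of interest, and that the backward substitution genuinely lands on the Stieltjes-transform branch rather than some spurious root of the implicit equation. The compact-support hypothesis on $F$ (with $a>0$) plays no direct role in the $\bC^+$-uniqueness itself, but it is what permits analytic continuation of $m_F$ across a neighborhood of $0$ --- the workhorse behind the practical recovery of $F^{\cA}$ in Step 2 of Approach I.
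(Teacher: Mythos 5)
Your overall architecture -- obtain the forward Dozier--Silverstein equation, algebraically invert it via the change of variable $\tilde z\leftrightarrow z$, and prove uniqueness by running the substitution backwards -- matches the paper's, and the algebraic core ($c=1/w$, $\tilde z=zw^2+w\sigma^2(1-y)$, dividing the integrand by $w$) is precisely the manipulation that appears in the supplement via the facts $g(\alpha)=1/d(\gamma)$ and $\gamma=\alpha g^2(\alpha)-\sigma^2(y-1)g(\alpha)$. The uniqueness argument is likewise the same as the paper's, and you are right that the compact-support hypothesis does not enter the algebra or the $\bC^+$-uniqueness per se.

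Where you genuinely diverge is in how the forward equation is secured under Assumption (F.ii). The paper does \emph{not} merely cite equation~(1.1) of Dozier--Silverstein and perturb; it proves a self-contained asymptotic identity (Proposition~A.1) in which $\sigma_n$ is carried all the way through: a fixed-point $t_n$ solving the empirical equation in a carefully chosen strip $\sD$, Banach-contraction existence, tightness, and convergence $t_n\to t$ (Lemmas~A.1--A.5), followed by dominated convergence to obtain the key relation $m(z-t\sigma^2)=(1+\delta)\,m_{\cA}(z(1+\delta))$ on a half-plane $\bC^*$ and then analytic continuation. This is the machinery you replace with ``a standard truncation/perturbation step controlling $\|\S_n-\S_n'\|$.'' That step is not as routine as you suggest: after the usual truncation $\|(1/n)\A_n\A_n^T\|\le\log n$ (Assumption~(F.iii) in the supplement), the cross term in $\S_n-\S_n'$ contributes $O\!\left(|\sigma_n-\sigma|\sqrt{\log n}\right)$ in operator norm, which does \emph{not} vanish under the rate-free hypothesis $\sigma_n\to\sigma$. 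Making the perturbation route rigorous requires either a two-parameter truncation (truncate $\A_n$ at a fixed level $C$, compare, then let $C\to\infty$ with a swap-of-limits argument via Levy/rank inequalities), or a continuity-of-fixed-point argument -- which is essentially what the paper's Lemmas~A.1--A.5 supply. So your route is viable and conceptually shorter, but the cost is hidden in the word ``standard,'' whereas the paper pays it explicitly; your route also forgoes the uniform control on $z\in\bC^*$ and the properties of $t(z)$ (analyticity, $t_2\to 0$) that the compact-support-with-$a>0$ hypothesis is actually used for in the paper's Lemma~A.2, rather than only for analytic continuation near $0$ as you suggest.
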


\begin{rmk}\label{rmk:alpha_range}
The restriction on $m_{\cA}(z)$ to be in $D_{\cA}$  is such that the integral on the right hand side of \eqref{eqn:LSD_signal_to_noisy} is well-defined.
Note that because $m_{\cA}(z)\to 0$ and $z m_{\cA}(z)\to -1$ as $\Im(z)\to \infty$,
$m_{\cA}(z)$ does belong to $D_{\cA}$ for all $z$ with $\Im(z)$ sufficiently large.
Furthermore, by the uniqueness of analytic continuation, knowing the values of $m_{\cA}(z)$ for $z$ with $\Im(z)$ sufficiently large is sufficient to determine $m_{\cA}(z)$ for all $z\in\bC^+.$
\end{rmk}

Let us explain how Theorem \ref{thm:LSD_signal_noise} can be used to make inferences about signals based on noisy observations.
\begin{itemize}
\item[(i)]  In practice, we observe noisy observations and can compute ${\bf S}_n$ and hence its ESD.
We can then replace $F$ in \eqref{eqn:LSD_signal_to_noisy} with $F^{{\bf S}_n}$ and solve  for $m_{\cA_n}(z)$.
The empirical version of \eqref{eqn:LSD_signal_to_noisy} can be solved numerically using, for example, the R package ``rootSolve.''
The uniqueness of the solution to equation~\eqref{eqn:LSD_signal_to_noisy} is theoretically justified using analytic tools. For its empirical version, we prove that if $\wh{m_{\cA_n}(z)}$ solves the empirical version, then it is close to the true $m_{\cA_n}$ (see Appendix \ref{appendix:claim}). This property guarantees that even if the empirical version of \eqref{eqn:LSD_signal_to_noisy} admits multiple solutions, they are all close to the true one. Consequently, because $m_{\cA_n}(z)$ fully characterizes the ESD of $\cA_n$, the estimated $m_{\cA_n}(z)$ enables us to consistently estimate the ESD.
In the simulation studies, we explain in detail how to implement this procedure in practice.
\item[(ii)] More importantly, to further estimate the ESD of the population covariance matrix, in the next step to be developed, we need $m_{\cA_n}(z)$. Theorem \ref{thm:LSD_signal_noise} provides such a necessary input. This is an important outcome of establishing the inverse relation \eqref{eqn:LSD_signal_to_noisy}.
\end{itemize}

In practice, if we are only interested in estimating the spectral distribution of the population covariance matrix, then, because in the second step we only need $m_{\cA_n}(z)$, there is actually no need to estimate the ESD of $\cA_n$. In the simulation studies, we still include this part but only for the purpose of illustrating the application of Theorem \ref{thm:LSD_signal_noise}.

We now apply Theorem \ref{thm:LSD_signal_noise} to our PAV matrix.
As mentioned in the summary in Section \ref{ssec:summary}, in Step 1, we  relate the PAV matrix to an intermediate matrix $\cA_m$ defined as follows:
\begin{equation}\label{dfn:A_m}
\cA_m:=3\sum_{i=1}^m \Delta\overline{{\bf X}}_{2i} \cdot (\Delta\overline{{\bf X}}_{2i})^T.
\end{equation}
It differs from the PAV matrix in that it does not involve the noise and can be regarded as a signal sample covariance matrix.
The assumptions under our setting analogous to \eqref{asm:FA_conv}-\eqref{asm:yn_conv} for Theorem \ref{thm:LSD_signal_noise} are then as follows.
\begin{compactenum}\setcounter{enumi}{2}
\item[]
\begin{compactenum}
\item\label{asm:A_conv} the ESD of $\cA_m$ converges to a probability distribution $F^{\cA}$ with the Stieltjes transform denoted by $m_{\cA}(z)$;
\item\label{asm:noise} the noise $(\pvep_i)_{1\le i\le n}$ are independent of $(\X_t)$ and are i.i.d. with zero mean and covariance matrix $\pSi_e=\si_p^2 \I$ for some $\si_p>0$ and $\si_p\to\si_e>0$ as $p\to\infty$;
\item\label{asm:k_PAV} $k=\lfloor\theta\sqrt{n}\rfloor$ for some $\th\in(0,\infty)$, and $m=\lfloor\frac{n}{2k}\rfloor$ satisfies $\lim_{p\to\infty} p/m=y$.
\end{compactenum}
\end{compactenum}

We then have the following Corollary as a direct consequence of Theorem~\ref{thm:LSD_signal_noise}.
\begin{cor}\label{cor:A_PAV}
Suppose that for all $p$, $(\X_t)$ is a $p$-dimensional process satisfying \eqref{diffusion}. Suppose also that Assumptions \eqref{asm:A_conv}-\eqref{asm:k_PAV} hold.
Then, almost surely, the ESD of PAV defined in \eqref{PAV} converges to a probability distribution $F$.
Moreover, if $F$ is supported by a finite interval $[a,b]$ with $a>0$ and possibly has a point mass at 0,  then $F^{\cA}$ can be identified as follows.
For all $z\in\bC^+$ such that $m_{\cA}(z)\in D_{\cA}(y,3\th^{-2}\si_e^2)$,
$m_{\cA}(z)$  uniquely solves the following equation
\begin{equation}\label{eqn:m_A}
m_{\cA}(z)= \displaystyle\int \dfrac{dF(\tau)}{\dfrac{\tau}{1- 3 y\th^{-2}\si_e^2m_{\cA}(z)}-z(1- 3 y\th^{-2}\si_e^2 m_{\cA}(z))+ 3 \th^{-2}\si_e^2(y-1)}.
\end{equation}
\end{cor}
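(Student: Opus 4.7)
The plan is to derive Corollary \ref{cor:A_PAV} as a direct application of Theorem \ref{thm:LSD_signal_noise} by putting the PAV matrix into the ``$\frac{1}{n}(\A_n+\sigma_n\bE_n)(\A_n+\sigma_n\bE_n)^T$'' framework, with the roles of $n$, $\sigma_n$ and $\bE_n$ identified from the pre-averaged returns. The key algebraic observation is that, by \eqref{eq:rtn_pav},
\[
\PAV \;=\; 3\sum_{i=1}^m \bigl(\Delta\overline{\X}_{2i}+\Delta\overline{\pvep}_{2i}\bigr)\bigl(\Delta\overline{\X}_{2i}+\Delta\overline{\pvep}_{2i}\bigr)^T \;=\; \tfrac{1}{m}\bigl(\A_m+\sigma_m\bE_m\bigr)\bigl(\A_m+\sigma_m\bE_m\bigr)^T,
\]
where I would take $\A_m$ to be the $p\times m$ matrix whose $i$-th column is $\sqrt{3m}\,\Delta\overline{\X}_{2i}$, and $\sigma_m \bE_m$ to be the $p\times m$ matrix whose $i$-th column is $\sqrt{3m}\,\Delta\overline{\pvep}_{2i}$. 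By construction $\tfrac{1}{m}\A_m\A_m^T = \cA_m$, so Assumption \eqref{asm:A_conv} gives exactly Assumption \eqref{asm:FA_conv}.

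Next I would verify the noise assumption \eqref{asm:eps}. Under \eqref{asm:noise}, with $\pSi_e=\sigma_p^2\I$, the entries of $\Delta\overline{\pvep}_{2i}$ are independent across the row index $\ell$ (since different coordinates of $\pvep_i$ are independent), and the columns $i=1,\ldots,m$ involve disjoint blocks $\{(2i-2)k,\ldots,2ik-1\}$ of the underlying $\pvep$'s and are therefore mutually independent. A direct calculation gives
\[
\mathrm{Var}\bigl(\Delta\overline{\pvep}_{2i}^{(\ell)}\bigr) \;=\; \tfrac{1}{k^2}\cdot 2k\cdot\sigma_p^2 \;=\; \tfrac{2\sigma_p^2}{k},
\]
so choosing $\sigma_m^2 := 6m\sigma_p^2/k$ makes the entries of $\bE_m$ i.i.d. with mean $0$ and unit variance, confirming \eqref{asm:eps}. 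The independence between $\A_m$ and $\bE_m$ is inherited from the independence of $(\pvep_i)$ and $(\X_t)$ in \eqref{asm:noise}.

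Next I would pin down the limiting parameters. Using $k=\lfloor\theta\sqrt{n}\rfloor$ and $m=\lfloor n/(2k)\rfloor$ from \eqref{asm:k_PAV}, one has $m/k \to 1/(2\theta^2)$, so
\[
\sigma_m^2 \;=\; \tfrac{6m\sigma_p^2}{k} \;\longrightarrow\; \tfrac{3\sigma_e^2}{\theta^2} \;=\; 3\theta^{-2}\sigma_e^2,
\]
which provides Assumption \eqref{asm:sigma_n_conv} with $\sigma^2 = 3\theta^{-2}\sigma_e^2$. Assumption \eqref{asm:yn_conv} is immediate from $p/m\to y$ in \eqref{asm:k_PAV}. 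With all four assumptions verified for the ``sample size'' $m$, Theorem \ref{thm:LSD_signal_noise} gives convergence of $F^{\PAV}$ to some $F$ and yields the identification \eqref{eqn:LSD_signal_to_noisy} with $\sigma^2$ replaced by $3\theta^{-2}\sigma_e^2$; substituting this value produces \eqref{eqn:m_A} and $D_{\cA}(y,3\theta^{-2}\sigma_e^2)$ exactly as stated.

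The only genuinely delicate point, in my view, is that the distribution of each entry of $\bE_m$ depends on $n$ (it is a normalized sum of $2k$ underlying noise variables), so I would need to confirm that Theorem \ref{thm:LSD_signal_noise} (and the underlying result of \cite{DS2007a}) tolerates a triangular-array setup for $\bE_m$—which it does provided the standard moment conditions are verified. Given the bounded support / finite moments of $\pvep_i$ that are implicit in \eqref{asm:noise}, the averaged noise $\Delta\overline{\pvep}_{2i}^{(\ell)}$ inherits the required uniform moment bounds (in fact it becomes more concentrated as $k$ grows), so the application of Theorem \ref{thm:LSD_signal_noise} goes through without change.
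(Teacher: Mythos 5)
Your derivation is correct and matches the argument the paper intends: the paper gives no explicit proof of Corollary~\ref{cor:A_PAV}, stating only that it is ``a direct consequence of Theorem~\ref{thm:LSD_signal_noise},'' and your identification $\A_m \mapsto (\sqrt{3m}\,\Delta\overline{\X}_{2i})_i$, $\sigma_m\bE_m \mapsto (\sqrt{3m}\,\Delta\overline{\pvep}_{2i})_i$, the verification that $\frac{1}{m}\A_m\A_m^T=\cA_m$, the variance calculation giving $\sigma_m^2 = 6m\sigma_p^2/k$, and the limit $\sigma_m^2 \to 3\theta^{-2}\sigma_e^2$ via $m/k\to 1/(2\theta^2)$ are exactly the steps this gloss leaves implicit. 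The observation that disjoint index blocks make the columns of $\bE_m$ independent is also the right one.

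Two remarks worth making explicit, both of which reflect looseness in the paper's own statement rather than errors on your part. First, you write that ``different coordinates of $\pvep_i$ are independent,'' but Assumption~\eqref{asm:noise} only requires $\Cov(\pvep_i)=\sigma_p^2\I$, i.e.\ uncorrelated coordinates; independence of the entries of $\bE_m$, which Assumption~\eqref{asm:eps} of Theorem~\ref{thm:LSD_signal_noise} demands, does \emph{not} follow from uncorrelatedness. Evidently the paper intends i.i.d.\ coordinates (otherwise the corollary is not a direct consequence of the theorem), so this is best regarded as a gap in the stated hypotheses which you should acknowledge rather than silently fill. Second, you appeal to ``bounded support / finite moments of $\pvep_i$ that are implicit in \eqref{asm:noise},'' but \eqref{asm:noise} states nothing beyond second moments; the triangular-array extension of Theorem~\ref{thm:LSD_signal_noise} that you correctly identify as necessary does require some uniform integrability (a Lindeberg-type condition on the squared entries of $\bE_m$), and it would be cleaner to note that this holds here because each entry of $\bE_m$ is a normalized sum of $2k$ i.i.d.\ noise terms with $k\to\infty$, so the entries become asymptotically Gaussian and their squares are uniformly integrable once, say, a $(2+\delta)$-moment on $\vep^j_1$ is assumed — again a hypothesis the paper does not state but implicitly uses.
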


\begin{rmk}\label{rmk:thm_apply_general_case}
Although Corollary \ref{cor:A_PAV} is stated for the case when noise components have the same standard deviations, it can readily be applied to the case when the covariance matrix  ${\pmb\Sigma}_e$  is a general diagonal matrix, say, ${\rm diag}(d_1^2,\ldots,d_p^2)$. To see this, let $d_{max}^2=\max(d_1^2,\ldots,d_p^2)$. We can then artificially add additional $\tilde{\pvep}_i$ to the original observations, where $\tilde{\pvep}_i$ are independent of $\pvep_i$ and are \hbox{i.i.d.} with zero mean  and covariance matrix $\widetilde{{\pmb\Sigma}}_e={\rm diag}(d_{max}^2-d_1^2,\ldots,d_{max}^2-d_p^2)$. The noise components in the modified observations then have the same standard deviation $d_{max}$, and Corollary \ref{cor:A_PAV} can  be applied. Note that the variances, $d_1^2,\ldots,d_p^2$, can be consistently estimated; see, for example, Theorem A.1 in \cite{ZMA05}. A similar remark applies to Theorem \ref{thm:LSD_signal_noise}.
\end{rmk}

\subsubsection{Step 2: From signal to population}\label{ssec:App_I_2}
Step 1 enables us to infer the spectral distribution of $\cA_m$ based on $\PAV$. However, just as in high-dimensional multivariate analysis where the sample covariance matrix is  not consistent, neither is $\cA_m$. This is why we need this second step, which enables us to go further back to the $\ICV$ matrix.

First, we introduce some structural assumptions on the latent process $\X$ in order to go further.
Note that the term $\Delta\overline{{\bf V}}_{2i}$ in \eqref{eq:Delta_V} can be written in a more clear form by using the triangular kernel:
\begin{equation}\label{DelV}
\aligned
\Delta\overline{{\bf V}}_{2i}
=&\frac{1}{k}\sum_{j=0}^{k-1}\left({\bf V}_{((2i-1)k+j)/n}-{\bf V}_{((2i-2)k+j)/n}\right)\\
=&\frac{1}{k}\sum_{j=0}^{k-1} \sum_{\ell=1}^k \Delta {\bf V}_{(2i-2)k+j+\ell}\\
=&\sum_{|j|<k} \left(1-\frac{|j|}{k}\right) \Delta{\bf V}_{(2i-1)k+j}.
\endaligned
\end{equation}
Based on this, it can be shown that if the dimension $p$ is fixed, then,  as $n\to\infty$,
\begin{equation}\label{eq:conv_noiseless_pav}
\sum_{i=1}^m \Delta\overline{{\bf X}}_{2i} \cdot (\Delta\overline{{\bf X}}_{2i})^T \toop  \frac{\ICV}{3},
\mbox{ hence } \cA_m \toop  \ICV.
\end{equation}
It is also easy to verify that
\[
\De\ol{\pvep}_{2i} \eqD \sqrt{\dfrac 2k}  \ \e_i,
\]
where $\e_i$s are \hbox{i.i.d.} random vectors with mean zero and covariance matrix~$\pSi_e$.

Corollary \ref{cor:A_PAV} in Step 1 allows us to consistently estimate the ESD of~$\cA_m$. In light of the convergence~\eqref{eq:conv_noiseless_pav}, it would have been sufficient for us to make inferences about the ICV if  the convergence \eqref{eq:conv_noiseless_pav} also held in the high-dimensional case. Unfortunately, this is not the case, and a further step to go from $\cA_m$ to \ICV\ is needed. Such an inference is generally impossible, as can be seen in the following. \ICV\ is an integral $\int_0^1 {\pmb\Theta}_t{\pmb\Theta}_t^T \, dt$. In the simple situation where $\pmu_t\equiv 0$ and $\pTh_t$ is deterministic, the building blocks in defining $\cA_m$, $\Delta{\bf X}_{i}$, are multivariate normals with mean 0 and covariance matrices $\int_{(i-1)/n}^{i/n}{\pmb\Theta}_t{\pmb\Theta}_t^T\, dt.$ The bottom line is all the $n$ covariance matrices, $\int_{(i-1)/n}^{i/n}{\pmb\Theta}_t{\pmb\Theta}_t^T\, dt$ for $i=1,\ldots,n$, could be very different from the \ICV! We can easily change the $n$ covariance matrices $\int_{(i-1)/n}^{i/n}{\pmb\Theta}_t{\pmb\Theta}_t^T\, dt$ and hence the distributions of $\Delta{\bf X}_{i}$ \emph{without} changing \ICV. And as both the dimension $p$ and observation frequency $n$ go to infinity, there is too much freedom in the underlying distributions, which makes inferences about \ICV\  impossible. Certain structural assumptions are necessary to  turn the impossible possible. The simplest  is to assume that ${\pmb\Theta}_t\equiv {\pmb\Theta}$, in which case $\Delta{\bf X}_{i}$ are \hbox{i.i.d.} The apparent drawback  of this assumption is that it cannot capture stochastic  volatility, which is a stylized feature in financial data. The following class of processes, introduced in \cite{ZL11}, accommodates both stochastic  volatility  and the leverage effect while still making the inference about \ICV\  possible (and the theory is already much more complicated than the \hbox{i.i.d.} observation setting).

\begin{defn}\label{classC}
Suppose that $({\bf X}_t)$ is a $p$-dimensional process satisfying~(\ref{diffusion}).
We say that $({\bf X}_t)$ belongs to Class $\mathcal{C}$ if, almost surely,
there {exist} $(\ga_t)\in D([0,1];\bR)$
and $\pLa$ a $p\times p$ matrix
satisfying tr$(\pLa \pLa^T)=p$ such that
\begin{eqnarray}\label{coval:product}
\pTh_t=\gamma_t \ \pLa,
\end{eqnarray}
where $D([0,1];\bR)$ stands for the space of c\`{a}dl\`{a}g functions from $[0,1]$ to $\bR$.
\end{defn}

\begin{rmk}\label{rmk:ga_lam}
The convention that  tr$(\pLa \pLa^T)=p$  is made to
resolve the non-identifiability built into the formulation \eqref{coval:product}, in which
one can multiply $(\ga_{t})$ and divide $\pLa$ by a same constant
without modifying the process~$(\pTh_t)$. It is thus not a restriction.
\end{rmk}

Class $\mathcal{C}$ incorporates some widely used models as special cases:
\begin{itemize}
\item The simplest case is when the drift $\pmu_t\equiv 0$ and $\ga_t\equiv\ga$, in which case the returns $\De\X_i$ are \hbox{i.i.d.}  $ N(0,\ga^2/n\cdot\pLa \pLa^T)$.
\item More generally, again, when the drift $\pmu_t\equiv 0$ while $(\ga_t)$ is independent of the underlying Brownian motion $(\W_t)$, the returns $\De\X_i$ follow mixed normal distributions.
\begin{itemize}
\item Mixed normal distributions, or their asymptotic equivalent form in the high-dimensional setting,
elliptic distributions (see Section~2 of \cite{ElKaroui13} for the asymptotic equivalence), have been widely used in financial applications. \cite{MFE05} state that ``elliptical distributions ... provided far superior models to the multivariate normal for daily and weekly US stock-return data'' and that ``multivariate return data for groups of returns of a similar type often look roughly elliptical.''
\item More recently, El Karoui, in a series of papers \citep{ElKaroui09,ElKaroui10a,ElKaroui13},  studied the Markowitz optimization problem under the setting that the returns follow mixed normal/elliptic distributions.
\end{itemize}
\item Furthermore, Class $\mathcal{C}$ allows the drift $(\pmu_t)$ to be non-zero and more importantly, the $(\gamma_t)$ process to be stochastic and even dependent on the Brownian motion $(\W_t)$ that drives the price process, thus featuring the so-called leverage effect in financial econometrics. The leverage effect  is an important stylized fact of financial returns and has drawn a great deal of attention in recent years; see, for example, \cite{AFL13} and \cite{WM14}.
\end{itemize}

Observe that if $\(\X_t\)$ belongs to Class $\mathcal{C}$, then the ICV matrix
\begin{eqnarray}
\ICV=\int_0^1 \gamma_t^2 \, dt \cdot\breve{\pSi}, ~~~~~~
{\rm where} ~~ \breve{\pSi}={\pLa}{\pLa}^T .
\label{ICV}
\end{eqnarray}
Furthermore, if the drift process ${\pmb \mu}_t\equiv 0$ and $(\gamma_t)$ is independent of $({\bf W}_t)$, then,
conditional on $(\ga_t)$ and using \eqref{DelV}, we have
\[
\Delta\overline{{\bf X}}_{2i} \ \eqD \
\sqrt{w_i}
\ \breve{{\pmb\Sigma}}^{1/2} \ {\bf Z}_i,
\]
where ${\bf Z}_i=(Z_i^{1},\ldots,Z_i^{p})^T$ consists of independent standard normals and
\begin{eqnarray}
w_i&=&
\sum_{|j|<k} \left(1-\frac{|j|}{k}\right)^2 \int_{\frac{(2i-1)k+j-1}{n}}^{\frac{(2i-1)k+j}{n}} \ \gamma_t^2 \, dt.
\label{wie}
\end{eqnarray}
It follows that
\begin{eqnarray*}
\cA_m=3 \sum_{i=1}^m \Delta\overline{{\bf X}}_{2i} \cdot (\Delta\overline{{\bf X}}_{2i})^T & \eqD & 3\sum_{i=1}^m \ w_i \ \breve{\pSi}^{1/2}  \Z_i  \Z_i^T  \breve{\pSi}^{1/2}.
\end{eqnarray*}

We now explain how to make further inferences about the \ICV\ matrix based on $\cA_m$. Doing so relies on another asymptotic result which relates $\cA_m$ to \ICV.

We impose the following assumptions on the underlying process. They are inherited from Proposition 5 of \cite{ZL11}, and we refer the readers to that article for further background and explanations. Observe in particular that Assumption \eqref{asm:leverage} allows the covolatility process to be dependent on the Brownian motion that drives the price processes. Such dependence allows us to capture the leverage effect. Assumptions \eqref{asm:Sigma_bdd} and \eqref{asm:gamma_conv} concern the spectral norm of the \ICV\ matrix. We do not require the norm to be bounded, which allows, for example, spike eigenvalues.

\smallskip
\noindent{\bf Assumption  C:}
\begin{compactenum}\setcounter{enumi}{3}
\item[]
\begin{compactenum}
\item\label{asm:X_in_C} For all $p$, $({\bf X}_t)$ is a $p$-dimensional process in Class $\mathcal{C}$ for some drift process ${\pmb\mu}_t=(\mu_t^{1},\ldots,\mu_t^{p})^T$ and covolatility process  $({\pmb\Theta}_t)=(\gamma_t{\pmb\Lambda})$;
\item\label{asm:mu_bdd} there exists $C_0<\infty$ such that for all $p$ and all $j=1,\ldots,p$, $|\mu_t^{j}|\le C_0$ for all $t\in [0,1)$ almost surely;
\item\label{asm:Sigma_conv} as $p\to\infty$, the ESD of $\breve{{\pmb\Sigma}}={\pmb\Lambda}{\pmb\Lambda}^T$ converges to a probability distribution $\breve{H}$;
\item\label{asm:Sigma_bdd} there exist $C_1<\infty$ and $\kappa<1/6$ such that for all $p$, $\|\breve{{\pmb\Sigma}}\|\le C_1p^{\kappa}$  almost surely;
\item\label{asm:leverage} there exists  a sequence of index sets~$\mathcal{I}_p$ satisfying $\mathcal{I}_p\subset \{1,\ldots,p\}$ and $\#\mathcal{I}_p=o(p)$ such that $(\gamma_t)$ may depend on       $({\bf W}_t)$ but only on $(W_t^{j}: j\in \mathcal{I}_p)$;
\item\label{asm:gamma_conv}  there exists  $C_2<\infty$ such that  for all $p$ and for all $t\in [0,1)$,    $|\gamma_t|\le C_2$ almost surely, and additionally, almost surely,  $(\ga_t)$
converges uniformly to a nonzero process $(\gamma_t^*)$ that is piecewise continuous with finitely many jumps.
\end{compactenum}
\end{compactenum}

We then have the following result connecting $\cA_m$ with \ICV.

\begin{thm}\label{thm:main}\label{THM2}
Suppose that Assumptions \eqref{asm:X_in_C}-\eqref{asm:gamma_conv}  and \eqref{asm:k_PAV} hold, then as $p\to\infty$,
\begin{compactenum}[(i)]
\item the ESDs of \ICV\ and $\cA_m$  converge to probability distributions $H$ and~$F^{\cA}$ respectively, where
\begin{equation}\label{Hx}
H(x) \ = \ \breve{H}(x/\zeta) \q \mbox{for  all }  x\geq 0 ~ {\rm with} ~ \zeta=\int_0^1(\gamma_t^*)^2\, dt;
\end{equation}
\item $F^{\cA}$ and $H$ are related as follows:
\begin{eqnarray}\label{mz_mainresult}
m_{\cA}(z)=-\dfrac{1}{z}\int\dfrac{\zeta}{\tau M(z)+\zeta} ~ dH(\tau),
\end{eqnarray}
where $M(z)$ and another function $\wt{m}(z)$ uniquely solve the following equations in $\bC^+\times \bC^+$
\begin{equation}\label{Mm_z}
\left\{
\begin{array}{lll}
M(z) &=& -\dfrac{1}{z} \displaystyle\int_0^1 \dfrac{(\ga_s^*)^2}{1+y\wt{m}(z) (\ga_s^*)^2}ds,  \\
\wt{m}(z) &=& {-\dfrac{1}{z} \displaystyle\int \dfrac{\tau}{\tau M(z)+\zeta} dH(\tau).}
\end{array}
\right.
\end{equation}
\end{compactenum}
\end{thm}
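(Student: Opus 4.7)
The strategy is to adapt the argument for Proposition~5 of \cite{ZL11}, which handled an analogous result for the (noise-free) realized covariance matrix, to the pre-averaged setting. Part~(i) for $\ICV$ is immediate: by \eqref{ICV} we have $\ICV = \big(\int_0^1 \ga_t^2\,dt\big)\breve{\pSi}$, and uniform convergence of $\ga_t$ to $\ga_t^*$ (Assumption~\eqref{asm:gamma_conv}) gives $\int_0^1 \ga_t^2\,dt \to \zeta$ almost surely, so $F^{\ICV}(x) - \breve H(x/\zeta) \to 0$ at continuity points by Assumption~\eqref{asm:Sigma_conv}. Part~(i) for $\cA_m$ follows from part~(ii) together with the continuity of Stieltjes inversion, so the work is all in part~(ii), which I would carry out in four phases.

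\emph{Phase 1: remove the drift.} Using Assumption~\eqref{asm:mu_bdd}, the drift contribution to each $\Delta\overline{\X}_{2i}$ is $O(k/n)=O(n^{-1/2})$ coordinatewise. Writing $\cA_m = \cA_m^{(0)} + R_m$ where $\cA_m^{(0)}$ is the version built from the martingale part of $\X$, one bounds $\|R_m\|_F$ and applies the rank-and-Frobenius inequality for Stieltjes transforms to show the ESDs of $\cA_m$ and $\cA_m^{(0)}$ have the same limit.

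\emph{Phase 2: decouple $(\ga_t)$ from $({\bf W}_t)$.} This is the main obstacle. Assumption~\eqref{asm:leverage} restricts the $(\ga_t)$-dependence to coordinates in $\mathcal{I}_p$, with $\#\mathcal{I}_p = o(p)$. I would condition on $(W_t^{j}:j\in\mathcal{I}_p)$, freezing $(\ga_t)$ into a deterministic path, and then replace $\breve{\pSi}^{1/2}$ by its restriction to the rows indexed by $\mathcal{I}_p^c$, which differs from the original by a matrix of rank $o(p)$. The rank inequality $|m_{F^A}(z)-m_{F^B}(z)|\le \operatorname{rank}(A-B)/(p|\Im z|)$ then shows the ESDs of $\cA_m$ and its decoupled counterpart share the same limit. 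After this step, conditional on $(\ga_t)$, the vectors $\Delta\overline{\X}_{2i}$ depend only on Brownian coordinates independent of $(\ga_t)$.

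\emph{Phase 3: Gaussian representation.} In the decoupled, no-drift setting, \eqref{DelV} gives $\Delta\overline{\X}_{2i}\eqD\sqrt{w_i}\,\breve{\pSi}^{1/2}\Z_i$ with $w_i$ as in \eqref{wie} and $\Z_i$ i.i.d.\ standard Gaussian, so
\[
\cA_m \ \eqD \ 3\,\breve{\pSi}^{1/2}\Bigl(\sum_{i=1}^m w_i\,\Z_i\Z_i^T\Bigr)\breve{\pSi}^{1/2}.
\]
The uniform convergence $\ga_t\to\ga_t^*$ yields $w_i \approx (\ga_{t_i^*}^*)^2\cdot (2k)/(3n)$ with $t_i^*=(2i-1)k/n$; since $k/n=\th/\sqrt{n}$ and $m\sim \sqrt{n}/(2\th)$, the prefactor $3w_i$ is $\approx(\ga_{t_i^*}^*)^2/m$, and the empirical distribution of $\{(\ga_{t_i^*}^*)^2\}_{i=1}^m$ converges to the law of $(\ga_U^*)^2$ with $U$ uniform on $[0,1]$.

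\emph{Phase 4: invoke a generalized Silverstein/Mar\v{c}enko--Pastur equation.} The object in Phase~3 is a two-sided weighted sample covariance matrix $A^{1/2}ZTZ^*A^{1/2}/m$ with $A=\breve{\pSi}$ having LSD $\breve H$ and diagonal $T$ having LSD equal to the law of $(\ga_U^*)^2$. Silverstein's theorem then gives almost sure convergence of the ESD to a deterministic $F^{\cA}$ whose Stieltjes transform is characterized by a coupled pair of fixed-point equations. Re-expressing integrals against $\breve H$ in terms of $H$ via $H(x)=\breve H(x/\zeta)$, and identifying the auxiliary unknown in the time direction as $M(z)$ and in the space direction as $\wt m(z)$, yields \eqref{mz_mainresult}--\eqref{Mm_z}. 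Uniqueness of the solution in $\bC^+\times\bC^+$ is built into Silverstein's theorem. Aside from Phase~2, a secondary technical point is verifying that the (random) weights $w_i$ can be replaced by $(\ga_{t_i^*}^*)^2\cdot 2k/(3n)$ uniformly, which is handled by the uniform convergence in Assumption~\eqref{asm:gamma_conv} combined with a Lipschitz estimate on Stieltjes transforms.
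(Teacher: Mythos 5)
Your plan follows essentially the same route as the paper's proof: reduce to the drift-free case, decouple $(\ga_t)$ from $(\W_t)$ via conditioning plus a rank-$o(p)$ perturbation argument, pass to the Gaussian representation $\Delta\ol{\X}_{2i}\eqD\sqrt{w_i}\,\breve{\pSi}^{1/2}\Z_i$ with the triangular-kernel weights $w_i$, verify a Riemann-sum-type convergence $3mw_i\approx(\ga^*_{t_i})^2$, and then invoke a two-sided weighted Mar\v{c}enko--Pastur theorem. The paper executes the first and last of these steps with two tailor-made results from \cite{ZL11} -- Lemma~1 therein (a L\'evy-distance comparison lemma for sample covariance matrices perturbed by vectors whose entries are $o(1/\sqrt{p})$) for drift removal, and Theorem~1 therein for the weighted-covariance LSD and the system \eqref{Mm_z} -- rather than your generic Frobenius bound and ``Silverstein's theorem,'' but these are interchangeable tools.

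One point that would need care if you carry out Phase~1 literally: the crude triangle-inequality bound $\|R_m\|_F \le 3\sum_i(|\wt{\V}_i|^2 + 2|\wt{\V}_i||\wt{\Z}_i|)$ gives only $\frac{1}{p}\|R_m\|_F^2 = O(p/\sqrt{n}) = O(1)$, since $p\asymp\sqrt{n}$ under Assumption~\eqref{asm:k_PAV}; to get $o(1)$ you must exploit the independence and zero mean of the $\wt{\Z}_i$ so that the cross terms $\wt{\V}_i\wt{\Z}_i^T$ decorrelate across $i$ (giving $E\|\sum_i\wt{\V}_i\wt{\Z}_i^T\|_F^2 = \sum_i w_i p\,|\wt{\V}_i|^2 = O(p^2/n)$). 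The paper sidesteps this computation entirely by citing the entrywise $o(1/\sqrt{p})$ L\'evy-distance lemma. Aside from that, your proposal is sound and, in Phase~2, actually spells out the decoupling more explicitly than the paper does (the paper simply defers to the opening of the proof of Theorem~1 in \cite{ZL11}).
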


Equation \eqref{mz_mainresult} in Theorem \ref{thm:main} forms the basis for us to further estimate the ESD of $\ICV$. It involves an unknown function $M(z)$, which can be solved as follows. First note that multiplying $\wt{m}(z)$ and $M(z)$ on both sides of the first
and second equations in \eqref{Mm_z}, respectively, yields
\[
\left\{
\begin{array}{lll}
M(z)\cdot \wt{m}(z) &=&-\dfrac{1}{yz} + \dfrac{1}{yz} \int_0^1 \dfrac{1}{1+y\wt{m}(z)(\ga_s^*)^2} ~ ds,\\
M(z)\cdot \wt{m}(z) &=&  -\dfrac{1}{z} \int \dfrac{\tau M(z)}{\tau M(z)+\zeta} ~ dH(\tau) = -\dfrac1z -m_\cA(z),
\end{array}
\right.
\]
where the last step is due to  \eqref{mz_mainresult}. It follows that
\begin{equation}\label{Mm_1_2}
-\dfrac1z -m_{\cA}(z) = -\dfrac{1}{yz} +\dfrac{1}{yz} \int_0^1 \dfrac{1}{1+y\wt{m}(z)(\ga_s^*)^2}~ ds,
\end{equation}
and $\wt{m}(z)=-(1/z+m_\cA(z))/M(z).$
Substituting the last expression of $\wt{m}(z)$ into equation \eqref{Mm_1_2} yields
\begin{eqnarray}\label{eqn:sol_Mz}
\int_0^1\dfrac{M(z)}{M(z)-(\ga_s^*)^2 y(z^{-1}+m_\cA(z))} ~ ds =1-y-yzm_\cA(z).
\end{eqnarray}
$M(z)$ is then obtained by plugging in the $m_\cA(z)$ that we derived in Step 1 into \eqref{eqn:sol_Mz} and solving for the solution that is unique in $\bC^+$ by Theorem 1 in \cite{ZL11}.

Having solved $M(z)$, we can then utilize equation \eqref{mz_mainresult} to estimate the ESD of \ICV\  by generalizing the algorithms in \cite{ElKaroui08}, \cite{Mestre08}, \cite{BCY10} and \cite{LW2015} \hbox{etc.} The resulting estimate can  be shown to be consistent by using an argument similar to that for Theorem 2 of \cite{ElKaroui08}. The estimation procedure is explained in detail in the simulation studies.

\subsection{Approach II}\label{ssec:App_II}
The second step in Approach I involves the process~$(\ga_s^*)$ which is unknown in practice. Estimating this process inevitably introduces an additional source of error. Motivated by this consideration, we draw ideas from \cite{ZL11} and develop an alternative approach that overcomes this difficulty. It is also worth mentioning that the alternative approach allows for rather general dependence structures in the noise process, both cross-sectional and temporal,  and even  dependence between the noise and price process. The temporal dependence between  microstructure noise has been documented in recent studies; see, e.g., \cite{hansenlunde06}, \cite{UO09} and \cite{JLZ_noise}.

We shall define a matrix that is an extension of the time-variation adjusted RCV matrix introduced in \cite{ZL11} to our noisy setting.
To start, fix an $\al\in(1/2,1)$ and $\th\in(0,\infty)$, and
let $k=\lfloor\th n^{\al}\rfloor$ and $m=\lfloor n/(2k)\rfloor$.
The time-variation adjusted PAV matrix is then defined as
\begin{equation}
\cB_m:= 3 \dfrac{\sum_{i=1}^m |\De\ol{\Y}_{2i}|^2}{m}\cdot\sum_{i=1}^m \dfrac{\De\ol{\Y}_{2i}(\De\ol{\Y}_{2i})^T}{|\De\ol{\Y}_{2i}|^2} \
= \ 3 \dfrac{\sum_{i=1}^m |\De\ol{\Y}_{2i}|^2}{p}\ \widetilde{\pSi},
\label{eqn:B_n}
\end{equation}
where
\begin{eqnarray}
\widetilde{\pSi}:= \ \dfrac{p}{m} \ \sum_{i=1}^m \dfrac{\De\ol{\Y}_{2i}(\De\ol{\Y}_{2i})^T}{|\De\ol{\Y}_{2i}|^2}.
\label{tSi}
\end{eqnarray}

Note that here, the window length $k$ has a higher order than  in Theorem~\ref{thm:main}.
The reason is that, after pre-averaging, the underlying returns are $O_p(\sqrt{k/n})$ and the noises are $O_p(\sqrt{1/k})$. In Theorem \ref{thm:main}, we balance the orders of the two terms by choosing $k=O(\sqrt{n})$; here we take $k=O(n^{\alpha})$ for some $\alpha>1/2$, which  enables us to asymptotically eliminate the effect of noise.

Next, recall the concept of $\rho$-mixing coefficients.
\begin{defn} \label{rho_corr}
Suppose that $U=(U_k,k\in\zz{Z})$ is a stationary time series.  For $-\infty \le j\le \ell \le \infty$, let $\cF_j^\ell$ be the $\sigma$-field  generated by the random variables $(U_k: j\le k\le\ell)$.
The $\rho$-mixing coefficients  are defined as
\[
\rho(r) = \sup_{f\in\cL^2(\cF_{-\infty}^0), ~ g\in\cL^2(\cF_r^{\infty})} \ \left| {\rm Corr}(f,g)\right|, \q \mbox{for}\q r\in\zz{N},
\]
where, for any probability space $\Om$,  $\cL^2(\Om)$ refers to the space of square-integrable, $\Om$-measurable random variables.
\end{defn}

We now introduce a number of assumptions. Assumption~\eqref{asm:eps_general} below says that we allow for rather general dependence structures in the noise process, both cross-sectional and temporal. We actually do not put any restrictions on the cross-sectional dependence, and even dependence between the  noise and price process is allowed.  Note also that \cite{JLZ_noise} provides an approach to estimate the decay rate of the $\rho$-mixing coefficients.   Assumption~\eqref{asm:leverage_2} concerns the dependence between the covolatility process and the Brownian motion that drives the price processes. Assumption \eqref{asm:vol_bdd} is about the boundedness of individual volatilities.

\begin{compactenum}\setcounter{enumi}{4}
\item[]
\begin{compactenum}
\item\label{asm:eps_general}  For all $j=1,\cdots,p$, the noise $(\vep_i^j)$ is stationary, has mean 0 and  bounded $4\ell$th moments, and has $\rho$-mixing coefficients $\rho^j(r)$ satisfying  $\max_{j=1,\cdots,p}   \rho^j(r)=O(r^{-\ell})$ for some integer $\ell \geq 2$;
\item\label{asm:leverage_2} there exist $0\le \de_1<1/2$ and a sequence of index sets $\mathcal{I}_p$ satisfying $\mathcal{I}_p\subset\{1,\ldots,p\}$ and $\# \mathcal{I}_p=O(p^{\de_1})$ such that $(\ga_t)$ may depend on $(\W_t)$ but only on $(W_t^{j}: j\in\mathcal{I}_p)$;
\item\label{asm:gamma_bdd} there exists  $C_1<\infty$ such that for all $p$, $|\ga_t|\in\(1/C_1,C_1\)$ for all $t\in[0,1)$ almost surely;
\item\label{asm:vol_bdd}  there exists  $C_2<\infty$ such that for all $p$ and  all $j$, the individual volatilities $\si_t=\sqrt{(\ga_t)^2\cdot\sum_{k=1}^p (\Lambda_{jk})^2}\in\(1/C_2,C_2\)$ for all $t\in[0,1]$ almost surely;
\item\label{asm:Sigma_bdd_2} there exist $C_3<\infty$ and $0\le\de_2<1/2$ such that for all $p$, $\|\ICV\|\le C_3 p^{\de_2}$ almost surely;
\item the $\de_1$ in \eqref{asm:leverage_2} and $\de_2$ in \eqref{asm:Sigma_bdd_2} satisfy that $\de_1+\de_2<1/2$;
\item\label{asm:ym_conv}  $k=\lfloor\theta n^{\alpha}\rfloor$ for some $\th\in(0,\infty)$ and $\alpha\in [(3+\ell)/(2\ell+2) ,1)$,   and $m=\lfloor\frac{n}{2k}\rfloor$ satisfy $\lim_{p\to\infty} p/m=y>0$,
where $\ell$ is the integer in  \eqref{asm:eps_general}.
\end{compactenum}
\end{compactenum}

\begin{rmk}\label{rmk:compatibility} Careful readers may have noted that Assumptions \eqref{asm:k_PAV} and \eqref{asm:ym_conv} are mathematically incompatible, as  Assumption \eqref{asm:k_PAV} requires  $p=O(\sqrt{n})$ while  Assumption \eqref{asm:ym_conv} requires $p=O(n^{1-\al})$ for some $\al\in (1/2,1)$. The two assumptions are, however, perfectly compatible in practice when we deal with finite samples. Take the choices of $(p,n,k)$ in the simulation study in Section \ref{ssec:sim_syn} below  for example.
There, we take $(p,n)=(100,\ 23400)$. When applying Corollary \ref{cor:A_PAV} and Theorem \ref{thm:main}, we take $k=\lfloor0.5\sqrt{n}\rfloor=76$, which leads to $y=p/\lfloor n/2k\rfloor\approx 0.7$ in Assumption \eqref{asm:k_PAV};  when applying Theorem \ref{thm:B_n} below, we take $k=\lfloor 1.5 n^{0.6}\rfloor=627$, which gives  $y=p/\lfloor n/2k\rfloor\approx 5.6$ in Assumption \eqref{asm:ym_conv}.
\end{rmk}

We have the following convergence result connecting $\cB_m$ with ICV.

\begin{thm}\label{thm:B_n}\label{THM3}
Suppose that Assumptions (\ref{asm:X_in_C}), (\ref{asm:mu_bdd}), \eqref{asm:Sigma_conv},  \eqref{asm:gamma_conv}, and \eqref{asm:eps_general}-\eqref{asm:ym_conv} hold. Then, as $p\to\infty$,
the ESDs of $\ICV$ and $\cB_m$ converge almost surely to probability distributions $H$ and
$F^\cB$, respectively, where $H$ satisfies \eqref{Hx} and $F^\cB$ is determined by $H$ in that its Stieltjes transform, denoted by $m_\cB(z)$, satisfies the  following (standard) Mar\v{c}enko-Pastur equation
\begin{eqnarray}\label{eqn:mp}
~~~~ ~~  m_\cB(z)
=\int_{\tau\in\bR}\dfrac{1}{\tau\(1-y(1+z m_\cB(z))\)-z} \ dH(\tau),
~~{\rm for}~ z\in\bC^+.
\end{eqnarray}
\end{thm}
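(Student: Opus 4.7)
The plan is to write $\cB_m=C_m\cdot\wt\pSi$ with $C_m:=3\sum_{i=1}^m|\De\ol\Y_{2i}|^2/p$ and $\wt\pSi$ as in \eqref{tSi}, and analyze the two factors separately. I would prove that (a) $C_m\to\zeta:=\int_0^1(\ga_t^*)^2\,dt$ almost surely, and (b) $\wt\pSi$ has an LSD whose Stieltjes transform $\wt m(z)$ satisfies the standard Mar\v{c}enko--Pastur equation with parameter $y$ and population distribution $\breve H$. Combining (a) and (b) gives $F^\cB(x)=\wt F(x/\zeta)$, and the change of variables $\tau\mapsto\zeta\tau$ in the MP relation for $\wt m$ transforms it into equation \eqref{eqn:mp} for $m_\cB$ with population $H(x)=\breve H(x/\zeta)$, which agrees with \eqref{Hx}.

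For (a), I would expand $|\De\ol\Y_{2i}|^2=|\De\ol\X_{2i}|^2+2\De\ol\X_{2i}^T\De\ol\pvep_{2i}+|\De\ol\pvep_{2i}|^2$. The signal part $3\sum_i|\De\ol\X_{2i}|^2/p$ is the normalized trace of the noiseless analog of \PAV; using the pre-averaging identity \eqref{DelV} together with It\^o-isometry/BDG-type bounds, it converges to $\tr(\ICV)/p=\int_0^1\ga_t^2\,dt$, which tends to $\zeta$ by the uniform convergence $\ga_t\to\ga_t^*$ in Assumption \eqref{asm:gamma_conv} and $\tr(\breve\pSi)=p$. The pure noise term $3\sum_i|\De\ol\pvep_{2i}|^2/p$ has expectation of order $m/k=O(n^{1-2\al})\to 0$ since $\al>1/2$, and concentrates via the moment and $\rho$-mixing bounds in \eqref{asm:eps_general}. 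The cross term is then negligible by Cauchy--Schwarz.

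For (b), the key point is that the self-normalization in $\wt\pSi$ cancels the random time-scale $w_i$ driving $\De\ol\X_{2i}$. I would first replace $\De\ol\Y_{2i}$ by $\De\ol\X_{2i}$ throughout $\wt\pSi$ and control the error: because the noise contributes $O(1/k)$ while the signal contributes $O(k/n)$ per coordinate, the relative perturbation of both the rank-one numerator $\De\ol\Y_{2i}(\De\ol\Y_{2i})^T$ and the scalar denominator $|\De\ol\Y_{2i}|^2$ is $O(n^{1-2\al})\to 0$. Then, conditional on $(\ga_t)$, the representation
\[
\De\ol\X_{2i}\eqD\sqrt{w_i}\,\breve\pSi^{1/2}\Z_i+\mbox{(negligible drift by Assumption \eqref{asm:mu_bdd})},\qquad w_i\mbox{ as in }\eqref{wie},
\]
together with the cancellation
\[
\frac{\De\ol\X_{2i}(\De\ol\X_{2i})^T}{|\De\ol\X_{2i}|^2}=\frac{\breve\pSi^{1/2}\Z_i\Z_i^T\breve\pSi^{1/2}}{\Z_i^T\breve\pSi\Z_i},
\]
eliminates $w_i$. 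Concentration of the quadratic form (valid under Assumption \eqref{asm:Sigma_bdd_2}) gives $\Z_i^T\breve\pSi\Z_i/p\to 1$, so $\wt\pSi$ is asymptotically equivalent in ESD to the Wishart-type matrix $(1/m)\,\breve\pSi^{1/2}\Z\Z^T\breve\pSi^{1/2}$ with $\Z=[\Z_1,\ldots,\Z_m]$, whose LSD is given by the standard Mar\v{c}enko--Pastur equation with parameter $y$ and population $\breve H$.

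The main obstacles will be (i) the leverage effect, which makes $w_i$ and $\Z_i$ dependent, and (ii) the very general dependence structure allowed in the noise. For (i), I would follow the conditioning argument in the proof of Proposition~5 of \cite{ZL11}: Assumptions \eqref{asm:leverage_2} and \eqref{asm:Sigma_bdd_2} with $\de_1+\de_2<1/2$ allow one to condition on $(W_t^j:j\in\mathcal{I}_p)$ and argue that the resulting low-rank/small-norm perturbation of $\wt\pSi$ does not affect the LSD. For (ii), a Rosenthal-type moment inequality for $\rho$-mixing sequences (see, e.g., \cite{JLZ_noise}) provides the required bounds, and the precise threshold $\al\geq(3+\ell)/(2\ell+2)$ in Assumption \eqref{asm:ym_conv} is exactly what is needed to render the noise-induced fluctuations negligible in matrix-norm sense, paralleling the calculations in \cite{ZL11} for the noiseless time-variation adjusted RCV.
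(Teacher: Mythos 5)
Your proposal matches the paper's own proof in all essential respects: the same decomposition $\cB_m = C_m \cdot \wt\pSi$, the same reduction of $C_m\to\zeta$ and of the LSD of $\wt\pSi$ to the noiseless quantities via a uniform-in-$(i,j)$ bound on $\sqrt{p}\,|\De\ol\vep_{2i}^j|/|\De\ol\X_{2i}|$ enabled by a Marcinkiewicz--Zygmund/Rosenthal-type moment inequality under $\rho$-mixing, the same exact cancellation of $w_i$ by self-normalization followed by quadratic-form concentration, and the same final reduction to a Wishart matrix via the ZL11 machinery. The only differences are cosmetic (the paper invokes Proposition 8 of \cite{ZL11} and the Doukhan--Louhichi moment inequality \cite{DL99} rather than Proposition 5 and \cite{JLZ_noise}), so the route is the same.
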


Theorem \ref{thm:B_n} states that the LSDs of ICV and $\cB_m$ are related via  the Mar\v{c}enko-Pastur equation.  Several algorithms have been developed to consistently recover $H$ by inverting the Mar\v{c}enko-Pastur equation; see, for example, \cite{ElKaroui08, Mestre08, BCY10, LW2015} \hbox{etc.}
We can therefore consistently estimate the ESD of ICV by using these existing algorithms.


\subsection{A remark about asynchronicity}\label{ssec:asyn}

In multivariate high-frequency data analysis, in addition to microstructure noise, there is another challenge due to asynchronous trading. In practice, different stocks are traded at different times; consequently, the tick-by-tick data are not observed synchronously. There are several existing methods for synchronizing data, such as the refresh times \citep{BHLS11} and previous tick methods \citep{Zhang11}. Asynchronicity is less of an issue than microstructure noise. For example, as pointed out in \cite{Zhang11}, asynchronicity does not induce bias in the two-scales estimator,  and even the asymptotic variance is the same  as if there is no asynchronicity. While a rigorous treatment is beyond the scope of this article, we expect our methods to work  for asynchronous data  as well. The reason, roughly speaking, is as follows. Take the previous tick method for example. Here, we choose a (usually equally spaced) grid of time points $0=t_0<t_1<\ldots<t_n=1$, and for each stock $j$, for each time point~$t_i$, let~$\tau_i^j$ be the latest transaction time before $t_i$. One then acts as if one observes~$Y_{\tau_i^j}^j$ at time $t_i$ for stock $j$. With the original additive model at time~$\tau_i^j$:
\[
Y_{\tau_i^j}^j = X_{\tau_i^j}^j + \eps_{i}^j,
\]
we have at time $t_i$,
\begin{eqnarray}\label{asyn_obs}
Y_{t_i}^j:=Y_{\tau_i^j}^j = X_{t_i}^j +\left(( X_{\tau_i^j}^j - X_{t_i}^j)  + \eps_{i}^j\right).
\end{eqnarray}
In other words, the asynchronicity induces an additional error $( X_{\tau_i^j}^j - X_{t_i}^j)$. The error is, however, diminishingly small as the sampling frequency $n\to\infty$ because $X_{\tau_i^j}^j - X_{t_i}^j=O_p\Big(\sqrt{t_i - \tau_i^j}\Big) = o_p(1)$. In short, asynchronicity  induces an additional error (and violates our model assumption); fortunately, the error is of negligible order compared with the microstructure noise $(\eps_i^j)$. We therefore keep our focus on the model \eqref{eq:signal_noise_additive}.  In simulation studies, in addition to the synchronous observation setting, we consider an asynchronous setting where the observation times for different stocks are independent Poisson processes. We shall see that our methods still work well (see Section \ref{ssec:sim_asyn} for more details).

\section{Simulation studies}\label{sec:simulation}

In this section, we demonstrate how to estimate the ESD of ICV by using Approach I, which uses the PAV matrix,
and Approach II, which uses the alternative matrix  $\cB_m$.

\subsection{When observations are synchronous}\label{ssec:sim_syn}
We first consider a setting where observations are synchronous. To generate the underlying process $\X$, the process $(\ga_t)$ in Definition \ref{classC} is taken to be  a stochastic U-shaped $(\ga_t)$ process  as follows:
\[
d\gamma_t=-\rho (\ga_t-\phi_t)\,dt +\sigma\, d\wt{W}_t, \q \mbox{for}\q t\in [0,1],
\]
where  $\rho=10$, $\sigma=0.05$,
\[
\phi_t \ = \ 2 \sqrt{0.0009+0.0008\cos(2\pi t)},
\]
and $\wt{W}_t=\sum_{i=1}^p W_t^{i}/ \sqrt{p}$ with $W_t^{i}$ being the $i$th component of the Brownian motion $(\W_t)$ that drives the price process. Observe that such a formulation makes $(\ga_t)$ dependent on \emph{all} the  component of the underlying Brownian motion; hence, Assumptions \eqref{asm:leverage} and \eqref{asm:leverage_2} are both violated. However, we shall see  that our methods still work well.
A sample path of $(\ga_t)$ is given in Figure \ref{fig:gamma_t} in the supplementary article of \cite{XZ15_supp}.


Next, the matrix $\breve{\pSi}=\pLa \pLa^T$ is taken to be $\mathbf{UD U^T}$, where $\mathbf{U}$ is a random orthogonal matrix and $\mathbf{D}$ is a diagonal matrix whose diagonal entries are drawn independently from the Beta$(1,3)$ distribution. Such generated $\breve{\pSi}$ does not necessarily have trace $p$, but as we pointed out in Remark \ref{rmk:ga_lam}, the assumption  $\tr(\breve{\pSi}) =p$ is a convention rather than a requirement.
With such  $(\ga_t)$ and $\breve{\pSi}$, the individual daily volatilities are around $3\%$, which is similar to what one observes in practice.
The latent log price process $(\X_t)$ follows
\begin{equation}\label{sim_setup:syn}
d\X_t =\ga_t  \pLa\, d\W_t, \q \mbox{where}\q \pLa =\breve{\pSi}^{1/2}.
\end{equation}
Finally, the noise $(\pvep_i)_{1\le i\le n}$ is taken to be \hbox{i.i.d.} $N(0,0.0002\, \I)$.

In the studies below, the dimension, that is, the number of stocks $p$, is taken to be 100, and the observation frequency $n$ is set to be 23,400, which corresponds to one observation per second on a regular trading day. Note again that because of the presence of noise, the ``effective sample size'' is only of order $\sqrt{n}\approx 153$, which is comparable to our chosen $p=100$.

\subsubsection{Estimation using Approach I: synchronous setting} \label{ssec:sim_App_I}

We start with Approach I, which involves two steps.

In the first step, we replace $F$  in equation \eqref{eqn:m_A} with the ESD of PAV and solve for $m_\cA(z)$ using the R package ``rootSolve.'' The window length~$k$ in defining PAV is set to be $\lfloor 0.5\sqrt{n}\rfloor=76$. As to the $m_\cA(z)$ to be solved, we choose a set of  $z$'s whose real and imaginary parts are equally spaced in the intervals $[-20,0]$ and $[1,20]$, respectively. Denote these $z$'s by $\{z_j\}_{j=1}^J$ and the estimated $m_{\cA_m}(z_j)$ by $\wh{m_{\cA_m}(z_j)}$.
We then need to estimate the ESD of~$\cA_m$ based on $\{\wh{m_{\cA_m}(z_j)}\}_{j=1}^J$, which we do as follows.

Inspired by the nonparametric estimation method proposed in \cite{ElKaroui08}, we approximate $F^{\cA_m}$ with a weighted sum of point masses
\begin{equation}\label{eq:disc_cA}
F^{\cA_m} \approx \sum_{k=1}^K w_k \de_{x_k},
\end{equation}
where $\{x_1 < x_2< \ldots< x_K\}$ is a grid of points to be specified and the $w_k$s are weights to be estimated.
To choose the grid $\{x_k\}_{k=1}^K$,
naturally, we would like $[x_1, x_K]$ to cover the support of $F^{\cA_m}$, which is unknown.
To overcome this difficulty, note that the support of the ESD of PAV always covers that of $\cA_m$; hence, we can choose $x_k$s to be equally spaced between 0 and the largest eigenvalue of PAV, and we are guaranteed that $[x_1, x_K]$ covers the support of $F^{\cA_m}$.

Next we discuss how to estimate the weights $\{w_k\}$ in \eqref{eq:disc_cA}. Observe that the discretization~\eqref{eq:disc_cA}  gives an approximate Stieltjes transform of $F^{\cA_m}$ as $\sum_{k=1}^K \dfrac{w_k}{x_k-z}.$
Let
\[
e_j^{'}:=\wh{m_{\cA_m}(z_j)} -  \sum_{k=1}^K \dfrac{w_k}{x_k-z_j}, \q j=1,\cdots, J
\]
be  the approximation errors.
The weights $\{w_k\}_{k=1}^K$ are then estimated by minimizing the approximation errors:
\begin{equation}\label{eq:est_wt_A}
\argmin\limits_{(w_1,\ldots,w_k)}\max\limits_{j=1,2,\cdots,J} \max \{|\Re(e_j)|, |\Im(e_j)|\}\quad \text{subject}\ \text{to} \quad \sum_{k=1}^K w_k =1 \mbox{ and } w_k\geq 0.
\end{equation}

Next, in Step 2, we estimate  the ESD of ICV.
By plugging in the $\{\wh{m_{\cA_m}(z_j)}\}_{j=1}^J$ obtained in the first step and solving equation \eqref{eqn:sol_Mz}, we obtain $\{\wh{M(z_j)}\}_{j=1}^J$. The estimation of the ESD of \ICV\ is then conducted similarly as above as follows. Discretize the ESD of $\ICV$ as
\begin{eqnarray}\label{dH_ICV}
F^{\ICV} \approx \sum_{k=1}^K c_k \de_{x_k},
\end{eqnarray}
where $c_k$s are again weights to be estimated.   By equation \eqref{mz_mainresult}, we expect that
\[
e_j^{''}:= \wh{m_{\cA_m}(z_j)} + \dfrac{1}{z_j} \cdot \sum_{k=1}^K ~ c_k~ \dfrac{\zeta}{x_k \wh{M(z_j)} +\zeta}
\]
to be small. The $c_k$s are then estimated by minimizing the approximation errors $e_j^{''}$ just as in \eqref{eq:est_wt_A}.


Figures \ref{fig:Thm1-2} and  \ref{fig:Thm1-2-2}  below illustrate the estimation results. The left plot of Figure \ref{fig:Thm1-2} shows three ESDs, those of \ICV, $\cA_m$, and PAV. The three curves are clearly different from each: the difference between PAV and~$\cA_m$ is induced by  noise, while that between $\cA_m$ and \ICV\ is caused by high-dimensionality. Note that we only observe the ESD of PAV, whereas the ESDs of both \ICV\ and $\cA_m$ are underlying.
Our goal is to estimate the  ESD of ICV. As we explained below Theorem \ref{thm:LSD_signal_noise},  such a goal does not require estimating the ESD of $\cA_m$.
Here, we  still estimate this ESD to illustrate the application of Corollary \ref{cor:A_PAV}.
The estimation of the ESD of $\cA_m$ is conducted in the first step, and the result is shown in the right plot of Figure \ref{fig:Thm1-2}. The second step estimates the ESD of \ICV, with the result given in Figure \ref{fig:Thm1-2-2}.
\begin{figure}[H]\centering
\includegraphics[width=.43\textwidth]{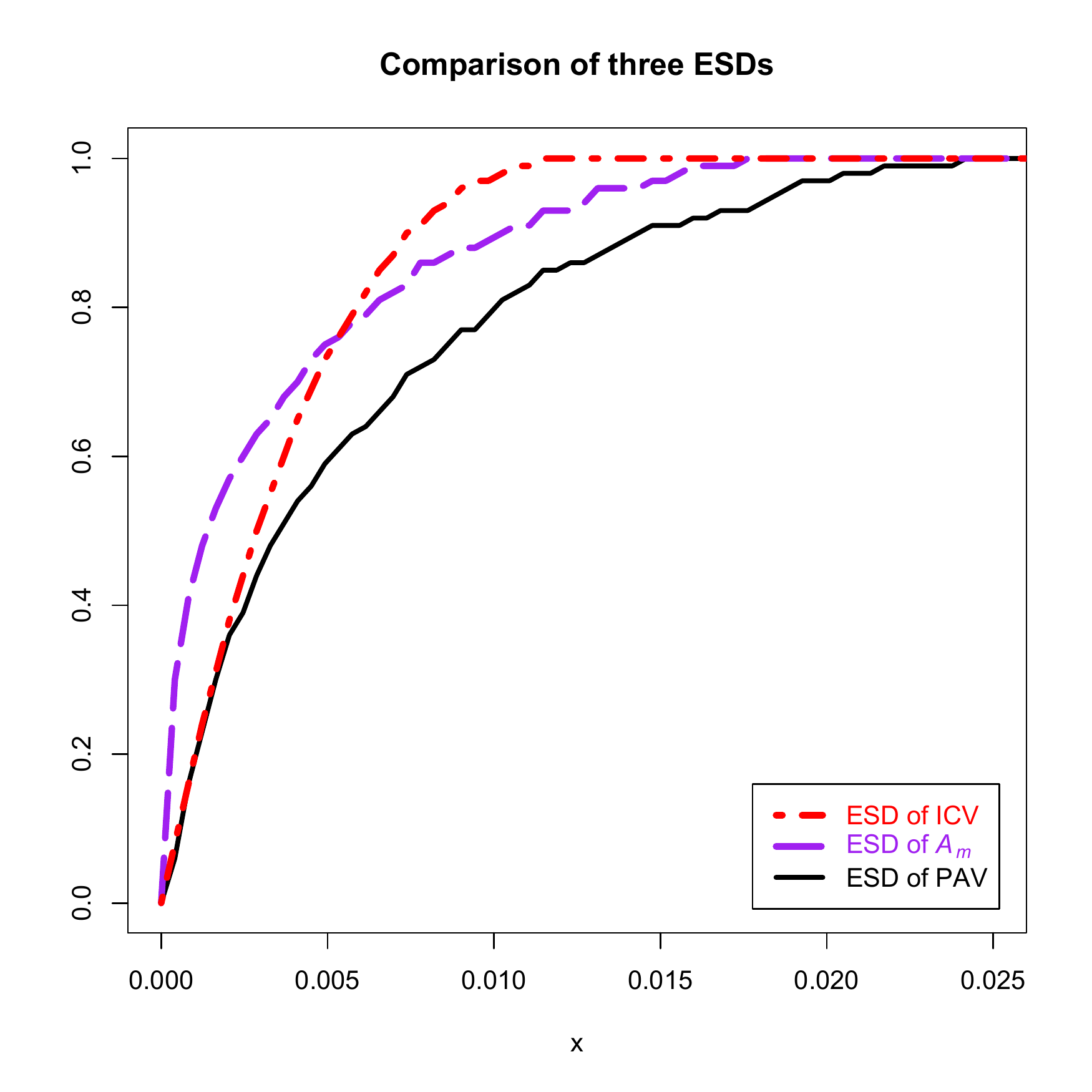}
\includegraphics[width=.43\textwidth]{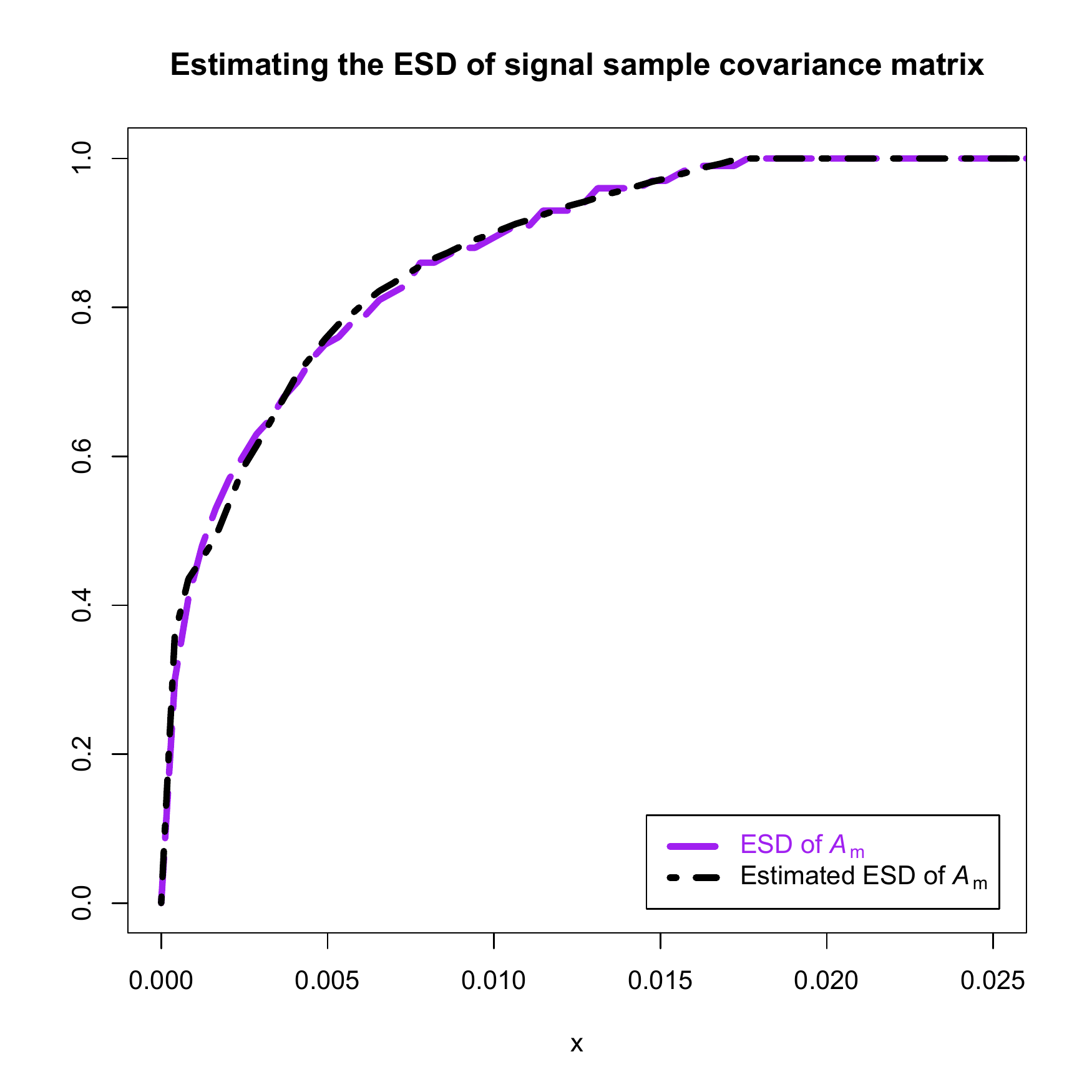}
\caption{Approach I -- Step 1: Estimation of the empirical spectral distribution of the signal sample covariance matrix $\cA_m$ based on synchronous noisy observations under model \eqref{sim_setup:syn}. The dimension  $p=100$, and the observation frequency $n=23400$. }
\label{fig:Thm1-2}
\end{figure}
\begin{figure}[H]\centering
\includegraphics[width=.43\textwidth]{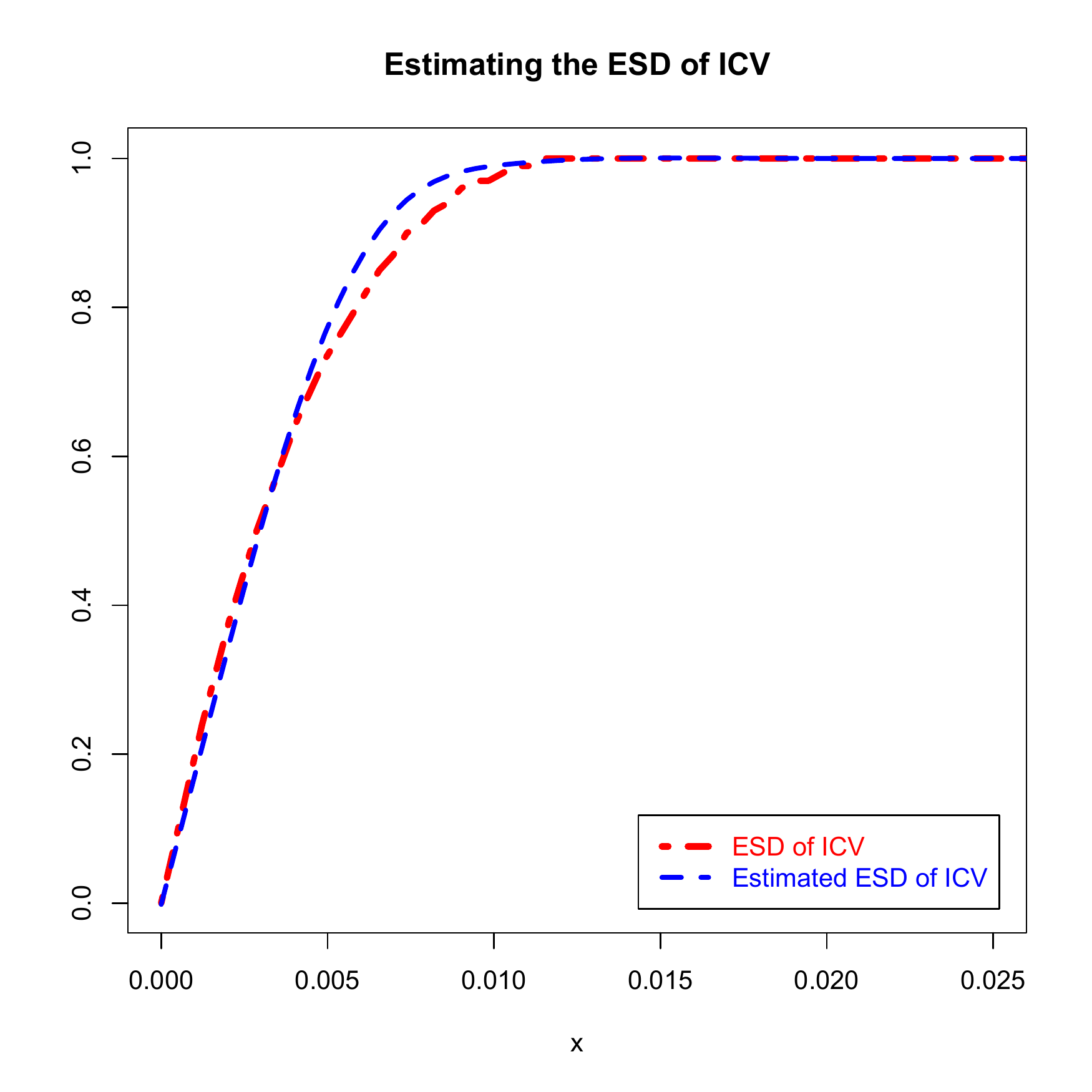}
\caption{Approach I -- Step 2: Estimation of the empirical spectral distribution of targeting ICV matrix.}
\label{fig:Thm1-2-2}
\end{figure}

Figures \ref{fig:Thm1-2} and \ref{fig:Thm1-2-2} show that the ESDs of both $\cA_m$ and \ICV\   can be estimated quite well.

\subsubsection{Estimation using Approach II: synchronous setting} \label{ssec:sim_App_II}

We now apply Approach II to estimate the ESD of \ICV. According to Theorem \ref{thm:B_n}, asymptotically, the ESD of  $\cB_m$ is related to that of ICV through the standard Mar\v{c}enko-Pastur equation. This allows us to directly apply existing algorithms that are developed to invert the Mar\v{c}enko-Pastur equation to estimate the ESD of \ICV, and in the below we adopt the algorithm proposed in \cite{ElKaroui08}.

Specifically, set the window length $k$ in defining $\cB_m$ to be $\lfloor1.5n^{0.6}\rfloor=627$. Discretize the ESD of ICV as \eqref{dH_ICV}.
According to Theorem \ref{thm:B_n}, the Stieltjes transform of the ESD of $\cB_m$, denoted by $m_{\cB_m}(z)$, should approximately satisfy equation \eqref{eqn:mp} with $H$ replaced with the ESD of ICV. In other words, we again expect the approximation errors
\[
e_j^{'''}:= m_{\cB_m}(z_j) -\sum_{k=1}^K \dfrac{c_k}{x_k(1-y(1+z_jm_{\cB_m}(z_j)))-z_j}
\]
to be small. Thus, again, we estimate the weights $c_k$'s by minimizing the approximation errors $e_j^{'''}$ as in \eqref{eq:est_wt_A}.

The estimation results are given in Figure \ref{fig:Thm3}. Again, we see from the left plot that the ESD of $\cB_m$ clearly differs from the (latent unobserved) ESD of  \ICV, yet the right plot shows that we can estimate this latent distribution well.

\begin{figure}[H]\centering
\includegraphics[width=.43\textwidth]{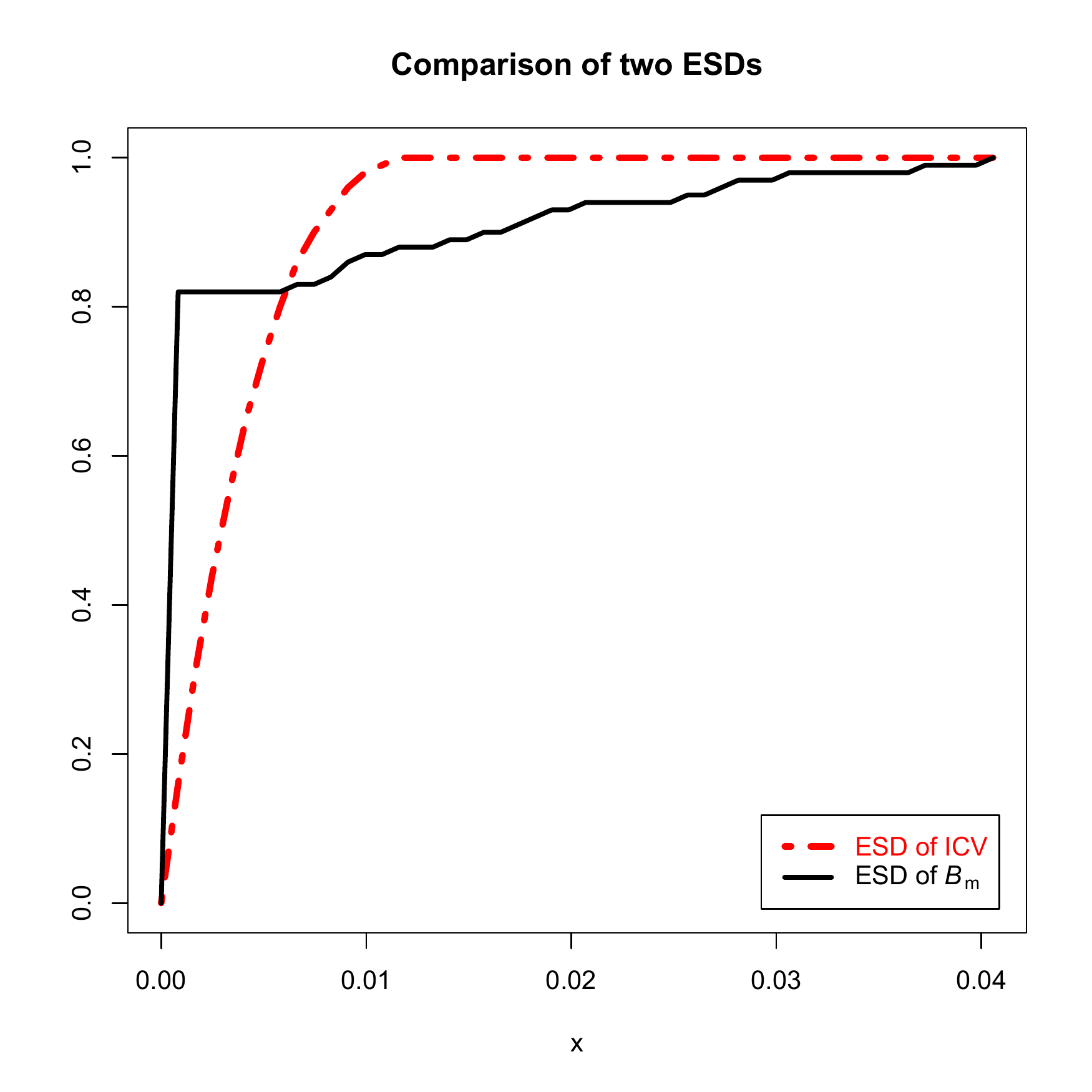}
\includegraphics[width=.43\textwidth]{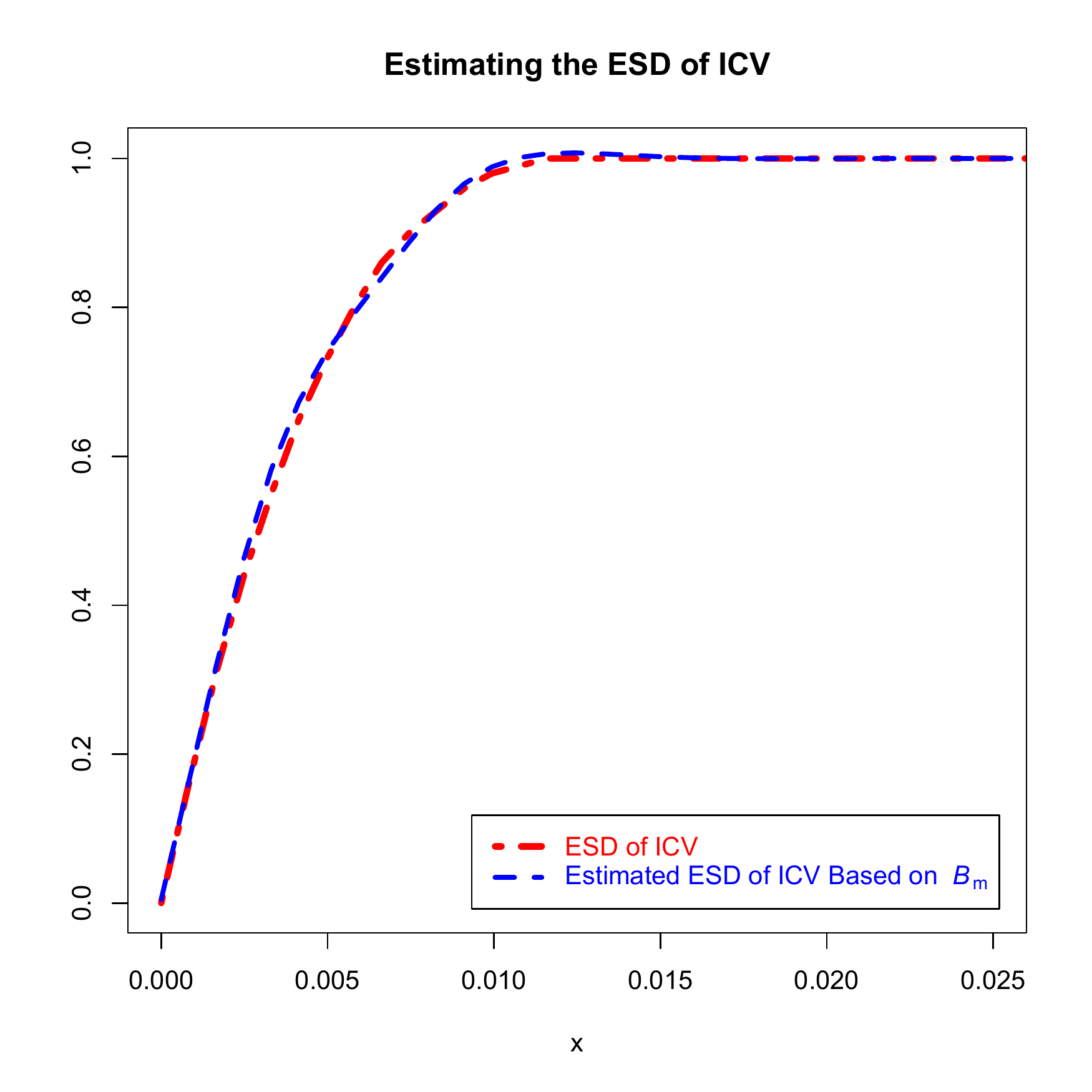}
\caption{Approach II: Estimation of the empirical spectral distribution of the targeting ICV matrix based on synchronous noisy observations under model \eqref{sim_setup:syn}. The dimension  $p=100$ and the observation frequency $n=23400$.}
\label{fig:Thm3}
\end{figure}

\subsection{When observations are asynchronous}\label{ssec:sim_asyn}
We now consider a setting where the observations are asynchronous. More specifically, for each stock $j=1,\ldots,p$, we simulate a Poisson process of rate
23,400 denoted by $\{\eta_i^j\}_{i=0,1,\ldots}.$ Because the Poisson processes $\{\eta_i^j\}_{i=0,1,\ldots}$ are to be generated independently, almost surely,
$\eta_{i_1}^{j_1} \neq \eta_{i_1}^{j_2}$ for all $1\leq j_1\neq j_2\leq p$ and $0<i_1, i_2$, namely, the observation times are all different for different stocks.
Figure \ref{fig:poisson} in the supplementary article  \cite{XZ15_supp} shows the observation times for three stocks generated in such a way during the first ten seconds.  The observation times are highly irregularly spaced:  there can be several seconds without a single observation, while there can also be several observations within a single second. Furthermore, because observation times for different stocks are generated independently, different stocks are observed in a rather unsynchronized manner, making the estimation of covariances difficult. For this reason, a synchronization procedure needs to be carried out before we apply either Approach I or II.

Before we discuss how to synchronize data, we first continue with the simulation design. To generate the latent process $(X_t)$, because of the asynchronicity and high-dimensionality (we are dealing with $p=100$ independent Poisson processes and, consequently, roughly $23,400 \times 100$ distinct observation times), there is a real technical difficulty in incorporating interactions among component processes in the data generating process. We adopt the following simplified setting to facilitate the simulation. Observe that the results in the previous subsection are achieved when the component processes have dependence, so we believe our mathods would still work when there is asynchronicity \emph{and} dependence. The simplified setting is as follows:
\begin{equation}\label{sim_setup:asyn}
d\X_t =\phi_t  \mathbf{D}^{1/2}\, d \W_t,
\end{equation}
where both $(\phi_t)$ and $\mathbf{D}$ are as in the previous subsection.

Our observations are
\[
   Y^j_{\eta_i^j} =  X^j_{\eta_i^j} + \eps^j_i,
\]
where $\eps^j_i$ are \hbox{i.i.d.} $N(0,0.0002)$.

Now we discuss how to synchronize data. We adopt the previous tick method explained in Section \ref{ssec:asyn}. More specifically, we choose an equally spaced grid $\{t_i\}\subseteq[0,1]$, and for each $t_i$, for each $j=1,\ldots,p$,  let
\[
\tau_i^j = \max\{\eta_k^j:  \eta_k^j \leq t_i\}.
\]
We then proceed as if we observe $Y^j_{\tau_i^j}$ at time $t_i$. As we explained in Section~\ref{ssec:asyn}, because $\tau_i^j \neq t_i$, such a synchronization procedure introduces an additional error $X^j_{\tau_i^j} - X^j_{t_i}$.

\subsubsection{Estimation using Approach I: asynchronous setting} \label{sssec:sim_asyn_I}
The additional error $X^j_{\tau_i^j} - X^j_{t_i}$ that the synchronization procedure induces depends on the latent process. For this reason, our independence assumption between the noise and the latent process (\ref{asm:noise}) is violated. To alleviate this problem, we synchronize less frequently so that the signals $X^j_{\tau_i^j}-X^j_{\tau_{i-1}^j}$ tend to be bigger and better approximate the true signals $X^j_{t_i}-X^j_{t_{i-1}}$. More specifically, we choose the equally spaced grid to be
$\{t_i=4i/23,400 \}$ -- in other words, we synchronize  once  every four seconds. Then, following the  estimation procedure in Section~\ref{ssec:sim_App_I}, we have the following results.

\begin{figure}[H]\centering
\includegraphics[width=.43\textwidth]{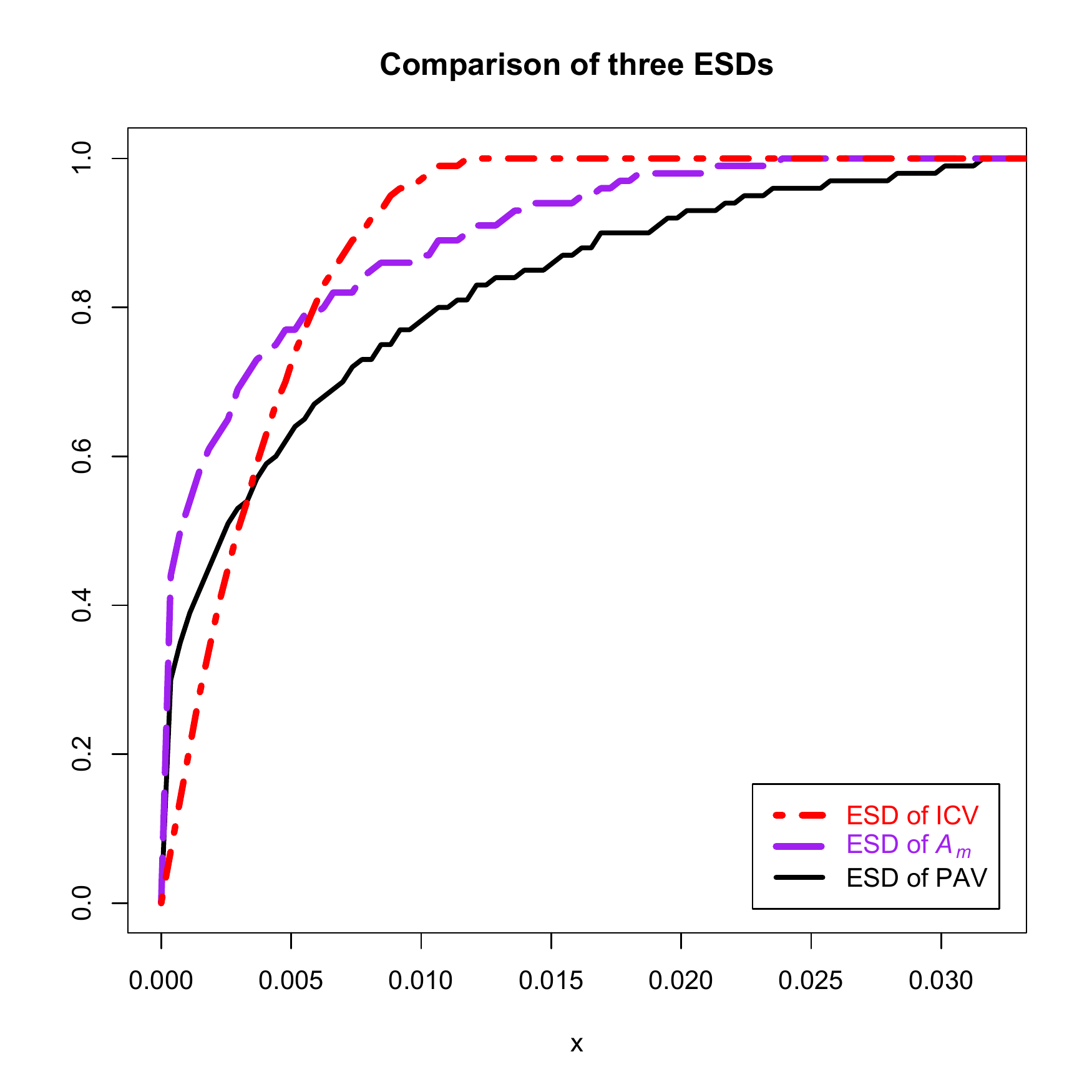}
\includegraphics[width=.43\textwidth]{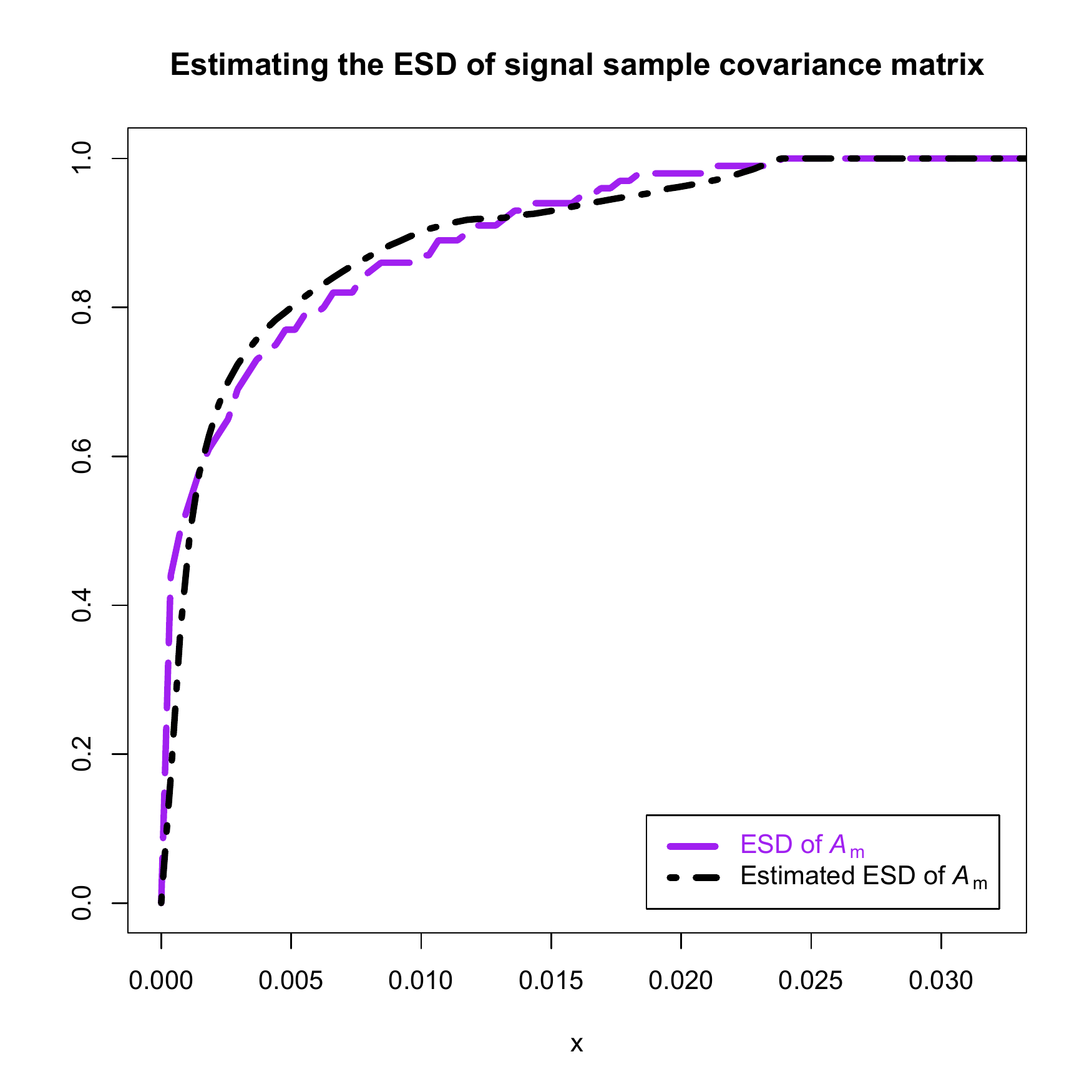}
\caption{Approach I -- Step 1: Estimation of the empirical spectral distribution of the signal sample covariance matrix $\cA_m$ based on asynchronous noisy observations under model~\eqref{sim_setup:asyn}. The dimension is $p=100$. The synchronization frequency is 4 seconds, which leads to $n=23400/4=5850$ observations. }
\label{fig:Thm1-2-Asyn}
\end{figure}
\begin{figure}[H]\centering
\includegraphics[width=.43\textwidth]{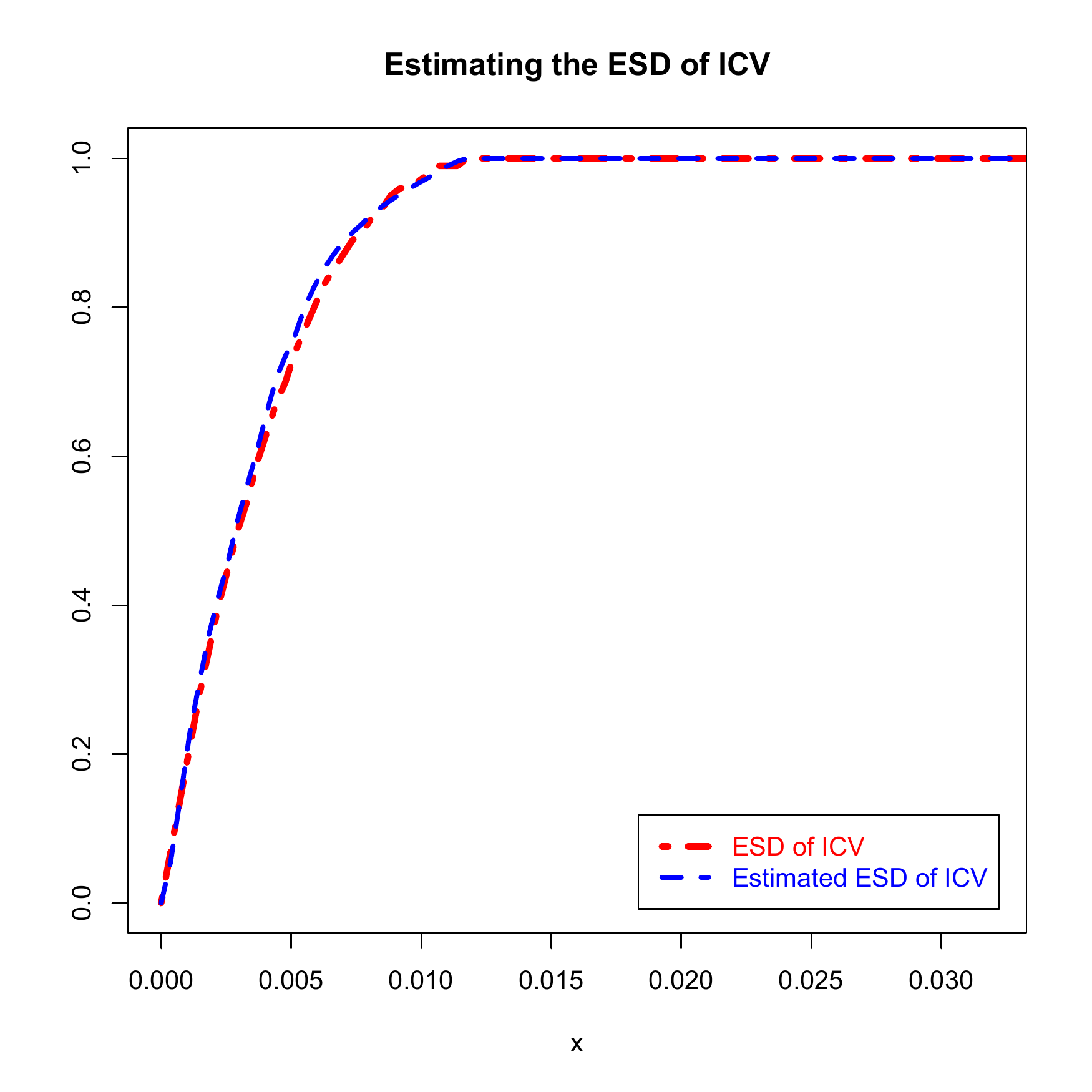}
\caption{Approach I -- Step 2: Estimation of the empirical spectral distribution of the targeting ICV matrix.}
\label{fig:Thm1-2-2-Asyn}
\end{figure}

We see that in such a highly asynchronous noisy observation setting, Approach~I still works quite well.

\subsubsection{Estimation using Approach II: asynchronous setting} \label{sssec:sim_asyn_II}
Approach II relies on Theorem \ref{thm:B_n}, which allows for dependence in the noise process and between the  noise and  price process. For this reason, it is more robust than Approach I, and we can synchronize more frequently. In the estimation below
we choose to synchronize once every second; that is, the time grid is taken to be $\{t_i=i/23400\}$.
Then,  following the  estimation procedure in Section \ref{ssec:sim_App_II}, we obtain the following results.

\begin{figure}[H]\centering
\includegraphics[width=.43\textwidth]{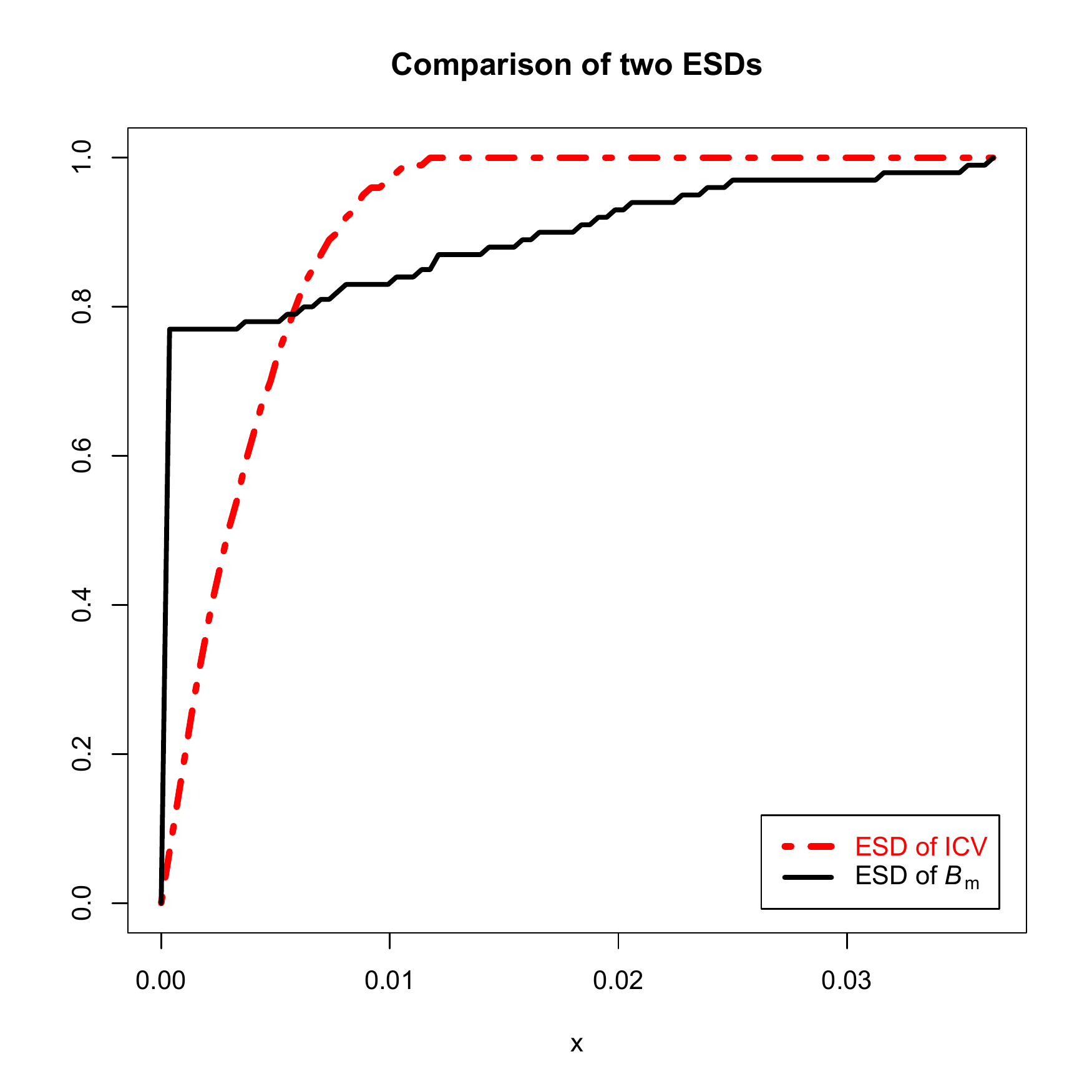}
\includegraphics[width=.43\textwidth]{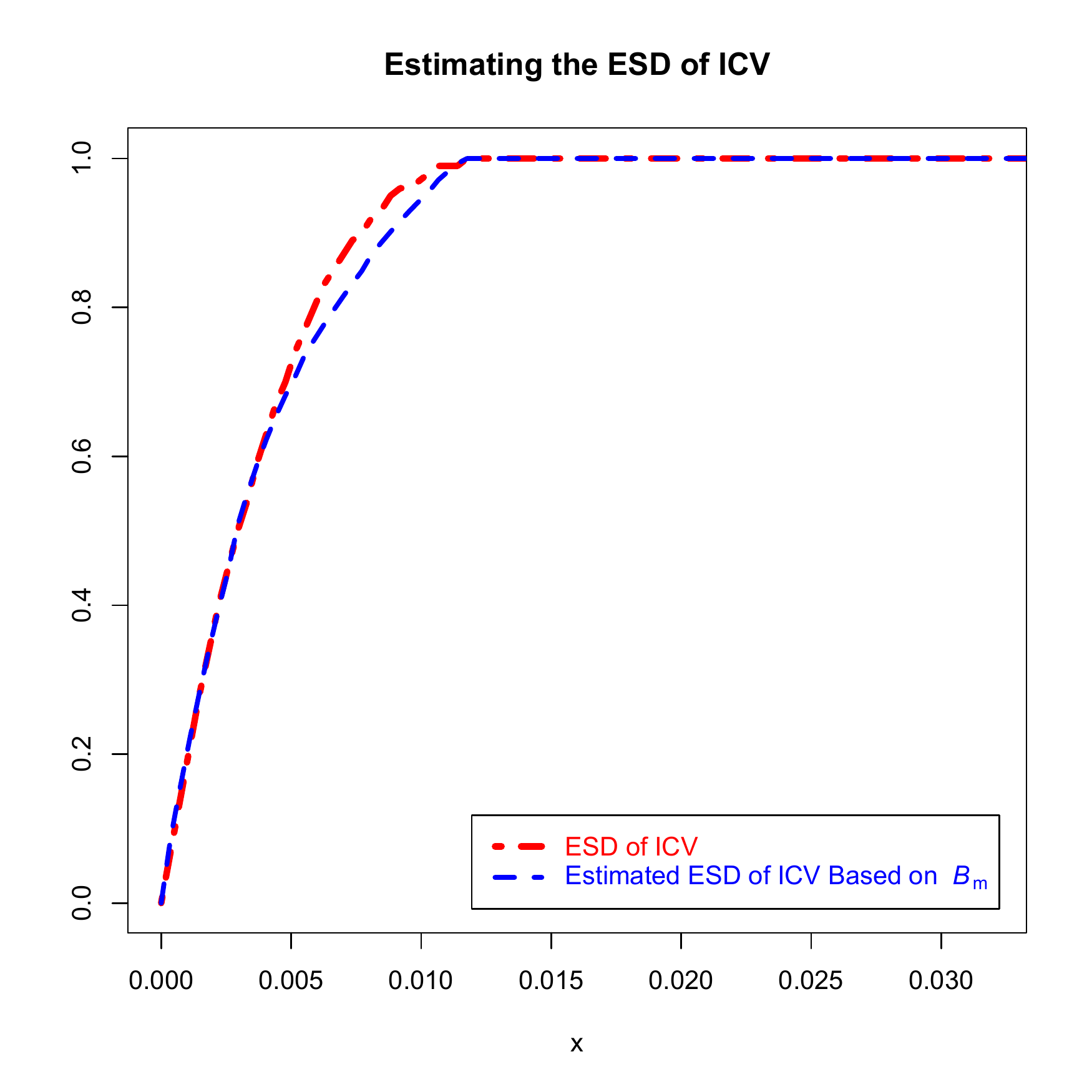}
\caption{Approach II: Estimation of the empirical spectral distribution of the targeting ICV matrix based on asynchronous noisy observations under model \eqref{sim_setup:asyn}. The dimension is $p=100$. The synchronization frequency is  one second, which leads to $n=23,400$ observations. }
\label{fig:Thm3-Asyn}
\end{figure}

Again, we see that in such an asynchronous noisy observation setting, Approach~II works quite well.

\subsection{Discussions about the two approaches}
The two approaches have their own pros and cons.
\begin{itemize}
\item Approach I is more widely applicable to noisy observation situations. Moreover, in our particular application, because the window width in defining the \PAV\ matrix is $O(\sqrt{n})$, which is of lower order than that for $\cB_m$ in Approach II, Approach I essentially has a larger ``effective sample size.'' This approach, however, is more sensitive to the model assumptions. In particular, in the asynchronous observation setting, because of the additional error introduced by asynchronicity, we may need to synchronize less frequently.
\item Approach II is more direct because it only involves a one-step estimation procedure. It is also more robust because it allows for rather general dependence structures in the noise process, both cross-sectional and temporal, and even dependence between the  noise and  price process.  For this reason, in the asynchronous setting in Section \ref{ssec:sim_asyn}, we can use a higher synchronization frequency than for Approach I.
A major drawback of Approach II is that it relies heavily on some special properties of the particular setting under study and hence may not be applicable to other noisy observation situations.
\end{itemize}

Finally, while in the estimation above we largely adapt the algorithms proposed by \cite{ElKaroui08} to fit our setting,
other algorithms such as those in \cite{Mestre08}, \cite{BCY10} and \cite{LW2015}  can  also be adapted.


\section{Conclusion\label{sec:conclusion} }
Motivated by the inference about the spectral distribution of the \ICV\  matrix based on high-frequency noisy data,
\begin{compactitem}
\item we establish an asymptotic relationship that describes how the spectral distribution of the signal sample  covariance matrices depends on that of the sample covariance matrices constructed from noisy observations;
\item using further a (generalized) connection between the spectral distribution of the signal sample covariance matrices and that of the population covariance matrix, we propose a two-step procedure that can consistently estimate the spectral distribution of ICV for a class  of diffusion processes;
\item  we further develop an alternative approach  that possesses several desirable properties: it is more robust, it eliminates the effects of microstructure noise, and the asymptotic relationship that enables the consistent estimation of the  spectral distribution of ICV  is the standard Mar\v{c}enko-Pastur equation.
 \item numerical studies demonstrate that our proposed methods work well, under both synchronous and asynchronous observation settings.
\end{compactitem}


\section*{Acknowledgements}
We are very grateful to the editor, the associate editor, and the anonymous referees for their  valuable comments and constructive suggestions, which led to a substantial improvement of this paper.

\bigskip

\noindent
Ningning Xia: School of Statistics and Management, Shanghai, Key Laboratory of Financial Information Technology, Shanghai University of Finance and Economics,
777 Guo Ding Road, China, 200433.  xia.ningning@mail.shufe.edu.cn

\bigskip

\noindent
Xinghua Zheng: Department of Information Systems Business Statistics and Operations
Management, Hong Kong University of Science and Technology, Clear Water Bay, Kowloon,
Hong Kong. xhzheng@ust.hk



\newpage
\setcounter{page}{1}
\begin{supplement}[id=suppA]
  \stitle{Supplement to ``On the inference about the spectral distribution of high-dimensional covariance matrix based on high-frequency noisy observations
''}

The proposition/lemma/equation \hbox{etc.} numbers below refer to the main article \cite{XZ15}.
\appendix

\renewcommand{\baselinestretch}{1.2}
\setcounter{equation}{0}
\renewcommand{\theequation}{\thesection.\arabic{equation}}

\section{Proof of Theorem~\ref{THM1}}\label{appendix:pf_thm_LSD_signal_noise}

Theorem \ref{thm:LSD_signal_noise} is a consequence of the following proposition.

\begin{prop}\label{prop:LSD_signal_noise}
Under the assumptions of Theorem \ref{thm:LSD_signal_noise}, there exists  $K^*>0$ such that almost surely, for all
\[
z\in\mathbb{C}^{*}:=\left\{z\in\mathbb{C}^+: \Im(z)> K^*\right\},
\]
we have
\begin{equation}\label{eqn:prop_LSD_signal_noise}
\lim_{p\to\infty}\left[
\dfrac{1}{p}{\rm tr}\left(\dfrac{1}{1+\delta_n}\cA_n-z\I\right)^{-1}
-\dfrac{1}{p}{\rm tr}\bigg({\bf S}_n-(z-t_n\sigma_n^2)\I\bigg)^{-1}
\right]=0,
\end{equation}
where for all sufficiently large $p$,  $t_n$ is the unique solution to the equation
\begin{equation}\label{eqn:t_n}
t_n \ = \ y_n-1+y_n(z-t_n\si_n^2) \ \dfrac{1}{p}{\rm tr}\(\S_n-(z-t_n\si_n^2)\I \)^{-1},
\end{equation}
in the set
\begin{equation}\label{dfn:D}
\sD:=\left\{t\in\bC: 0\le\Im(t)\le \dfrac{\Im(z)}{2(\si+1)^2}\right\},
\end{equation}
and
\begin{equation}
\de_n \ = \ y_n\si_n^2 \ \dfrac{1}{p}{\rm tr}\(\S_n-(z-t_n\si_n^2) \I \)^{-1}.
\label{delta}
\end{equation}
\end{prop}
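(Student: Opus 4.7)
The proposition has two ingredients: (i) existence and uniqueness of $t_n$ in the domain $\sD$, and (ii) the asymptotic trace identity \eqref{eqn:prop_LSD_signal_noise}. I would treat them in this order.

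For (i), the natural strategy is a contraction mapping argument. Define $\Phi(t) := y_n - 1 + y_n(z - t\sigma_n^2)\,\frac{1}{p}\tr(\S_n - (z - t\sigma_n^2)\I)^{-1}$ on $\sD$. Two properties are needed: $\Phi(\sD) \subseteq \sD$ and $\Phi$ is a strict contraction. The first follows because for $t \in \sD$ the shifted argument $z - t\sigma_n^2$ still has imaginary part at least $\Im(z)/2$, so standard Stieltjes transform bounds yield $|m_{\S_n}(z - t\sigma_n^2)| \le 2/\Im(z)$, and one can then verify $|\Im \Phi(t)| \le \Im(z)/(2(\sigma+1)^2)$ once $\Im(z) > K^*$ for a $K^*$ depending only on $y$ and $\sigma$. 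The contraction property reduces to a bound on $|\partial_t \Phi|$, which involves $|\partial_w m_{\S_n}(w)| = O(1/\Im(w)^2)$ and is again made small by taking $\Im(z)$ large. Banach's theorem then delivers a unique $t_n \in \sD$.

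For (ii), the core idea is resolvent comparison. Write $R_\S := (\S_n - w\I)^{-1}$ with $w = z - t_n\sigma_n^2$ and $R_\cA := (\cA_n/(1+\delta_n) - z\I)^{-1}$. The resolvent identity gives
\begin{equation*}
R_\S - R_\cA \ = \ R_\cA\bigl(\cA_n/(1+\delta_n) - \S_n + t_n\sigma_n^2\I\bigr)R_\S.
\end{equation*}
Since $\S_n - \cA_n = \sigma_n(\A_n\bE_n^T + \bE_n\A_n^T)/n + \sigma_n^2 \bE_n\bE_n^T/n$, the bracket decomposes into a ``signal shrinkage'' piece $-\delta_n\cA_n/(1+\delta_n)$, a ``cross'' piece $-\sigma_n(\A_n\bE_n^T + \bE_n\A_n^T)/n$, and a ``pure noise'' piece $t_n\sigma_n^2\I - \sigma_n^2\bE_n\bE_n^T/n$. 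The cross piece has zero conditional mean given $\A_n$ and is controlled by Hanson-Wright type bounds on linear and quadratic forms in the i.i.d.\ entries of $\bE_n$. For the pure noise piece, expand $\bE_n\bE_n^T/n = \sum_j \bE_{\cdot j}\bE_{\cdot j}^T/n$ and apply Sherman-Morrison column by column: each quadratic form $\sigma_n^2 \bE_{\cdot j}^T R_\S^{(j)} \bE_{\cdot j}/n$ (with $R_\S^{(j)}$ the leave-one-out resolvent) concentrates around $\sigma_n^2 \tr R_\S^{(j)}/n$, which is essentially $\delta_n$ by the definition \eqref{delta}. The residual $t_n\sigma_n^2$ is then precisely what the fixed-point equation \eqref{eqn:t_n} was engineered to absorb, so the leading contributions cancel and only $o(1)$ remains.

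The main obstacle will be controlling the stochastic fluctuations uniformly enough to upgrade convergence in probability to almost sure convergence. I would use a martingale decomposition of $\frac{1}{p}\tr(R_\S - R_\cA)$ with respect to the filtration generated by the columns of $\bE_n$, bound the martingale differences via the rank-one perturbation identity, and then combine Burkholder's inequality with Borel-Cantelli. A secondary technical point is the replacement of $R_\S^{(j)}$ by $R_\S$ uniformly in $j$, which again follows from a rank-one perturbation bound of the form $\bigl|\tr(R_\S - R_\S^{(j)})\bigr| \le 1/\Im(w)$. Finally, because $\A_n$ is only assumed to have a converging ESD rather than a bounded spectral norm, the whole argument should be carried out on the almost sure event where $\|\S_n\|$ and the quantities $\frac{1}{p}\tr R_\S$ remain well-controlled; the complement is dealt with by a standard truncation of the entries of $\bE_n$, as is classical in this line of work.
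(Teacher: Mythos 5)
Your plan follows the paper's proof almost step for step: a Banach fixed-point argument on $\sD$ for existence and uniqueness of $t_n$, a resolvent comparison between $\bigl(\cA_n/(1+\delta_n)-z\I\bigr)^{-1}$ and $\bigl(\S_n-(z-t_n\sigma_n^2)\I\bigr)^{-1}$, the identical three-way decomposition (signal shrinkage, cross terms, pure noise) handled column-by-column via Sherman--Morrison and a leave-one-out resolvent, and the observation that $t_n$ was engineered as $t_n=-\frac1n\sum_j\beta_j$ to absorb the pure-noise contribution. The only real variation is at the very end: where you propose a martingale decomposition plus Burkholder to upgrade to almost-sure convergence, the paper instead establishes uniform fourth-moment bounds on the building blocks $\eta_j,\gamma_j,w_j,\hat w_j$ (their Lemma A.5), applies Markov with fourth powers to get probability bounds of order $(\log n)^8/n^2$, and invokes Borel--Cantelli directly --- a somewhat lighter-weight route to the same end. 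One small caution on your step (i): the self-map property $\Phi(\sD)\subseteq\sD$ does not follow from $|m_{\S_n}(z-t\sigma_n^2)|\le 2/\Im(z)$ alone; the paper's argument really uses that $F^{\S_n}$ has uniformly bounded support (via the no-eigenvalues-outside-the-support result for information-plus-noise matrices), so that $\Im\Phi(t)=y_n\int\frac{x\,\Im(z-t\sigma_n^2)}{|x-z+t\sigma_n^2|^2}dF^{\S_n}(x)\le \tilde y\tilde b/\Im(z-t\sigma_n^2)$, and this is where the hypothesis that $F$ is supported on a finite interval enters the Proposition.
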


In Section \ref{ssec:pf_thm_LSD_signal_noise} below, we show how Proposition \ref{prop:LSD_signal_noise} leads to Theorem \ref{thm:LSD_signal_noise}.

As to the proof of Proposition \ref{prop:LSD_signal_noise}, we shall use the following results from  \cite{DS2007a}.
By Theorem 1.1 therein, the sequence $\{F^{\S_n}\}$ converges weakly to a probability distribution  $F$.  Moreover, by using the same truncation and centralization technique as in \cite{DS2007a}, we may assume that
\begin{compactenum}\setcounter{enumi}{5}
 \item[]
 \begin{compactenum}
  \item\label{asm:eps_bd} $|\ep_{11}|\le a\log(n)$ for some $a>2$,
  \item\label{asm:eps_mean_var} $E\ep_{11}=0$, $E|\ep_{11}|^2=1$, \q and
  \item\label{asm:A_bd}  $\|(1/n)\A_n\A_n^T\|\le \log(n)$.\\
\end{compactenum}
\end{compactenum}

In addition to equation \eqref{eqn:t_n}, we shall also study its limiting equation
\begin{equation}\label{eqn:t}
t= \ y-1+y(z-t\sigma^2)m(z-t\sigma^2),
\end{equation}
where $m(\cdot)$ is  the Stieltjes transform of  the probability distribution  $F$.

It is shown in \cite{DS2007b} that the distribution $F$ admits a continuous density on $\zz{R}\setminus\{0\}$.  Because we assume that
$F$ is supported by a finite interval $[a,b]$ with $a>0$ and possibly has a point mass at~0, we conclude that $F$ admits a bounded density~$f$ supported by  $[a,b]$ and possibly a point mass at zero.

\subsection{Properties of $t_n$ and $t$}\label{ssec:t_n}

\begin{lem}\label{lem:tn_exist}
There exists  $K_1>0$ such that for all
$z\in\bC_1:=\left\{z=u+iv:  v> K_1 \right\}$,
for all sufficiently large $n$, equation (\ref{eqn:t_n}) admits a unique solution in $\sD$.
\end{lem}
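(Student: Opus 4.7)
The strategy is to recast equation \eqref{eqn:t_n} as a fixed-point problem and apply the Banach fixed-point theorem on the closed strip $\sD$. Specifically, define
\[
T_n(t) \ := \ y_n - 1 + y_n(z - t\si_n^2) \cdot \dfrac{1}{p}\tr\bigl(\S_n - (z - t\si_n^2)\I\bigr)^{-1},
\]
so that equation \eqref{eqn:t_n} is exactly $T_n(t_n) = t_n$, and write $m_n(w) := p^{-1}\tr(\S_n - w\I)^{-1}$ for the Stieltjes transform of $F^{\S_n}$. I would choose $K_1$ so large that, for all $z$ with $\Im(z) > K_1$ and all sufficiently large $n$, (i) $T_n$ maps $\sD$ into itself and (ii) $T_n$ is a strict contraction on $\sD$. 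Both conclusions of the lemma then follow from the Banach fixed-point theorem applied to the complete metric space $\sD$.

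For (i), for any $t \in \sD$ and $n$ large enough that $\si_n \le \si + 1$, one has $\Im(z - t\si_n^2) \ge \Im(z)\bigl(1 - \si_n^2/(2(\si+1)^2)\bigr) \ge \Im(z)/2 > 0$, so $w := z - t\si_n^2 \in \bC^+$ and the resolvent is well-defined since the eigenvalues of $\S_n$ are real. The identity $\Im(w m_n(w)) = \Im(w) \int \lambda/|\lambda - w|^2\, dF^{\S_n}(\lambda)$ gives $\Im(T_n(t)) \ge 0$ immediately. For the upper bound, I would estimate the integral by $\Im(w)^{-2} \cdot p^{-1}\tr(\S_n)$ and invoke a uniform-in-$n$ almost-sure bound $p^{-1}\tr(\S_n) \le C$ (discussed below) to obtain $\Im(T_n(t)) \le 4 y_n C / \Im(z)$, which is at most $\Im(z)/(2(\si+1)^2)$ provided $K_1$ is large.

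For (ii), set $g(w) := w m_n(w) = -1 + \int \lambda/(\lambda - w)\, dF^{\S_n}(\lambda)$, so that $g'(w) = \int \lambda/(\lambda - w)^2\, dF^{\S_n}(\lambda)$ satisfies $|g'(w)| \le p^{-1}\tr(\S_n)/\Im(w)^2 \le 4C/\Im(z)^2$ for every $w$ of the form $z - t\si_n^2$ with $t \in \sD$. Since $\sD$ is convex, integrating $g'$ along the segment joining $z - t_1\si_n^2$ and $z - t_2\si_n^2$ gives
\[
|T_n(t_1) - T_n(t_2)| \ = \ y_n\, \bigl|g(z - t_1\si_n^2) - g(z - t_2\si_n^2)\bigr| \ \le \ \dfrac{4 y_n \si_n^2 C}{\Im(z)^2}\, |t_1 - t_2|,
\]
which is a contraction once $K_1$ is enlarged to make $4 y \si^2 C/K_1^2 < 1$.

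The principal obstacle is establishing the uniform-in-$n$ almost-sure bound $p^{-1}\tr(\S_n) \le C$ used throughout. Weak convergence $F^{\S_n} \Rightarrow F$ to a compactly supported $F$ does not by itself yield moment convergence, but under the truncation and centralization assumptions \eqref{asm:eps_bd}--\eqref{asm:A_bd} inherited from \cite{DS2007a}, the standard largest-eigenvalue estimate for information-plus-noise matrices (together with $\|\cA_n\| \le \log n$ and $\|n^{-1}\bE_n\bE_n^T\|$ being almost surely bounded) shows that $\|\S_n\|$ is almost surely bounded by a deterministic constant for large $n$; the bound on $p^{-1}\tr(\S_n)$ follows. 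Once this is in hand, everything else is routine bookkeeping.
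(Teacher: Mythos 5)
Your proof follows essentially the same route as the paper's: rewrite \eqref{eqn:t_n} as a fixed-point equation for the map $t \mapsto T_n(t)$ and verify the self-mapping and contraction hypotheses of Banach's theorem on $\sD$, with both hypotheses reducing to a uniform bound on the spectrum of $\S_n$; the paper gets this bound by invoking Theorem~1.1 of Bai and Silverstein (2012) (the ``no eigenvalues outside the support'' theorem for information-plus-noise matrices) to conclude $b_n \le b+1$ for all large $n$, which is precisely the ``standard largest-eigenvalue estimate'' you appeal to, so the two arguments are the same up to bookkeeping (your derivative bound on $g(w)=w m_n(w)$ plays the role of the paper's Cauchy--Schwarz estimate on $q(t,t')$). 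One small caveat worth fixing: the parenthetical in your last paragraph reads as if combining $\|\cA_n\| \le \log n$ with the a.s.\ boundedness of $\|n^{-1}\bE_n\bE_n^T\|$ directly yields a uniform bound on $\|\S_n\|$, but a naive triangle-inequality combination would only give $\|\S_n\| = O(\log n)$ (which would make your contraction constant blow up); it is the no-eigenvalues-outside-the-support theorem itself, not a norm estimate on the summands, that supplies the needed deterministic constant.
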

\begin{proof}
Rewrite equation (\ref{eqn:t_n}) as
\begin{equation}\label{re_t}
\aligned
t_n+1 &=
y_n+y_n\int \ \dfrac{z-t_n\sigma_n^2}{x-z+t_n\sigma_n^2} \ dF^{S_n}(x) \\
&= \ y_n\int \ \dfrac{x}{x-z+t_n\sigma_n^2} \ dF^{S_n}(x).
\endaligned
\end{equation}

First, under the assumptions of Theorem \ref{thm:LSD_signal_noise}, by Theorem 1.1 in \cite{Bai2012_noise}, if we let $[a_n,b_n]$ be an interval containing the support of~$F^{\S_n}$, then we may assume that  for all large $n$, $b_n\leq \wt{b}:=b+1$.
Let $\wt{\si}=\si+1$, $\wt{y}=y+1$  and $K_1=2\wt{\si}\sqrt{\wt{y}\wt{b}}$. Because $\si_n\to\si$ and $y_n\to y$, we have for all large $n$ and for all $t=t_1 + it_2\in\sD$,
\begin{equation}\label{eqn:v_t_lower_bd}
\si_n< \wt{\si}, ~ y_n< \wt{y},~~~ {\rm and}~v-t_2\si_n^2\ge v-t_2\wt{\si}^2\geq v/2>0.
\end{equation}

Define
\[
G(t) \ = \ y_n \int \ \dfrac{x}{x-z+t\sigma_n^2} \ dF^{S_n}(x)-1, ~~~~~ {\rm for ~all}~~ t\in \mathscr{D}.
\]
We apply the Banach fixed point theorem to show that  for all sufficiently large $n$, there exists a unique point $t^*\in \mathscr{D}$ such that $G(t^*)=t^*$. The desired conclusion then follows.

\ul{Step (i)}: we prove that the mapping $G$ is defined from $\mathscr{D}$ to $\mathscr{D}$. From the definition of $G(t)$ and that of $t\in\sD$,   we have
\begin{eqnarray*}
\Im(G(t))&=&y_n\int_{a_n}^{b_n}\frac{x(v-t_2\sigma_n^2)}{(x-u+t_1\sigma_n^2)^2+(v-t_2\sigma_n^2)^2} \ dF^{S_n}(x)\\
&=&\dfrac{y_n}{v-t_2\sigma_n^2}\int_{a_n}^{b_n}\dfrac{x}{1+\left(\frac{x-u+t_1\sigma_n^2}{v-t_2\sigma_n^2}\right)^2} \ dF^{S_n}(x),
\end{eqnarray*}
and hence for all sufficiently large $n$, by \eqref{eqn:v_t_lower_bd}, we have
\begin{eqnarray*}
0<\Im(G(t))
< \dfrac{\wt{y}\wt{b}}{v-t_2\wt{\si}^2}
\le \dfrac{v}{2\wt{\si}^2},
\end{eqnarray*}
where the last inequality follows from the fact that for any  $z\in\bC_1$,
\[
\dfrac{\wt{y}\wt{b}}{v-t_2\wt{\si}^2}-\dfrac{v}{2\wt{\si}^2}
\le \dfrac{2\wt{y}\wt{b}}{v}-\dfrac{v}{2\wt{\si}^2}
= \dfrac{4\wt{\si}^2\wt{y}\wt{b}-v^2}{2\wt{\si}^2v}
\le 0.
\]

\ul{Step (ii)}: we shall show that $G: \mathscr{D} \to \mathscr{D}$ is a contraction mapping. In fact, for any two points $t$, $t^{\prime} \ \in \mathscr{D}$,
\begin{eqnarray*}
G(t)-G(t^{\prime})&=& y_n\int_{a_n}^{b_n} \
\left(\dfrac{x}{x-z+t\sigma_n^2}-\dfrac{x}{x-z+t^{\prime}\sigma_n^2}\right)
\ dF^{S_n}(x)\\
&=& (t-t^{\prime}) \ y_n\sigma_n^2 \int_{a_n}^{b_n} \
\dfrac{-x}{(x-z+t\sigma_n^2)(x-z+t^{\prime}\sigma_n^2)}
\ dF^{S_n}(x)\\
&:=& (t-t^{\prime}) \ q(t,t^{\prime}).
\end{eqnarray*}
By the Cauchy-Schwartz inequality and \eqref{eqn:v_t_lower_bd} we get that  for all sufficiently large $n$, for all $t,t^{\prime}\in\mathscr{D}$,
\[
\aligned
|q(t,t^{\prime})|
\le& \left(\int_{a_n}^{b_n} \ \dfrac{\sigma_n^2 y_n x}{|x-z+t\sigma_n^2|^2} \ dF^{S_n}(x)\right)^{1/2}
\left(\int_{a_n}^{b_n} \ \dfrac{\sigma_n^2 y_n x}{|x-z+t^{\prime}\sigma_n^2|^2} \ dF^{S_n}(x)\right)^{1/2}\\
\le& \left(\dfrac{\sigma_n^2y_nb_n}{(v-\Im(t)\sigma_n^2)^2}\right)^{1/2}
\left(\dfrac{\sigma_n^2y_nb_n}{(v-\Im(t^{\prime})\sigma_n^2)^2}\right)^{1/2}\\
<&\left(\dfrac{\wt{\si}^2\wt{y}\wt{b}}{(v-\Im(t)\wt{\si}^2)^2} \right)^{1/2} \left(\dfrac{\wt{\si}^2\wt{y}\wt{b}}{(v-\Im(t^{\prime})\wt{\si}^2)^2} \right)^{1/2}\\
\le&\left(\dfrac{\wt{\si}^2\wt{y}\wt{b}}{v^2/4} \right)^{1/2} \left(\dfrac{\wt{\si}^2\wt{y}\wt{b}}{v^2/4} \right)^{1/2},
\endaligned
\]
which is strictly smaller than 1 when $z\in\bC_1$.
Therefore, the mapping $G$ is contractive  in~$\mathscr{D}$, and the Banach fixed point theorem guarantees the existence of a unique solution to equation (\ref{eqn:t_n}).
\end{proof}

\begin{lem}\label{lem:t_properties}
Suppose that $t$ solves equation \eqref{eqn:t} for $z\in\bC^+$. Write $t=t_1+it_2$ and $z=u+iv$.
Then, $0<t_2<v/\si^2$; moreover,
as $v\to\infty$, uniformly in $u$, one has $t_2\to 0$ and $t_1\to -1$.
\end{lem}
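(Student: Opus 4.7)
The plan is to read off all three conclusions directly from the identity obtained by taking real and imaginary parts of the defining equation \eqref{eqn:t}. Set $w := z - t\sigma^2$, so $w = r + is$ with $r = u - t_1\sigma^2$ and $s = v - t_2\sigma^2$. Since $m(\cdot)$ is only defined on $\bC^+$, the very existence of the solution $t$ forces $s > 0$, giving the upper bound $t_2 < v/\sigma^2$ immediately. For the strict positivity of $t_2$, I would use the standard identity $wm(w) = -1 + \int \lambda/(\lambda-w)\,dF(\lambda)$ and take imaginary parts to get
\[
t_2 \;=\; y\,\Im(wm(w)) \;=\; y\int \frac{\lambda\, s}{(\lambda-r)^2+s^2}\,dF(\lambda),
\]
which is strictly positive because $F$ has nontrivial mass on $[a,b]\subset (0,\infty)$.

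The key (and only nontrivial) step is to show that $s\to\infty$ uniformly in $u$ as $v\to\infty$. Here I would exploit the structural fact, quoted from \cite{DS2007b} just above the lemma, that $F$ has a bounded density $f$ on $[a,b]$ and possibly a point mass at $0$. The point mass at $0$ contributes nothing to the integral above because of the factor $\lambda$, so
\[
t_2 \;=\; y\int_a^b \frac{\lambda\, s\, f(\lambda)}{(\lambda-r)^2+s^2}\,d\lambda
\;\le\; y\,b\,\|f\|_\infty \int_a^b \frac{s}{(\lambda-r)^2+s^2}\,d\lambda
\;\le\; y\,b\,\|f\|_\infty\,\pi,
\]
since the Cauchy (Poisson) kernel integrates to $\pi$ over all of $\mathbb{R}$. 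This universal bound on $t_2$ is independent of $u$ and $v$, so $s = v - t_2\sigma^2 \ge v - \sigma^2 y b\|f\|_\infty \pi \to \infty$ as $v\to\infty$, uniformly in $u$.

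With $s\to\infty$ in hand, the asymptotics $t_1\to -1$ and $t_2\to 0$ follow from a crude bound on $wm(w)+1$: using $|\lambda - w|\ge s$,
\[
|wm(w)+1| \;=\; \Bigl|\int \frac{\lambda}{\lambda-w}\,dF(\lambda)\Bigr| \;\le\; \frac{\bar\lambda}{s} \;\le\; \frac{b}{s},
\]
where $\bar\lambda = \int \lambda\,dF(\lambda)$. Rewriting \eqref{eqn:t} as $t+1 = y(wm(w)+1)$ yields $|t+1| \le yb/s \to 0$ uniformly in $u$, which delivers both $t_1\to -1$ and $t_2\to 0$ and completes the proof. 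The only subtle point is invoking the bounded density of $F$ in step two; everything else is bookkeeping on the Stieltjes transform identity.
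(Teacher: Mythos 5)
Your proof is correct and uses the same key ideas as the paper's: take imaginary parts of \eqref{eqn:t}, exploit the bounded density of $F$ on $[a,b]$ to obtain a universal bound $t_2\le y\,b\,\|f\|_\infty\pi$ (the Cauchy-kernel estimate), deduce $v-t_2\sigma^2\to\infty$ uniformly in $u$, and read off $|t+1|\le C/(v-t_2\sigma^2)\to 0$ from the equation. The paper inserts an extra intermediate step (a quadratic dichotomy derived from $t_2\le yb/(v-t_2\sigma^2)$, its display \eqref{eq:t_2_possibilities}) to get the sharper bound $t_2\le (v-\sqrt{v^2-4\sigma^2yb})/(2\sigma^2)$ and thus $t_2\to 0$ directly; your version shows this detour is unnecessary once $|t+1|\to 0$ is in hand, and is otherwise the same argument.
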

\begin{proof}
Taking imaginary parts on both sides of equation (\ref{eqn:t}) yields
\begin{eqnarray}\label{expression_t2}
t_2= y \int_a^b \dfrac{x(v-t_2\si^2)}{|x-z+t\si^2|^2} \ dF(x).
\end{eqnarray}
It is then straightforward to verify that
$t_2>0$ and $v-t_2\si^2>0$.
Furthermore, since
\begin{align}
\label{sim_t2}
t_2=&\dfrac{y}{v-t_2\si^2} \ \int_a^b \dfrac{x}{1+\(\dfrac{x-u+t_1\si^2}{v-t_2\si^2}\)^2} \ dF(x)\\
\notag\le&\dfrac{yb}{v-t_2\si^2},
\end{align}
when $v\geq 2\si\sqrt{yb}$, we have
\begin{equation}\label{eq:t_2_possibilities}
{\rm either} ~~~~~
t_2 \ \geq  \ \dfrac{ \ v \ + \ \sqrt{v^2-4\sigma^2 yb}}{2\sigma^2} ~~~~~ \textrm{or} ~~~~~
t_2 \ \le \ \dfrac{ \ v \ - \ \sqrt{v^2-4\sigma^2 yb}}{2\sigma^2}.
\end{equation}

Denote $w=u-t_1\si^2$ and $\th=v-t_2\si^2$.
By \eqref{sim_t2}, if $F$ admits a bounded density $f$ and possibly a point mass at 0, then
\begin{eqnarray*}
t_2&=&\dfrac{y}{\theta} \ \int_a^b \ \dfrac{x}{ 1+\(\dfrac{x-w}{\theta}\)^2 } \  f(x) \ dx\\
&=& y \ \int_{\frac{a-w}{\theta}}^{\frac{b-w}{\theta}} \ \dfrac{ \ w+\theta l \ }
{ \ 1+l^2 \ } \ f(w+\theta l) \ dl.
\end{eqnarray*}
Because $f(\cdot)$ is bounded  and $x=w+\theta l \ \in(a,b)$ when $l\in (\frac{a-w}{\th},\frac{b-w}{\th})$,  there exists a constant $C$ such that
\[
t_2 \ \le \ C\int_{\frac{a-w}{\theta}}^{\frac{b-w}{\theta}} \
\dfrac{1}{1+l^2} \ dl \  \le \ C\int_{-\infty}^{+\infty} \ \dfrac{dl}{1+l^2} \ = \ C \ \pi.
\]
This, combined with \eqref{eq:t_2_possibilities}, implies that
\begin{eqnarray}\label{eqn:t2_upper_bd}
t_2 \ \le \ \dfrac{v \ - \ \sqrt{v^2-4\sigma^2 yb}}{2\sigma^2},\q\mbox{for all } v \mbox{ large enough}.
\end{eqnarray}
In particular,  uniformly in $u$,
\begin{equation}\label{eqn:t2_to_0_as_v_infty}
t_2\to 0 \mbox{ and } v-t_2\sigma^2\to\infty,\q \mbox{ as }v\to\infty.
\end{equation}
Moreover, from (\ref{eqn:t}) we get
\[
t+1 \ = \ y \ + \ y\int \ \dfrac{z-t\sigma^2}{ \ x-z+t\sigma^2 \ } \ dF(x) \
=y\int\dfrac{x}{ \ x-z+t\sigma^2 \ }  \  dF(x).
\]
Thus as $v\to\infty$,
\begin{eqnarray*}
|t_1+1| \le |t+1|
\le  y\int_a^b\dfrac{x}{ | \Im(x-z+t\sigma^2) | } \ dF(x)
\le  \dfrac{C}{v-t_2\sigma^2} \to ~ 0,
\end{eqnarray*}
also uniformly in $u$.
\end{proof}

\begin{lem}\label{lem:t_exist}
There exists $K_2\geq K_1$ such that for any $z\in\bC_2:=\left\{z=u+iv: v>K_2\right\}$, equation \eqref{eqn:t} admits a unique solution.
\end{lem}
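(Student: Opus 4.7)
The plan is to mimic the argument used in Lemma \ref{lem:tn_exist}, but applied to the limiting equation \eqref{eqn:t} instead of the pre-limit equation \eqref{eqn:t_n}. Define the map
\[
G(t) \ = \ y\int \dfrac{x}{x - z + t\sigma^2}\, dF(x) \ - \ 1,
\]
so that a fixed point of $G$ in $\mathscr{D}$ (with the same definition as in \eqref{dfn:D}, with $\sigma_n$ replaced by $\sigma$) is exactly a solution to \eqref{eqn:t}, using the rewriting $t+1 = y\int \frac{x}{x-z+t\sigma^2}\,dF(x)$ obtained just as in \eqref{re_t}. I would then apply the Banach fixed point theorem to $G$ on $\mathscr{D}$.

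The two checks are the direct analogs of Steps (i) and (ii) in Lemma \ref{lem:tn_exist}, simplified by the fact that $F$ is supported on $[a,b]$ exactly (rather than on an interval that only approximately equals $[a,b]$ for large $n$), so no truncation by $\wt{b} = b+1$ or $\wt{\sigma} = \sigma+1$ is needed. For step (i), taking imaginary parts gives
\[
\Im(G(t)) \ = \ \dfrac{y}{v - t_2\sigma^2}\int_a^b \dfrac{x}{1+\bigl(\tfrac{x-u+t_1\sigma^2}{v-t_2\sigma^2}\bigr)^2}\, dF(x),
\]
which is positive and bounded above by $yb/(v - t_2\sigma^2) \le 2yb/v \le v/(2\sigma^2)$ as soon as $v \ge 2\sigma\sqrt{yb}$ and $t \in \mathscr{D}$, so $G(\mathscr{D}) \subseteq \mathscr{D}$. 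For step (ii), writing
\[
G(t) - G(t') \ = \ -(t - t')\, y\sigma^2\int_a^b \dfrac{x}{(x-z+t\sigma^2)(x-z+t'\sigma^2)}\, dF(x),
\]
and applying Cauchy–Schwarz exactly as in Lemma \ref{lem:tn_exist} gives a Lipschitz constant bounded by $4\sigma^2 yb/v^2$, which is strictly less than $1$ once $v > 2\sigma\sqrt{yb}$.

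Choosing $K_2 = \max(K_1,\, 2\sigma\sqrt{yb})$ (and enlarging if needed so both bounds are strict on $\bC_2$), the Banach fixed point theorem applied to the contractive self-map $G:\mathscr{D} \to \mathscr{D}$ yields a unique fixed point $t \in \mathscr{D}$ for every $z \in \bC_2$, which is the unique solution of \eqref{eqn:t}. The only mildly delicate point is to ensure that the candidate domain $\mathscr{D}$ is correct — i.e., that any solution of \eqref{eqn:t} necessarily lies in $\mathscr{D}$ so that uniqueness in $\mathscr{D}$ implies full uniqueness; this is precisely what Lemma \ref{lem:t_properties} gives, since \eqref{eqn:t2_upper_bd} shows $0 < t_2 \le v/(2\sigma^2)$ for $v$ large. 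No step is really an obstacle: the whole proof is a direct adaptation of Lemma \ref{lem:tn_exist} with the empirical ESD replaced by its limit, made cleaner by the absence of the $n$-dependent truncation.
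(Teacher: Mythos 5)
Your proposal is correct and takes essentially the same approach as the paper: run the Banach fixed-point argument of Lemma~\ref{lem:tn_exist} on the limiting equation to get a unique solution in the region $\mathscr{D}$, then invoke Lemma~\ref{lem:t_properties} (in particular \eqref{eqn:t2_upper_bd}, i.e.\ $t_2\to 0$) to see that for $v$ large every solution must lie in $\mathscr{D}$, so uniqueness in $\mathscr{D}$ upgrades to full uniqueness. The only cosmetic difference is that you use $\sigma$ where the paper retains $\wt\sigma=\sigma+1$ in the definition of $\mathscr{D}$, which is immaterial once $K_2$ is taken large enough.
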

\begin{proof}
First, by the same proof as for Lemma \ref{lem:tn_exist}, one can show that for all $z=u+iv$ with $v\geq K_1$,  equation \eqref{eqn:t} admits a unique solution in $\sD$. Moreover, by Lemma \ref{lem:t_properties}, if $t=t_1+it_2$ solves \eqref{eqn:t}, then $t_2>0$; furthermore, we can find a constant $K_2$ such that
if $t$ solves~\eqref{eqn:t} for~$z$ with $v(=\Im(z))\geq K_2,$ then we must have $t_2\le{v}/{(2\wt{\si}^2)}$. The latter two properties imply that for all  $z$ with $v\geq K_2,$ the solution to \eqref{eqn:t} must lie in $\sD$. Redefining $K_2=\max(K_1,K_2)$ if necessary, we see that for all $z\in\bC_2$,  \eqref{eqn:t} admits a unique solution.
\end{proof}

\begin{lem}\label{lem:t_analytic}
There exists $K_3\geq K_2$ such that the solution $t=t(z)$ to \eqref{eqn:t} is analytic on $\bC_3:=\left\{z=u+iv: v>K_3\right\}$.
\end{lem}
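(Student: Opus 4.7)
The plan is to apply the holomorphic implicit function theorem to equation~\eqref{eqn:t}. I would introduce
\[
\Phi(z,t) \ := \ t \ - \ (y-1) \ - \ y(z-t\si^2)\,m(z-t\si^2),
\]
so that $\Phi(z,t(z))=0$ captures exactly \eqref{eqn:t}, and then I would verify the two standard hypotheses of the theorem: joint holomorphicity of $\Phi$ near the graph of $t(\cdot)$, and non-vanishing of $\partial_t\Phi$.

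For joint holomorphicity, the function $m(\cdot)$ is holomorphic on $\bC^+$, so it suffices to keep the argument $z-t\si^2$ in $\bC^+$ for $(z,t)$ in an open bidisc around each $(z,t(z))$ with $z\in\bC_2$. Lemma~\ref{lem:t_properties} already gives $\Im(z-t(z)\si^2)=v-t_2\si^2>0$ along the solution curve, and by continuity the strict positivity persists when $t$ is perturbed slightly off $t(z)$. This lets me treat $\Phi$ as a genuine two-variable holomorphic function on the relevant neighborhood.

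For the non-vanishing step, I would differentiate to obtain
\[
\frac{\partial \Phi}{\partial t}(z,t) \ = \ 1 \ + \ y\si^2\,m(z-t\si^2) \ + \ y\si^2(z-t\si^2)\,m'(z-t\si^2).
\]
By Lemma~\ref{lem:t_properties}, $t(z)\to -1$ uniformly in $\Re(z)$ as $v=\Im(z)\to\infty$, so both $|z-t(z)\si^2|\to\infty$ and $\Im(z-t(z)\si^2)\to\infty$. The standard asymptotics $m(\zeta)=-1/\zeta+O(|\zeta|^{-2})$ and $m'(\zeta)=1/\zeta^2+O(|\zeta|^{-3})$ for $\zeta\in\bC^+$ with $|\zeta|$ large then force both correction terms to vanish, so $\partial_t\Phi(z,t(z))\to 1$ uniformly in $u$. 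Hence there exists $K_3\ge K_2$ such that $\partial_t\Phi(z,t(z))\ne 0$ for all $z\in\bC_3$.

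With both conditions verified, the holomorphic implicit function theorem produces a unique analytic function on $\bC_3$ solving $\Phi(z,\cdot)=0$, and by the uniqueness established in Lemma~\ref{lem:t_exist} this analytic function coincides with $t(z)$. The only non-routine piece will be confirming that the open neighborhood on which $\Phi$ is jointly holomorphic actually extends across all of $\bC_3$ in a locally uniform way, but this is immediate once one chooses the neighborhood size as a continuous function of $z$ using the uniform lower bound on $v-t_2(z)\si^2$ provided by \eqref{eqn:t2_upper_bd}.
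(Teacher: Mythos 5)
Your proof is correct and takes essentially the same route as the paper: both define $\Phi(z,t)=t-(y-1)-y(z-t\si^2)m(z-t\si^2)$, invoke the implicit function theorem, and show $\partial_t\Phi(z,t(z))\to 1$ uniformly in $u$ as $v\to\infty$ using Lemma~\ref{lem:t_properties}. The only cosmetic difference is that the paper writes $\partial_t\Phi = 1+y\si^2\int x(x-\gamma)^{-2}\,dF(x)$ and bounds the integral directly by $b/(v-t_2\si^2)^2$, whereas you split it into $1+y\si^2 m(\gamma)+y\si^2\gamma m'(\gamma)$ and bound the two extra terms via large-$|\gamma|$ asymptotics of $m$ and $m'$ — these are the same computation since $m(\gamma)+\gamma m'(\gamma)=\int x(x-\gamma)^{-2}\,dF(x)$.
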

\begin{proof}
Define a function $G$ as
\[
G(z,t)=t-(y-1)-y(z-t\si^2)m(z-t\si^2), \ (z,t)\in \bC^+\times\bC^+ \mbox{ with } \Im(z-t\si^2)>0.
\]
That $t(z)$ solves \eqref{eqn:t} is equivalent to $G(z,t(z))=0$.
Write $z=u+iv$ and $t=t_1+it_2$.
By taking the partial derivative with respect to $t$ we get
\[
\dfrac{\partial G}{\partial t}=1+y\si^2
\int\dfrac{x}{\(x-(z-t\si^2)\)^2} ~ dF(x).
\]
Note that
\[
\left|\int\dfrac{x}{\(x-(z-t\si^2)\)^2} ~ dF(x)\right| ~
\le ~ \dfrac{b}{(v-t_2\si^2)^2},
\]
which, by \eqref{eqn:t2_to_0_as_v_infty}, goes to zero as $v\to\infty$.
Thus there exists a constant $K_3>0$ such that for all $z\in \bC_3$, ${\partial G}/{\partial t}(z,t(z))\neq 0$. It follows from the implicit function theorem and Lemma~\ref{lem:t_exist}
that $t=t(z)$ is  analytic on $\bC_3$.
\end{proof}

\begin{lem}\label{lem:tlimit}
Suppose that $t_n$ solves equation \eqref{eqn:t_n} for $z\in\bC_2$; then,  $\Im(t_n)>0$ and $\Im(z-t_n\si_n^2)>0$. moreover, if $t_n$ is the unique solution in the  set $\sD$, then,  with probability one, as $n\to\infty$, $t_n$ converges to a nonrandom complex number $t$ that uniquely solves equation \eqref{eqn:t}.
\end{lem}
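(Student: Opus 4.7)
The first claim comes from a sign analysis of the imaginary part of equation \eqref{eqn:t_n}. Rewrite \eqref{eqn:t_n} in the form of \eqref{re_t}:
\[
t_n + 1 = y_n \int \frac{x}{x - z + t_n\si_n^2}\, dF^{\S_n}(x).
\]
Writing $t_n = t_1 + i t_2$ and $z = u + iv$ and taking imaginary parts yields
\[
t_2 = y_n (v - t_2 \si_n^2) \int \frac{x}{|x - z + t_n \si_n^2|^2}\, dF^{\S_n}(x).
\]
Since $x \geq 0$ on $\mathrm{supp}(F^{\S_n})$ and (a.s.\ for all large $n$) $F^{\S_n}$ is not $\delta_0$, the integral is strictly positive. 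Therefore $t_2$ and $v - t_2 \si_n^2$ must have the same strict sign. They cannot both be negative, because $v > 0$ forces $t_2 > v/\si_n^2 > 0$ whenever $v - t_2\si_n^2 < 0$, a contradiction. Nor can either vanish, since $v > 0$. Hence $\Im(t_n) > 0$ and $\Im(z - t_n\si_n^2) > 0$.

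For the convergence claim, note that $\sD$ is compact, so any subsequence of $(t_n)$ has a further convergent subsequence $t_{n_k} \to t^*$ for some $t^* \in \sD$. Since $t_n \in \sD$ and $\si_n \to \si$, for all large $n$ we have $\Im(z - t_n\si_n^2) \geq v - v\si_n^2/(2(\si+1)^2) \geq v/2$, so the arguments $w_{n_k} := z - t_{n_k}\si_{n_k}^2$ lie in the half-strip $\{w \in \bC : \Im(w) \geq v/2\}$ and satisfy $w_{n_k} \to w^* := z - t^* \si^2 \in \bC^+$. By Theorem 1.1 of \cite{DS2007a}, $F^{\S_n} \toD F$ almost surely; hence the Stieltjes transforms $m_n(w) := \tfrac{1}{p}\tr(\S_n - w\I)^{-1}$ converge to $m(w)$ pointwise on $\bC^+$. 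The family $\{m_n\}$ is equicontinuous on $\{\Im(w) \geq v/2\}$ since $|m_n'(w)| \leq 4/v^2$ uniformly, so
\[
|m_{n_k}(w_{n_k}) - m(w^*)| \le |m_{n_k}(w_{n_k}) - m_{n_k}(w^*)| + |m_{n_k}(w^*) - m(w^*)| \longrightarrow 0.
\]

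Passing to the limit along $n_k$ in \eqref{eqn:t_n}, using $y_n \to y$ and $\si_n \to \si$, we obtain
\[
t^* = y - 1 + y(z - t^*\si^2)\, m(z - t^*\si^2),
\]
so $t^*$ solves \eqref{eqn:t}. By Lemma \ref{lem:t_exist}, equation \eqref{eqn:t} has a unique solution $t$ on $\bC_2 \supseteq \bC_3$, hence $t^* = t$. Since every subsequential limit of $(t_n)$ equals the same nonrandom $t$ and $(t_n)$ is bounded, $t_n \to t$ almost surely. The main technical step is the second paragraph, namely verifying that the perturbed Stieltjes arguments $z - t_n\si_n^2$ remain in a compact subset of $\bC^+$ along the subsequence, so that uniform-on-compacta control of $m_n - m$ can be used; this in turn rests crucially on the restriction $t_n \in \sD$ which keeps $\Im(t_n \si_n^2)$ bounded away from $\Im(z)$.
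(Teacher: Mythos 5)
The first half of your argument—taking imaginary parts of \eqref{eqn:t_n}, deducing $t_2(1+I\sigma_n^2)=vI$ with $I\ge 0$ the integral, and concluding $t_2>0$ and $v-t_2\sigma_n^2>0$—is sound and essentially the same elementary computation the paper invokes by analogy with Lemma~\ref{lem:t_properties}.

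The second half has a genuine gap: you assert that $\sD$ is compact, but it is not. By its definition \eqref{dfn:D}, $\sD=\{t\in\bC:0\le\Im(t)\le \Im(z)/(2(\si+1)^2)\}$ is an infinite horizontal strip with no constraint on $\Re(t)$. Consequently the step ``any subsequence of $(t_n)$ has a further convergent subsequence'' is unjustified, and so is the closing sentence ``$(t_n)$ is bounded.'' The paper closes exactly this hole with a separate tightness argument: it rewrites $t_n=(z-t_n\si_n^2)\ul{m}_n(z-t_n\si_n^2)$ using the companion Stieltjes transform, isolates $\Re(t_n)$ in \eqref{re_tn}, and shows that $|\Re(t_{n_k})|\to\infty$ along a subsequence would force the right-hand side of that identity to tend to $1$, a contradiction. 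Only after establishing that $\{t_n\}$ is almost surely bounded can one extract convergent subsequences. (One could also extract a cruder bound directly from \eqref{eqn:t_n} using $|m_n(z-t_n\si_n^2)|\le 2/v$ together with $2y_n\si_n^2/v<1$ for $v$ large, but some explicit boundedness argument is indispensable.)

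Once boundedness is in hand, your mechanism for passing to the limit—uniform Lipschitz control of $m_n$ on $\{\Im(w)\ge v/2\}$ combined with pointwise a.s.\ convergence $m_n\to m$—is a clean alternative to the paper's use of the dominated convergence theorem, and the final appeal to uniqueness from Lemma~\ref{lem:t_exist} is correct. So the shape of your proof is right, but the compactness claim must be replaced by an actual tightness argument for $\Re(t_n)$.
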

\begin{proof}
Write $z=u+iv$ and $t_n=t_{n1}+it_{n2}$.
Similar to the proof of Lemma \ref{lem:t_properties}, taking imaginary parts on both sides of equation (\ref{eqn:t_n}), one can easily show that
$t_{n2}>0$ and $v-t_{n2}\si_n^2>0$.

Next we show that $\{t_n\}$ is tight; in other words, for any $\eps>0$, there exists $C>0$, such that for all sufficiently large $n$,  $P\left(|t_n|>C\right)<\eps$.
Because $0<t_{n2}<v/\sigma_n^2$, it suffices to show that $\{|t_{n1}|\}$ is tight.

Let $\ul{\S}_n= \dfrac{1}{n}(\A_n+\si_n\bE_n)^T(\A_n+\si_n\bE_n)$, and let $\ul{m}_n(z)$ be the Stieltjes transform of the ESD $F^{\ul{\S}_n}$. The spectra of $\S_n$ and $\ul{\S}_n$ differ by $|p-n|$ number of zero eigenvalues;  hence,
$
F^{\ul{\S}_n}=(1-y_n) I_{[0,\infty)} +y_n F^{\S_n},
$
and
\begin{eqnarray}
\ul{m}_n(z)=-\dfrac{1-y_n}{z}+y_n m_n(z).
\label{mmn}
\end{eqnarray}
Thus, equation (\ref{eqn:t_n}) can also be expressed as
\begin{eqnarray*}
t_n&=&y_n-1+y_n(z-t_n\si_n^2)m_n(z-t_n\si_n^2) \nonumber\\
&=&(z-t_n\si_n^2)\ul{m}_n(z-t_n\si_n^2).
\end{eqnarray*}
Taking real parts on both sides yields
\[
\Re(t_{n})=\int\dfrac{x(u-\Re(t_{n})\si_n^2)-|z-t_n\si_n^2|^2}{|x-z+t_n\si_n^2|^2} \ dF^{\ul{\S}_n}(x).
\]
Solving for $\Re(t_{n})$ yields
\begin{equation}\label{re_tn}
\Re(t_{n}) = \dfrac{\displaystyle\int \dfrac{xu-|z-t_{n}\si_{n}^2|^2}{|x-z+t_{n}\si_{n}^2|^2}\ dF^{\ul{\S}_{n}}(x)}
{1+\si_{n}^2\displaystyle\int \dfrac{x}{|x-z+t_{n}\si_{n}^2|^2}\ dF^{\ul{\S}_{n}}(x)}
\end{equation}

Now suppose that $\{t_{n1}=\Re(t_n)\}$ is not tight; then, with positive probability, there exists a subsequence $\{n_k\}$ such that $|\Re(t_{n_k})|\to\infty$.
By \eqref{re_tn}, we have
\begin{eqnarray*}
|\Re(t_{n_k})|
&\le& \int_{a_{n_k}}^{b_{n_k}} \dfrac{x|u|+|z-t_{n_k}\si_{n_k}^2|^2}{|x-z+t_{n_k}\si_{n_k}^2|^2} ~ dF^{\ul{\S}_{n_k}}(x).
\end{eqnarray*}
However, as $k$ goes to infinity, if  $|\Re(t_{n_k})|\to\infty$, because  $\{F^{\ul{\S}_{n_k}}\}$ is tight and $\si_{n_k}\to\si>0$, one gets that the RHS goes to 1. This contradicts the supposition that $|\Re(t_{n_k})|\to\infty$.

Next, for any convergent subsequence $\{t_{n_k}\}$ in  set $\sD$, by \eqref{eqn:v_t_lower_bd}, for all sufficiently large $n_k$, we have $v-\Im(t_{n_k})\si_{n_k}^2\geq v/2$. We can then apply the dominated convergence theorem  to conclude that the limit point of $\{t_{n_k}\}$ must satisfy equation \eqref{eqn:t}.
By Lemma \ref{lem:t_exist}, the solution is unique; hence, the whole sequence $\{t_n\}$ converges to the unique solution to equation~\eqref{eqn:t}.
\end{proof}


\subsection{Some further preliminary results}\label{ssec:prelim}
Let $K^*=\max\{K_1,K_2,K_3\}$$(=K_3)$ for  $K_1$, $K_2$ and $K_3$ as defined in Lemmas \ref{lem:tn_exist}, \ref{lem:t_exist} and \ref{lem:t_analytic}, respectively. Also define
$\bC^*=\{z\in\bC^+: \Im(z)>K^*\}$.
Below we work with $z\in\bC^*$.

Let ${\bf a}_j$ and $\pep_j$, $j=1,\ldots,n$, be the $j$th column of ${\bf A}_n$ and  $\bE_n$, and let ${\bf b}_j=\sigma_n \pep_j$.
Denote ${\pmb {\pmb\xi}}_j=({\bf a}_j+{\bf b}_j)/\sqrt{n}$ so that ${\bf S}_n=\sum_{j=1}^n{\pmb {\pmb\xi}}_j{\pmb {\pmb\xi}}_j^T$.
For any complex number $t_n$ such that $\Im(z-t_n\si_n^2)> 0$,  define
\begin{eqnarray}
&&~~~{\bf R}_n  =  {\bf S}_n-(z-t_n\sigma_n^2)  \I ,\q\q\q\q \delta_n =\dfrac{\sigma_n^2}{n}{\rm tr}({\bf R}_n^{-1})
=y_n\si_n^2\dfrac{1}{p}{\rm tr}(\R_n^{-1}), \nonumber\\
&&~~~{\bf S}_{nj}  =  {\bf S}_n-{\pmb \xi}_j{\pmb \xi}_j^T \ = \ \sum_{k\neq j}{\pmb\xi}_k{\pmb\xi}_k^T , ~~\q
{\bf R}_{nj}  =  {\bf S}_{nj}-(z-t_n\sigma_n^2)  \I, \label{betaj}\\
&&~~~{\bf B}_n  =  \dfrac{1}{1+\delta_n}\dfrac{1}{n}{\bf A}_n{\bf A}_n^T-z  \I, \mbox{ and }
~ \beta_j  =  \dfrac{1}{ \ 1+{\pmb\xi}_j^T{\bf R}_{nj}^{-1}{\pmb\xi}_j \ }. \nonumber
\end{eqnarray}

According to equation (2.2) in \cite{SB95}, we have
\begin{eqnarray}\label{xiRn}
{\pmb\xi}_j^T \ \R_n^{-1}
= \ \dfrac{{\pmb\xi}_j^T{\bf R}_{nj}^{-1}}{ \ 1+{\pmb\xi}_j^T{\bf R}_{nj}^{-1}{\pmb\xi}_j \ }=\beta_j{\pmb\xi_j}^T\R_{nj}^{-1}.
\end{eqnarray}
Thus,  using the identity
$
\A^{-1}-\B^{-1}=\A^{-1}(\B-\A)\B^{-1},
$
we obtain
\begin{eqnarray}
{\bf R}_n^{-1} \ = \ {\bf R}_{nj}^{-1}-{\bf R}_n^{-1}{\pmb\xi}_j{\pmb\xi}_j^T{\bf R}_{nj}^{-1} \
= \ {\bf R}_{nj}^{-1}-\beta_j{\bf R}_{nj}^{-1}{\pmb\xi}_j{\pmb\xi}_j^T{\bf R}_{nj}^{-1}.
\label{RRbeta}
\end{eqnarray}

Next, we introduce another definition of $t_n$ as the solution to the following equation
\begin{eqnarray}\label{dfn:t_alternative}
t_n = \ -\dfrac{1}{n}\sum_{j=1}^n \beta_j \
= \ -\dfrac{1}{n}\sum_{j=1}^n \ \dfrac{1}{ \ 1+{\pmb\xi}_j^T{\bf R}_{nj}^{-1}{\pmb\xi}_j \ }.
\end{eqnarray}
We claim that the definition of $t_n$ in (\ref{dfn:t_alternative}) is equivalent to the earlier definition of defining $t_n$ to be the solution to equation \eqref{eqn:t_n}. In fact, 
write
\[
\R_n+z  \I \ = \ \sum_{j=1}^n \ {\pmb\xi}_j{\pmb\xi}_j^T+t_n\sigma_n^2 \ \I.
\]
Right-multiplying both sides by ${\bf R}_n^{-1}$ and using \eqref{xiRn} yield
\[
\I+z \ {\bf R}_n^{-1}=\sum_{j=1}^n \ {\pmb\xi}_j{\pmb\xi}_j^T{\bf R}_n^{-1}+t_n\sigma_n^2 \ {\bf R}_n^{-1}
=\sum_{j=1}^n \ \dfrac{{\pmb\xi}_j{\pmb\xi}_j^T{\bf R}_{nj}^{-1}}{ \ 1+{\pmb\xi}_j^T{\bf R}_{nj}^{-1}{\pmb\xi}_j \ } +t_n\sigma_n^2 \
 {\bf R}_n^{-1}.
\]
Taking the trace on both sides and dividing by $n$, one gets
\begin{eqnarray}\label{beta_tn}
y_n+z \ \dfrac{1}{n}{\rm tr}({\bf R}_n^{-1})
&=& 1-\dfrac{1}{n}\sum_{j=1}^n
\dfrac{1}{ \ 1+{\pmb\xi}_j^T{\bf R}_{nj}^{-1}{\pmb\xi}_j \ }
+t_n\sigma_n^2 \ \dfrac{1}{n}{\rm tr}({\bf R}_n^{-1}) \nonumber\\
&=&1-\dfrac{1}{n}\sum_{j=1}^n \beta_j
+t_n\sigma_n^2 \ \dfrac{1}{n}{\rm tr}({\bf R}_n^{-1}).
\end{eqnarray}
This shows that if $t_n$ satisfies \eqref{dfn:t_alternative}, then $t_n$ satisfies equation \eqref{eqn:t_n}.
On the other hand, if $t_n$ satisfies equation \eqref{eqn:t_n}, from \eqref{beta_tn} we have
\[
-\dfrac{1}{n}\sum_{j=1}^n \beta_j
= y_n-1+(z-t_n\si_n^2)\dfrac{1}{n}\tr(\R_n)^{-1}
= t_n,
\]
namely, $t_n$ satisfies \eqref{dfn:t_alternative}.

We proceed to analyze the difference in \eqref{eqn:prop_LSD_signal_noise}.
Because
\begin{eqnarray*}
{\bf S}_n-\dfrac{1}{1+\delta_n} \ \dfrac{1}{n}{\bf A}_n{\bf A}_n^T
&=&\dfrac{1}{n}\sum_{j=1}^n \ ({\bf a}_j+{\bf b}_j)({\bf a}_j+{\bf b}_j)^T-\dfrac{1}{1+\delta_n} \ \dfrac{1}{n}\sum_{j=1}^n  {\bf a}_j{\bf a}_j^T\\
&=&\dfrac{1}{n}\sum_{j=1}^n \ \left(\dfrac{\delta_n}{1+\delta_n} \ {\bf a}_j{\bf a}_j^T+{\bf a}_j{\bf b}_j^T+{\bf b}_j{\bf a}_j^T+{\bf b}_j{\bf b}_j^T\right),
\end{eqnarray*}
and recall that ${\bf R}_{n}  =  {\bf S}_{n}-(z-t_n\sigma_n^2)  \I$ and
${\bf B}_n  =  \dfrac{1}{1+\delta_n}\dfrac{1}{n}{\bf A}_n{\bf A}_n^T-z  \I$,
we have
\[
\aligned
\Delta
:=&\dfrac{1}{p}{\rm tr}\left({\bf B}_n^{-1} - {\bf R}_{n}^{-1}\right)\\
=&\dfrac{1}{p}{\rm tr}\({\bf B}_n^{-1} \left({\bf S}_n-\dfrac{1}{1+\delta_n}\dfrac{1}{n}{\bf A}_n{\bf A}_n^T+t_n\sigma_n^2 \I\right)
{\bf R}_{n}^{-1}\)\\
=&\dfrac{1}{np}\sum_{j=1}^n \ \Bigg\{
\dfrac{\delta_n}{1+\delta_n} \ {\bf a}_j^T{\bf R}_{n}^{-1}{\bf B}_n^{-1}{\bf a}_j
+ \ {\bf b}_j^T{\bf R}_{n}^{-1}{\bf B}_n^{-1}{\bf a}_j
+ \ {\bf a}_j^T{\bf R}_{n}^{-1}{\bf B}_n^{-1}{\bf b}_j
+ \ {\bf b}_j^T{\bf R}_{n}^{-1}{\bf B}_n^{-1}{\bf b}_j\Bigg\}\\
& + \ \dfrac{t_n\sigma_n^2}{p}{\rm tr}\left({\bf R}_{n}^{-1}{\bf B}_n^{-1}\right).
\endaligned
\]

Using (\ref{RRbeta}), we obtain
\begin{eqnarray*}
\Delta&=&\dfrac{1}{np} \ \sum_{j=1}^n \ \bigg[ \ \dfrac{\delta_n}{1+\delta_n} \ {\bf a}_j^T{\bf R}_{nj}^{-1}{\bf B}_n^{-1}{\bf a}_j \ - \ \dfrac{\delta_n}{1+\delta_n} \ \beta_j \  {\bf a}_j^T{\bf R}_{nj}^{-1}{\pmb\xi}_j{\pmb\xi}_j^T{\bf R}_{nj}^{-1}{\bf B}_n^{-1}{\bf a}_j\\
&&+ \ {\bf b}_j^T{\bf R}_{nj}^{-1}{\bf B}_n^{-1}{\bf a}_j \ - \ \beta_j \ {\bf b}_j^T{\bf R}_{nj}^{-1}{\pmb\xi}_j{\pmb\xi}_j^T{\bf R}_{nj}^{-1}{\bf B}_n^{-1}{\bf a}_j\\
&&  + \ {\bf a}_j^T{\bf R}_{nj}^{-1}{\bf B}_n^{-1}{\bf b}_j \ - \ \beta_j \ {\bf a}_j^T{\bf R}_{nj}^{-1}{\pmb\xi}_j{\pmb\xi}_j^T{\bf R}_{nj}^{-1}{\bf B}_n^{-1}{\bf b}_j\\
&&+ \ {\bf b}_j^T{\bf R}_{nj}^{-1}{\bf B}_n^{-1}{\bf b}_j \ - \ \beta_j \ {\bf b}_j^T{\bf R}_{nj}^{-1}{\pmb\xi}_j{\pmb\xi}_j^T{\bf R}_{nj}^{-1}{\bf B}_n^{-1}{\bf b}_j \ \bigg]\\
&&+ \ \dfrac{t_n\sigma_n^2}{p} \ {\rm tr}({\bf R}_n^{-1}{\bf B}_n^{-1}).
\end{eqnarray*}
Define
\begin{eqnarray}\label{etaj}
\begin{array}{cc}
\rho_j=\dfrac{1}{n}{\bf a}_j^T{\bf R}_{nj}^{-1}{\bf a}_j,
& \hat{\rho}_j=\dfrac{1}{n}{\bf a}_j^T{\bf R}_{nj}^{-1}{\bf B}_n^{-1}{\bf a}_j, \\
w_j=\dfrac{1}{n}{\bf b}_j^T{\bf R}_{nj}^{-1}{\bf b}_j,
& \hat{w}_j=\dfrac{1}{n}{\bf b}_j^T{\bf R}_{nj}^{-1}{\bf B}_n^{-1}{\bf b}_j, \\
\eta_j=\dfrac{1}{n}{\bf a}_j^T{\bf R}_{nj}^{-1}{\bf b}_j,
& \hat{\eta}_j=\dfrac{1}{n}{\bf a}_j^T{\bf R}_{nj}^{-1}{\bf B}_n^{-1}{\bf b}_j,\\
\gamma_j=\dfrac{1}{n}{\bf b}_j^T{\bf R}_{nj}^{-1}{\bf a}_j,
&\hat{\gamma}_j=\dfrac{1}{n}{\bf b}_j^T{\bf R}_{nj}^{-1}{\bf B}_n^{-1}{\bf a}_j.
\end{array}
\end{eqnarray}
Certainly, $\eta_j=\ga_j$, but introducing $\ga_j$ makes the computations below more clear.

Recall that ${\pmb\xi}_j=({\bf a}_j+{\bf b}_j)/\sqrt{n}$, and so
$\beta_j^{-1}=1+\rho_j+w_j+\eta_j+\ga_j$.
We can then rewrite $\Delta$ as
\begin{eqnarray*}
\Delta&=&\dfrac{1}{p}\sum_{j=1}^n\beta_j\bigg(
\dfrac{\delta_n}{1+\delta_n}\hat{\rho}_j(1+\rho_j+\eta_j+\gamma_j+w_j)
-\dfrac{\delta_n}{1+\delta_n}(\rho_j+\eta_j)(\hat{\rho}_j+\hat{\ga}_j)\\
&&+\hat{\gamma}_j(1+\rho_j+\eta_j+\gamma_j+w_j)-(\ga_j+w_j)(\hat{\ga}_j+\hat{\rho}_j)\\
&&+\hat{\eta}_j(1+\rho_j+\eta_j+\gamma_j+w_j)-(\rho_j+\eta_j)(\hat{\eta}_j+\hat{w}_j)\\
&&+\hat{w}_j(1+\rho_j+\eta_j+\gamma_j+w_j)-(\gamma_j+w_j)(\hat{\eta}_j+\hat{w}_j)\bigg)\\
&&+\dfrac{t_n\sigma_n^2}{p} {\rm tr}({\bf R}_n^{-1}{\bf B}_n^{-1})\\
&=&\dfrac{1}{p}\sum_{j=1}^n \beta_j\left(\dfrac{1}{1+\delta_n}\hat{\rho}_j(\delta_n-\gamma_j-w_j)+
\hat{\gamma}_j\left(1+\dfrac{1}{1+\delta_n}(\rho_j+\eta_j)\right)+\hat{\eta}_j+\hat{w}_j\right)\\
&&+\dfrac{t_n\sigma_n^2}{p} {\rm tr}({\bf R}_n^{-1}{\bf B}_n^{-1})\\
&:=&\Delta_1+\Delta_2+\Delta_3,
\end{eqnarray*}
where
\begin{align}\label{eq:Delta}
\notag \Delta_1
\notag         =&\dfrac{1}{p(1+\delta_n)}\sum_{j=1}^n\beta_j\hat{\rho}_j(\delta_n-w_j)
-\dfrac{1}{p(1+\delta_n)}\sum_{j=1}^n\beta_j\hat{\rho}_j\gamma_j,\\
      \Delta_2
       =&\dfrac{1}{p}\sum_{j=1}^n\beta_j\hat{\gamma}_j\left(1+\dfrac{1}{1+\delta_n}(\rho_j+\eta_j)\right)
+\dfrac{1}{p}\sum_{j=1}^n\beta_j\hat{\eta}_j, \q \mbox{and}\\
\notag  \Delta_3
\notag         =&\dfrac{1}{p}\sum_{j=1}^n\beta_j\left(\hat{w}_j-\dfrac{\sigma_n^2}{n} {\rm tr}({\bf R}_n^{-1}{\bf B}_n^{-1})\right),
\end{align}
and in the last equality we used the equivalent definition \eqref{dfn:t_alternative} of $t_n$.

\begin{lem}\label{4Lems}
Suppose that $t_n$ solves equation (\ref{eqn:t_n}) for $z=u+iv\in\bC^*$; then,
 \begin{compactenum}[(i)]
  \item\label{betabound}  for all $j=1,\ldots,n$, $|\beta_j|$ is bounded by $\dfrac{|z-t_n\sigma_n^2|}{v-\Im(t_{n})\sigma_n^2}$;
  \item\label{Bbound} $\|\B_n^{-1}\|$ is bounded by $v^{-1}$;
  \item\label{rv4} the random variables $\varpi_j$  satisfy
$$\max_{1\le j\le n} E|\varpi_j|^4\le \ \dfrac{C(\log n)^6}{n^2(v-t_{n2}\si_n^2)^4},$$
where $\varpi_j$ can be any of $\eta_j$, $\hat{\eta}_j$, $\gamma_j$ and $\hat{\gamma}_j$ defined in (\ref{etaj}), and $C$ is a constant independent of $n$;
  \item\label{ww}  the random variables $w_j$ and $\hat{w}_j$ satisfy
\begin{eqnarray*}
&&\max_{1\le j\le n} E\left|w_j-\dfrac{\sigma_n^2}{n}{\rm tr}({\bf R}_n^{-1})\right|^4
\le \ \dfrac{C(\log n)^8}{n^2(v-t_{n2}\sigma_n^2)^4}, \\
&&\max_{1\le j\le n} E\left|\hat{w}_j-\dfrac{\sigma_n^2}{n}{\rm tr}({\bf R}_n^{-1}{\bf B}_n^{-1})\right|^4
\le \ \dfrac{C(\log n)^8}{n^2v^4(v-t_{n2}\sigma_n^2)^4}.
\end{eqnarray*}
 \end{compactenum}
\end{lem}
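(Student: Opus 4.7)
The proof splits naturally into two halves. Parts (i)--(ii) are deterministic resolvent computations, while parts (iii)--(iv) are conditional concentration estimates against the noise vector $\pep_j$.

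For (i), starting from $\beta_j^{-1} = 1 + \pmb\xi_j^T \R_{nj}^{-1}\pmb\xi_j$ and the spectral decomposition of $\S_{nj}$ (with nonnegative eigenvalues $d_k$ and orthonormal eigenvectors giving coordinates $\tilde\xi_{jk}$ of $\pmb\xi_j$), setting $y := z - t_n\si_n^2$, I will compute
\[
\Im\!\bigl(\bar y\,\beta_j^{-1}\bigr) \;=\; -y_2 + \sum_k |\tilde\xi_{jk}|^2 \cdot \Im\!\Bigl(\tfrac{\bar y}{d_k-y}\Bigr) \;=\; -y_2 - y_2\sum_k \frac{d_k|\tilde\xi_{jk}|^2}{|d_k-y|^2}\;\le\; -y_2,
\]
where $y_2 := \Im(y) = v - t_{n2}\si_n^2 > 0$ by Lemma~\ref{lem:tlimit}. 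Hence $|\beta_j^{-1}|\ge y_2/|y|$, which is the claim. Part (ii) uses the same spectral template on $\B_n = (1+\de_n)^{-1}M_n - zI$, where $M_n := n^{-1}\A_n\A_n^T$ is positive semidefinite with eigenvalues $\mu_k\ge 0$. Since $\R_n^{-1}$ has eigenvalues $1/(\lambda_k-y)$ with imaginary parts $y_2/|\lambda_k-y|^2\ge 0$, one has $\Im(\de_n)\ge 0$ and hence $\Im\!\bigl((1+\de_n)^{-1}\bigr)\le 0$; combined with $\mu_k\ge 0$, every eigenvalue $\mu_k/(1+\de_n)-z$ of $\B_n$ has imaginary part at most $-v$, giving $\|\B_n^{-1}\|\le 1/v$.

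For (iii) and (iv), the plan is to condition on the $\sigma$-field $\cF_j$ generated by $\A_n$ and $\{\b_k\}_{k\neq j}$, so that $\a_j$ and $\R_{nj}$ are $\cF_j$-measurable and only $\pep_j = \b_j/\si_n$ remains random. Then $\eta_j = \tfrac{\si_n}{n}\langle \R_{nj}^{-1}\a_j,\, \pep_j\rangle$ is, conditionally on $\cF_j$, a linear form in i.i.d.\ centered entries obeying $|\ep_{11}|\le a\log n$ (so $E\ep_{11}^4\le (a\log n)^2$ with $E\ep_{11}^2 = 1$). A direct fourth-moment expansion then yields
\[
E\!\bigl[|\eta_j|^4\,\bigm|\,\cF_j\bigr]\;\le\; C\,\si_n^4\,n^{-4}\,(\log n)^2\,\|\R_{nj}^{-1}\a_j\|^4,
\]
and bounding $\|\R_{nj}^{-1}\a_j\|^2\le \|\R_{nj}^{-1}\|^2\|\a_j\|^2\le n\log n/y_2^2$ via assumption \eqref{asm:A_bd} gives the claim for $\eta_j$ and, by symmetry, for $\ga_j$. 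For $w_j$, Bai--Silverstein's Lemma 2.7 applied conditionally to $\pep_j^T\R_{nj}^{-1}\pep_j$ controls the deviation from $\si_n^2\tr(\R_{nj}^{-1})/n$ within the stated bound; the identity $\tr(\R_n^{-1}) - \tr(\R_{nj}^{-1}) = -\beta_j\,\pmb\xi_j^T\R_{nj}^{-2}\pmb\xi_j$ combined with (i) and $\|\pmb\xi_j\|^2 = O((\log n)^2)$ shows that the two normalized traces differ deterministically by $O(n^{-1})$, which is absorbed.

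The main obstacle will be the quantities $\hat\eta_j$, $\hat\ga_j$, and $\hat w_j$, because $\B_n^{-1}$ depends on $\b_j$ through $\de_n$ (and implicitly through $t_n$), so $\B_n^{-1}$ is \emph{not} $\cF_j$-measurable and the above conditional argument does not apply directly. My plan is to introduce a $\cF_j$-measurable surrogate $\B_n^{(j)}$ obtained by replacing $\de_n$ with a leave-one-out $\de_n^{(j)}$ built from $\tr(\R_{nj}^{-1})$ (and similarly for $t_n$), and to exploit the resolvent identity $\B_n^{-1} - (\B_n^{(j)})^{-1} = \B_n^{-1}(\B_n^{(j)} - \B_n)(\B_n^{(j)})^{-1}$ together with $\|\B_n^{-1}\|\le 1/v$ from (ii) and the interlacing bound above to show that this replacement costs only $O(n^{-1})$ in operator norm. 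The surrogates for $\hat\eta_j$ and $\hat\ga_j$ then reduce to the same conditional linear-form bound as $\eta_j$, and $\hat w_j$ to the same quadratic-form bound as $w_j$.
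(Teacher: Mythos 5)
Your treatment of parts (i) and (ii) matches the paper's (spectral decomposition of $\S_{nj}$ to control $\Im(\bar y\,\beta_j^{-1})$, and the eigenvalue shift $\mu_k/(1+\de_n)-z$ combined with $\Im(\de_n)\geq 0$). For the non-hatted quantities $\eta_j$, $\ga_j$, $w_j$, your conditional linear/quadratic-form plan is essentially what the paper does via Lemma~2.7 of Bai--Silverstein (including the rank-one trace perturbation for $w_j$ via $\tr(\R_n^{-1})-\tr(\R_{nj}^{-1})$).

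However, your proposal is internally inconsistent on the key measurability point, and this is worth flagging because it affects your proposed surrogate step. You assert that $\a_j$ and $\R_{nj}$ are $\cF_j$-measurable, so that only $\pep_j$ remains random, and you single out $\B_n^{-1}$ as ``\emph{not} $\cF_j$-measurable'' because it depends on $\b_j$ ``through $\de_n$ (and implicitly through $t_n$).'' But $\R_{nj} = \S_{nj} - (z-t_n\si_n^2)\I$ also depends on $t_n$, and $t_n$ is defined in \eqref{eqn:t_n} as the solution of an equation involving $\tr(\R_n^{-1})$, i.e.\ the full $\S_n$ including $\pmb\xi_j$. So by your own reasoning, $\R_{nj}^{-1}$ is \emph{not} $\cF_j$-measurable either, and the conditional concentration bound you apply for $\eta_j$ has exactly the same obstruction you flag for $\hat\eta_j$. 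If you are going to introduce a leave-one-out surrogate $\B_n^{(j)}$ to restore $\cF_j$-measurability, consistency requires a companion leave-one-out version $t_n^{(j)}$ (and hence $\R_{nj}^{(j)}$) as well, with a second resolvent-identity estimate to transfer back. The paper itself does not do this: it simply asserts that ``$\pep_j$ is independent of $\R_{nj}^{-1}$ and $\a_j$'' and says the hatted variables ``can be handled in a similar way by using \eqref{Bbound},'' sweeping the $t_n$-dependence and the $\B_n^{-1}$-dependence under the rug in one sentence. So you have caught a genuine subtlety that the paper elides, but only half of it --- and any rigorous fix has to treat the $t_n$-dependence of $\R_{nj}$ and $\R_n$ (and hence $\de_n$) uniformly, not just the explicit $\de_n$-dependence of $\B_n$. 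Since the bounds to be proved are $O((\log n)^c/n^2)$, a crude deterministic bound such as $|\hat\eta_j|\le n^{-1}\|\a_j\|\,\|\R_{nj}^{-1}\|\,\|\B_n^{-1}\|\,|\pep_j|$ gives only $O((\log n)^{3/2})$, which is useless; the concentration really is needed, and hence so is a complete resolution of the measurability issue before Lemma 2.7 of Bai--Silverstein can be invoked.

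Two small computational notes. First, in the proposed conditional fourth-moment bound for $\eta_j$, $E|\ep_{11}|^4\le (a\log n)^2$ together with $\|\R_{nj}^{-1}\a_j\|^4\le n^2(\log n)^2/(v-t_{n2}\si_n^2)^4$ actually yields $(\log n)^4/n^2$ rather than $(\log n)^6/n^2$; the stated $(\log n)^6$ is a valid (if slack) upper bound, so this is cosmetic. Second, for your $\B_n^{(j)}$ surrogate you must verify that $\|(\B_n^{(j)})^{-1}\|\le 1/v$ as well, which requires $\Im(\de_n^{(j)})\ge 0$; this holds by the same eigenvalue argument provided $\Im(z-t_n^{(j)}\si_n^2)>0$, so you will need to establish the analogue of Lemma~\ref{lem:tlimit} for the leave-one-out $t_n^{(j)}$.
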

\begin{proof}
We first prove  \eqref{betabound}.
Write $t_n=t_{n1}+it_{n2}$. Note that
\begin{eqnarray*}
&&\Im\left\{(z-t_n\sigma_n^2){\pmb\xi}_j^T{\bf R}_{nj}^{-1}{\pmb\xi}_j\right\}\\
=&&\Im\left\{{\pmb\xi}_j^T\left(\dfrac{1}{z-t_n\sigma_n^2}{\bf S}_{nj}-\I\right)^{-1}{\pmb\xi}_j\right\}\\
=&&\dfrac{1}{2i}{\pmb\xi}_j^T\left[\left(\dfrac{1}{z-t_n\sigma_n^2}{\bf S}_{nj}-\I\right)^{-1}
-\left(\dfrac{1}{\overline{z-t_n\sigma_n^2}}{\bf S}_{nj}-\I\right)^{-1}\right]{\pmb\xi}_j\\
=&&\dfrac{v-t_{n2}\sigma_n^2}{|z-t_n\sigma_n^2|^2} \ {\pmb\xi}_j^T\left(\dfrac{1}{z-t_n\sigma_n^2}{\bf S}_{nj}-\I\right)^{-1}
{\bf S}_{nj}\left(\dfrac{1}{\overline{z-t_n\sigma_n^2}}{\bf S}_{nj}-\I\right)^{-1}{\pmb\xi}_j \\
\geq && 0,
\end{eqnarray*}
where the last inequality is because of  Lemma \ref{lem:tlimit}. Therefore,
\begin{eqnarray*}
|\beta_j|&=& \dfrac{|z-t_n\sigma_n^2|}{ \ |(z-t_n\sigma_n^2)(1+{\pmb\xi}_j^T{\bf R}_{nj}^{-1}{\pmb\xi}_j)| \ } \\
&\le& \dfrac{|z-t_n\sigma_n^2|}{ \ |\Im\{(z-t_n\sigma_n^2)(1+{\pmb\xi}_j^T{\bf R}_{nj}^{-1}{\pmb\xi}_j)\}| \ }  \\
&\le&   \dfrac{ \ |z-t_n\sigma_n^2| \ }{ \ v-t_{n2}\sigma_n^2 \ }.
\end{eqnarray*}

As to \eqref{Bbound}, note that any eigenvalue of $\B_n=\dfrac{1}{n(1+\delta_n)}{\bf A}_n{\bf A}_n^T-z\I$
can be expressed as $\la^B=\dfrac{1}{1+\de_n}\la-z$, where $\la$ is an eigenvalue of $\dfrac{1}{n}\A_n\A_n^T$. We have
\[
|\lambda^B|\geq |\Im(\lambda^B)|=\left|\dfrac{\Im(\delta_n)}{|1+\delta_n|^2}\lambda+v\right|\geq v,
\]
where the last step follows from the fact that
$
\Im(\delta_n)=y_n\sigma_n^2\Im(m_n(z-t_n\sigma_n^2)) \ >0,
$
because of Lemma \ref{lem:tlimit}.

Now we prove \eqref{rv4}. We shall only establish the inequality for $\eta_j(=\ga_j)$; the other two variables $\hat{\eta}_j$ and $\hat{\gamma}_j$ can be handled in a similar way by using~\eqref{Bbound}.

Because for any Hermitian matrix $\A$ and $z\in\bC^+$, $\|(\A-z\I)^{-1}\|\le 1/\Im(z)$, we have by Lemma \ref{lem:tlimit}  that
\begin{eqnarray}\label{bound_RRj}
~~~~\|{\bf R}_n^{-1}\|\le \dfrac{1}{(v-t_{n2}\sigma_n^2)}, ~~~{\rm and}~~~
\max_{1\le j\le n}\|{\bf R}_{nj}^{-1}\|\le \dfrac{1}{(v-t_{n2}\sigma_n^2)}.
\end{eqnarray}

Recall that ${\bf b}_j=\sigma_n \pep_j$, and $\pep_j$ satisfies   $E(\pep_j\pep_j^T)=\I$.
The strengthened assumption \eqref{asm:A_bd} implies that $|{\bf a}_j|\le C\sqrt{n\log n}$.
Note also that $\pep_j$ is independent of ${\bf R}_{nj}^{-1}$ and ${\bf a}_j$. Moreover,  using Lemma \ref{xtrx} in  Appendix \ref{appendix:lemmas},
Assumption \eqref{asm:eps_bd} and \eqref{bound_RRj}, we get
\begin{eqnarray*}
E|\eta_j|^4&=&\dfrac{1}{n^4}E|{\bf a}_j^T{\bf R}_{nj}^{-1}{\bf b}_j|^4
=\dfrac{\sigma_n^4}{n^4}E|{\bf a}_j^T{\bf R}_{nj}^{-1}{\bf \pep}_j|^4\\
&=&\dfrac{\sigma_n^4}{n^4}E\left({\bf \pep}_j^T\bar{{\bf R}}_{nj}^{-1}{\bf a}_j{\bf a}_j^T{\bf R}_{nj}^{-1}{\bf \pep}_j\right)^2\\
&\le&\dfrac{2\sigma_n^4}{n^4}\left(
E|{\bf \pep}_j^T\bar{{\bf R}}_{nj}^{-1}{\bf a}_j{\bf a}_j^T{\bf R}_{nj}^{-1}{\bf \pep}_j-{\bf a}_j^T{\bf R}_{nj}^{-1}\bar{{\bf R}}_{nj}^{-1}{\bf a}_j|^2
+E({\bf a}_j^T{\bf R}_{nj}^{-1}\bar{{\bf R}}_{nj}^{-1}{\bf a}_j)^2
\right)\\
&\le&\dfrac{C}{n^4}E|\ep_{11}|^4\times E\left({\bf a}_j^T{\bf R}_{nj}^{-1}\bar{{\bf R}}_{nj}^{-1}{\bf a}_j\right)^2\\
&\le& \dfrac{C(\log n)^6}{n^2(v-t_{n2}\si_n^2)^4}.
\end{eqnarray*}

Finally, we prove \eqref{ww}.
Using \eqref{asm:eps_bd}, \eqref{bound_RRj},  \eqref{Bbound}, Lemma \ref{xtrx} and Lemma~2.6 in \cite{SB95}, we obtain
\begin{eqnarray*}
&&E\left|w_j-\dfrac{\sigma_n^2}{n}{\rm tr}({\bf R}_n^{-1})\right|^4\\
&\le& C\(E\left|\dfrac{\sigma_n^2}{n}\pep_j^T{\bf R}_{nj}^{-1}\pep_j-\dfrac{\sigma_n^2}{n}{\rm tr}({\bf R}_{nj}^{-1})\right|^4
+E\left|\dfrac{\sigma_n^2}{n}{\rm tr}({\bf R}_{nj}^{-1}-{\bf R}_n^{-1})\right|^4\)\\
&\le&\dfrac{C}{n^4}\left|E\((\log n)^4{\rm tr}\({\bf R}_{nj}^{-1}\bar{{\bf R}}_{nj}^{-1}\)\)^2
+(\log n)^8 E{\rm tr}\({\bf R}_{nj}^{-1}\bar{{\bf R}}_{nj}^{-1}\)^2\right|
+\dfrac{C}{n^4(v-t_{n2}\si_n^2)^4}\\
&\le&\dfrac{C(\log n)^8}{n^2(v-t_{n2}\sigma_n^2)^4}.
\end{eqnarray*}
The result for $\hat{w}_j$ can be proved similarly.
\end{proof}

\subsection{Proof of Proposition \ref{prop:LSD_signal_noise}}
\label{ssec:pf_prop_LSD_signal_noise}
\begin{proof}[Proof of Proposition \ref{prop:LSD_signal_noise}] First, the existence and uniqueness of $t_n$ have been established in Lemma \ref{lem:tn_exist}.

Next, to show \eqref{eqn:prop_LSD_signal_noise}, we recall the $\De_j, j=1,2,3$ defined in \eqref{eq:Delta}.
The proof is completed if we show $\De_j\to 0$ almost surely for all $j=1,2,3$.

By
\eqref{bound_RRj},
\eqref{asm:A_bd}
and Lemma \ref{4Lems} \eqref{Bbound}, there exists a constant $C$ such that
\begin{eqnarray}
~~~
\max_{j=1,\ldots,n}  |\rho_j| \le \ \dfrac{C\log(n)}{v-t_{n2}\sigma_n^2}, \q \mbox{and}\q
\max_{j=1,\ldots,n} |\hat{\rho}_j| \le \ \dfrac{C\log(n)}{v(v-t_{n2}\sigma_n^2)}.
\label{rhoj}
\end{eqnarray}
Moreover, by Lemmas  \ref{lem:tlimit} and \ref{lem:t_properties} and  the convergence of $\{F^{\S_n}\}$, we have that as $p\to\infty$,
\begin{eqnarray}\label{eqn:de_conv}
\de_n=y_n\si_nm_n(z-t_n\si_n^2) ~ \to ~ \de=\de(z)=y\si^2m(z-t\si^2),
\end{eqnarray}
and $\Im(\de)>0$.
In particular, for all sufficiently large $n$, we have
\begin{eqnarray}
\dfrac{1}{ \ |1+\delta_n| \ } \ \le \ \dfrac{2}{\liminf_n\Im(\de_n)} <\infty.
\label{delta0}
\end{eqnarray}

We now show that $\De_3\to 0$ almost surely.
Using Markov's inequality and H\"{o}lder's inequality, for any $\eps>0$, we have
\begin{eqnarray*}
{\rm P}\(|\De_3|\geq \eps\)&\le& \dfrac{1}{\eps^4} E
\left|\dfrac{1}{p}\sum_{j=1}^n\beta_j \(\hat{w}_j-\dfrac{\sigma_n^2}{n}{\rm tr}({\bf R}_n^{-1}{\bf B}_n^{-1})\)\right|^4\\
&\le& \dfrac{n^3}{p^4\eps^4}\sum_{j=1}^nE|\beta_j|^4
\left|\hat{w}_j-\dfrac{\sigma_n^2}{n}{\rm tr}({\bf R}_n^{-1}{\bf B}_n^{-1})\right|^4\\
&\le&\dfrac{C(\log n)^8}{n^2\eps^4v^4(v-t_{n2}\si_n^2)^8}\cdot|z-t_n\si_n^2|^4,
\end{eqnarray*}
where the last step follows from Lemma \ref{4Lems}  \eqref{betabound} and \eqref{ww}.
Thus, $\De_3\to 0$ almost surely by Lemmas \ref{lem:tlimit} and   \ref{lem:t_properties} and the Borel-Cantelli Lemma.

Similarly we can prove that $\De_j\to 0$ almost surely for $j=1,2$ by using Lemmas \ref{lem:tlimit}, \ref{lem:t_properties}, and \ref{4Lems} and inequalities (\ref{rhoj}) and (\ref{delta0}).
\end{proof}


\subsection{Proof of Theorem \ref{thm:LSD_signal_noise}}
\label{ssec:pf_thm_LSD_signal_noise}

\begin{proof}[Proof of Theorem \ref{thm:LSD_signal_noise}]

We first show that equation (1.1) in \cite{DS2007a} can be derived from Proposition \ref{prop:LSD_signal_noise}. For any fixed $z\in\bC^*$, by Proposition~\ref{prop:LSD_signal_noise}, Lemmas \ref{lem:tlimit}, \ref{lem:t_properties}, and Lemma \ref{4Lems} \eqref{Bbound},  and the dominated convergence theorem, we obtain
\begin{eqnarray}\label{rem}
m(z-t\si^2) \ = \ \int\dfrac{1}{(1+\de)^{-1}x-z} \  dF^{\cA}(x),
\end{eqnarray}
where $t$ is the unique solution to equation \eqref{eqn:t}, $\de=y\si^2m(z-t\si^2)$,
and $m(\cdot)$ is the Stieltjes transform of the probability distribution $F$.
Moreover, if we let $\ga(z)=z-t(z)\si^2$, then by~\eqref{eqn:t}  and \eqref{eqn:de_conv}, we have
$$t=y-1+y\ga m(\ga), ~~~~~~~~~~~\de=y\si^2m(\ga),$$
and
\[
z=\ga+t\si^2=\ga+\ga y\si^2 m(\ga)+\si^2(y-1).
\]
Substituting the expressions of $t$, $\de$, and $z$ in terms of $\ga$ into equation (\ref{rem}) yields
\begin{equation}\label{rem_limit}
m(\ga)=\int \dfrac{dF^{\cA}(x)}{\dfrac{x}{1+y\si^2m(\ga)}-\ga(1+y\si^2m(\ga))-\si^2(y-1)},
\end{equation}
where $\ga\in\bC_{\ga}:=\{\ga=z-t(z)\si^2: z\in\bC^*\}$.  Note further that by Lemma~\ref{lem:t_analytic}, $\ga(z)$ is analytic on $\bC^*$. It then follows from the uniqueness of analytic continuation that equation (\ref{rem_limit}) holds for every $\ga\in\bC^+$ -- in other words, equation~(1.1) in \cite{DS2007a} holds.

In the following, we show that equation (\ref{eqn:LSD_signal_to_noisy}) in Theorem \ref{thm:LSD_signal_noise} holds.

For any $z\in\bC^*$, denote $\al(z)=z(1+\de(z))$, where  $\de(z)=y\si^2 m(\ga)$ and $\ga=z-t\si^2$.  We further define
\[
d(\ga)=1+y\si^2m(\ga) (=1+\de(z)), ~~~~{\rm and}~~~~ g(\al)=1-y\si^2m_{\cA}(\al).
\]
We  show the following facts:
\begin{compactenum}\setcounter{enumi}{6}
\item[]
\begin{compactenum}
\item\label{F1} $g(\al)=1/d(\ga)$,
\item\label{F2} $\al=\ga d^2(\ga)+\si^2(y-1)d(\ga)$, or equivalently, \\ $\ga=\al g^2(\al)-\si^2(y-1)g(\al)$.
\end{compactenum}
\end{compactenum}

In fact, we can rewrite equation \eqref{rem} as
\begin{eqnarray*}
m_{\cA}\(\al\)
=(1+\de)^{-1} m(\ga).
\end{eqnarray*}
Noting that $\de=y\si^2m(\ga),$
we have
\[
y\si^2 m_{\cA}(\al)=\dfrac{\de}{1+\de}, \mbox{ and hence } g(\al)=\frac{1}{1+\delta} = \frac{1}{d(\ga)},
\]
namely, \eqref{F1} holds.
In addition, $y\si^2m_{\cA}(\al)=1-{1}/{(1+\de)}$ implies $\al\in\bC^+$  because
$\de=y\si^2 m(z-t\si^2)\in \bC^+$ by Lemma \ref{lem:t_properties}.

We now show \eqref{F2}.
Let $\beta=t\si^2(1+\de)$. Then
\begin{eqnarray}\label{ga}
\ga=z-t\si^2=\dfrac{\al-\beta}{1+\de}.
\end{eqnarray}
By substituting \eqref{ga} and $\de=y\si^2 m(\ga)$ into equation \eqref{eqn:t}, we obtain
\[
\dfrac{\beta}{\si^2(1+\de)}=y-1+\dfrac{\de(\al-\beta)}{\si^2(1+\de)}.
\]
That is,
\[
\beta=\si^2(y-1)+\dfrac{\de}{1+\de}\al.
\]
Therefore,
\begin{eqnarray*}
\ga&=&\dfrac{\al-\beta}{1+\de}
= \dfrac{\al}{(1+\de)^2}-\dfrac{\si^2(y-1)}{1+\de}\\
&=&\al g^2(\al)- \si^2(y-1)g(\al),
\end{eqnarray*}
namely,  \eqref{F2} holds.

Next, by \eqref{rem_limit} and the definitions of $\al$ and $d(\ga)$ and \eqref{F2}, we have
\begin{eqnarray*}
m(\ga)
&=&d(\ga)\int\dfrac{1}{x-\al}dF^{\cA}(x).
\end{eqnarray*}
Using the facts
\eqref{F1} and \eqref{F2},
we obtain
\begin{equation}\label{g_a}
\aligned
m_{\cA}(\al)&=\int\dfrac{dF^A(x)}{x-\al}
=\dfrac{1}{d(\ga)}\int\dfrac{1}{\tau-\ga}dF(\tau) \\
&=\int\dfrac{g(\al)}{\tau-\al g^2(\al)+\si^2(y-1)g(\al)} dF(\tau) \\
&=\int\dfrac{1}{\dfrac{\tau}{g(\al)}-\al g(\al)+\si^2(y-1)} dF(\tau).
\endaligned
\end{equation}
By plugging in the expression of $g(\al)$,  we see that for all
$\al=\al(z)=z(1+\de(z))$,
$m_{\cA}(\al)$ satisfies
\[
~~~~~~~ m_{\cA}(\al)=\int\dfrac{dF(\tau)}{\dfrac{\tau}{1-y\si^2 m_{\cA}(\al)}-\al(1-y\si^2 m_{\cA}(\al))+\si^2(y-1)}.
\]
It follows from the uniqueness of analytic continuation that the above equation holds for all $\al\in\bC^+$ such that
$m_{\cA}(\al)\in D_{\cA}.$

It remains to show that the solution to equation \eqref{eqn:LSD_signal_to_noisy} is unique in $D_{\cA}$.
Suppose that $m\in D_{\cA}$ satisfies equation \eqref{eqn:LSD_signal_to_noisy}.  Define
\begin{eqnarray}\label{ga_j}
\ga=z(1-y\si^2m )^2-\si^2(y-1)(1-y\si^2 m)\in \bC^+.
\end{eqnarray}
By \eqref{eqn:LSD_signal_to_noisy} and \eqref{ga_j}, we have
$
m=(1-y\si^2m)m(\ga).
$
Hence,
\begin{eqnarray}\label{m_ga_j}
m=\frac{m(\ga)}{1+y\si^2m(\ga)}, \q\mbox{and}\q  m(\ga)=\dfrac{m}{1-y\si^2m}.
\end{eqnarray}
The second identity implies that
\begin{eqnarray}\label{mm_ga_j}
1+y\si^2m(\ga)=\dfrac{1}{1-y\si^2m}.
\end{eqnarray}
Using \eqref{ga_j} and \eqref{mm_ga_j}, we have
\begin{eqnarray}\label{al_ga_j}
z&=&\dfrac{\ga}{(1-y\si^2m)^2}+\dfrac{\si^2(y-1)}{1-y\si^2m}\nonumber\\
&=&\ga(1+y\si^2m(\ga))^2+\si^2(y-1)(1+y\si^2m(\ga)).
\end{eqnarray}
Because the Stieltjes transform $m(\ga)$ is uniquely determined by equation~\eqref{rem_limit},  we obtain
\begin{eqnarray*}
m(\ga)
&=&\int\dfrac{dF^{\cA}(x)}{\dfrac{x}{1+y\si^2m(\ga)}-\ga(1+y\si^2m(\ga))-\si^2(y-1)}\\
&=&(1+y\si^2m(\ga)) \cdot m_{\cA}(z).
\end{eqnarray*}
It then follows from the first identity in \eqref{m_ga_j}  that $m = m_{\cA}(z)$ -- in other words, $m_{\cA}(z)$ is the unique solution in $D_{\cA}$.
\end{proof}


\section{Proof of Theorem~\ref{THM2}}

\begin{proof}[Proof of  Theorem \ref{thm:main}]
The convergence of $F^{\ICV}$ follows easily from Assumption~\eqref{asm:Sigma_conv} and the fact that
\[
F^{\ICV}(x) \ = \ F^{\bpSi} \left(\frac{x}{\int_0^1\ga_t^2\,dt}\right) ~~~~~~~ {\rm for~all~} x\geq 0.
\]

Next, by Theorem 3.2 in \cite{DS2007b}, the assumption that $F$ has a bounded support implies that $H$ also has a bounded support. Thus, Assumption (A.iii$'$) in \cite{ZL11}, which requires that $H$ has a finite second moment, is satisfied.

We proceed to show the convergence of $\cA_m$.
As discussed in
Section  \ref{ssec:App_I_2},
if the diffusion process $({\bf X}_t)$ belongs to Class~$\mathcal{C}$, the drift process ${\pmb \mu}_t\equiv 0$, and $(\gamma_t)$ is independent of $({\bf W}_t)$, then, conditional on $\{\gamma_t\}$, we have
\begin{eqnarray}
\Delta\overline{{\bf X}}_{2i} \ \eqD \
\sqrt{w_i}
\ \breve{{\pmb\Sigma}}^{1/2} \ {\bf Z}_i,
\label{rx}
\end{eqnarray}
where $w_i$ is as in \eqref{wie}
and is independent of ${\bf Z}_i$, and ${\bf Z}_i=(Z_i^{1},\ldots,Z_i^{p})^T$ consists of independent standard normals.
Hence, $\cA_m$ has the same distribution as  $\wt{\cA}_m$ defined as
\begin{eqnarray}
\wt{\cA}_m &=& 3 \sum_{i=1}^m w_i \ \breve{\pSi}^{1/2} \Z_i \Z_i^T \breve{\pSi}^{1/2}.
\label{Sm}
\end{eqnarray}

{\bf Claim 1}. Without loss of generality, we can assume that the drift process ${\pmb\mu}_t\equiv 0$ and $(\gamma_t)$ is independent of $({\bf W}_t)$.

First, whether the drift term $({\pmb\mu}_t)$ vanishes or not does not affect the LSD of $\cA_m$.
To see this, note that $\De\ol{\X}_{2i}=\wt{\V}_i+\wt{\Z}_i$, where
\begin{eqnarray}\label{Vi}
\wt{\V}_i= \sum_{|j|<k}\(1-\dfrac{|j|}{k}\)\int_{((2i-1)k+j-1)/n}^{((2i-1)k+j)/n} {\pmb\mu}_t\, dt,
\end{eqnarray}
and
\begin{eqnarray}\label{Zi}
\wt{\Z}_i=\pLa \cdot \sum_{|j|<k}\(1-\dfrac{|j|}{k}\) \int_{((2i-1)k+j-1)/n}^{((2i-1)k+j)/n} \ga_t\, d\W_t.
\end{eqnarray}
Because all the entries of $\wt{\V}_i$ are of order $O(k/n)=o(1/\sqrt{p})$, by  Lemma \ref{lemma1} in  Appendix \ref{appendix:lemmas},
$\cA_m$ and
$ 3\sum_{i=1}^m \wt{\Z}_i (\wt{\Z}_i)^T$
have the same LSD.

Next, by the same argument as in the beginning of the Proof of Theorem~1 in \cite{ZL11}, we can assume without loss of generality that $(\ga_t)$ is independent of $(\W_t)$.
It follows that $\cA_m$ and $\wt{\cA}_m$ have the same LSD.

{\bf Claim 2}.  $\max_{i,n} (mw_i)$ is bounded, and  there exists a piecewise continuous process $(w_s)$ with finitely many jumps such that
\begin{eqnarray}\label{eqn:wt_conv}
\lim_{n\to\infty}\sum_{i=1}^m\int_{((2i-2)k)/n}^{2ik/n} \ | 3 mw_i-w_s| \ ds=0.
\end{eqnarray}

Using the boundedness of $(\ga_t)$ assumed in \eqref{asm:gamma_conv}   and  $k=\lfloor\theta\sqrt{n}\rfloor$, one can easily show that  $\max_{i,n} (mw_i)$ is bounded.

Next we show that \eqref{eqn:wt_conv} is satisfied for
$w_s=(\ga_s^*)^2$.
Define
\begin{eqnarray*}
w_i^*&=&
\sum_{|j|<k} \(1-\dfrac{|j|}{k}\)^2 ~
 \int_{((2i-1)k+j-1)/n}^{((2i-1)k+j)/n} (\ga_t^*)^2 \, dt.
\end{eqnarray*}

Suppose that $(\ga_t^*)$ has $J$ jumps for $J\geq 1$. For each $j=1,\ldots,J$, there exists an $\ell_j$ such that the $j$th jump falls in the interval $[{(2\ell_j-2)k}/{n}, \ {(2\ell_j k)}/{n})$. Then
\begin{eqnarray*}
&& \sum_{i=1}^m\int_{((2i-2)k)/n}^{2ik/n} \ | 3 mw_i-w_s| \ ds\\
&=&\sum_{\ell_j\in\{\ell_1,\ldots,\ell_J\}}
\int_{((2\ell_j-2)k)/n}^{2\ell_jk/n} \ | 3 mw_{\ell_j}-w_s| \ ds\\
&&+\sum_{i\not\in\{\ell_1,\ldots,\ell_J\}}
\int_{((2i-2)k)/n}^{2ik/n} \ | 3 mw_i-w_s| \ ds\\
&:=&\De_1+\De_2.
\end{eqnarray*}
Because  $(mw_{\ell_j})$ and $|\ga_s^*|$ are both bounded, for any $\varepsilon>0$ and for any sufficiently large $n$, we have
\[
|\De_1|\le \dfrac{2k}{n}\cdot JC < \varepsilon.
\]
For the second term $\De_2$, because $(\ga_t^*)$ is continuous in $[{(2i-2)k}/{n},{(2ik)/{n}}]$ when $i\not\in\{\ell_1,\ldots,\ell_J\}$, and by \eqref{asm:gamma_conv}, $(\ga_t)$ uniformly converges to $(\ga_t^*)$,  for any $\varepsilon>0$ and for sufficiently large $n,p$, we have
\[
|\ga_t^*-\ga_{(2i-2)k/n}^*|<\varepsilon \mbox{ for all } t\in \left[\frac{(2i-2)k}{n},{\frac{2ik}{n}}\right], \mbox{ and } |\ga_t-\ga_t^*|<\varepsilon\mbox{ for all } t.
\]
Moreover, because $|\ga_t|\le C_2$, for all large $n$, we have
\begin{eqnarray*}
|\De_2|
&\le& \sum_i\int_{(2i-2)k/n}^{2ik/n} | 3mw_i- 3 mw_i^*|ds \\
&+&\sum_i\int_{(2i-2)k/n}^{2ik/n} \left| 3 mw_i^*-  \frac{ 3 m}{n}  \(\ga_{(2i-2)k/n}^*\)^2 \cdot  \sum_{|j|<k} \(1-\dfrac{|j|}{k} \)^2 \right| ds\\
&+&\sum_i\int_{(2i-2)k/n}^{2ik/n} \left| \frac{ 3 m}{n}   \(\ga_{(2i-2)k/n}^*\)^2\cdot  \sum_{|j|<k} \(1-\dfrac{|j|}{k} \)^2 -(\ga_s^*)^2 \right| ds\\
&\le&  3 m^2\cdot\dfrac{2k}{n}\cdot  \frac{2k(k+1)(2k+1)}{6k^2}\cdot \frac{(2C_2\varepsilon)}{n}\\
&+& 3 m^2\cdot\dfrac{2k}{n}\cdot \frac{2k(k+1)(2k+1)}{6k^2}\cdot \frac{(2C_2\varepsilon)}{n}\\
&+& \sum_i\int_{(2i-2)k/n}^{2ik/n}  \(\ga_{(2i-2)k/n}^*\)^2 ds\cdot \left| \frac{ 3 m}{n}  \cdot  \sum_{|j|<k} \(1-\dfrac{|j|}{k} \)^2 -1 \right|\\
&+&
\sum_i\int_{(2i-2)k/n}^{2ik/n} \left|\(\ga_{(2i-2)k/n}^*\)^2-(\ga_s^*)^2\right| ds\\
&\le& C\varepsilon.
\end{eqnarray*}
This completes the proof of \eqref{eqn:wt_conv}.

Finally, because $F^{\breve{\pSi}}\to\breve{H}$ and $\breve{H}(x/\zeta)=H(x)$ for $x\geq 0$, using Claim~2 and applying Theorem 1 in \cite{ZL11}, we conclude that
the ESD of~$\cA_m$ converges to $F^{\cA}$, whose Stieltjes transform satisfies
\begin{eqnarray}
m_{\cA}(z)&=&-\dfrac{1}{z}\int\dfrac{1}{\tau M(z)+1}d\breve{H}(\tau) \nonumber\\
&=&-\dfrac{1}{z}\int\dfrac{\zeta}{\tau M(z)+\zeta}dH(\tau),
\label{m_A}
\end{eqnarray}
where $M(z)$, together with another function $\wt{m}(z)$, uniquely solve the following equations in $\bC^+\times \bC^+$
\[
\left\{
\begin{array}{lll}
M(z) &=& -\dfrac{1}{z} \displaystyle\int_0^1 \dfrac{w_s}{1+y\wt{m}(z) w_s}ds,  \\
\wt{m}(z) &=& -\dfrac{1}{z} \displaystyle\int \dfrac{\tau}{\tau M(z)+1} d\breve{H}(\tau)
{=-\dfrac{1}{z} \displaystyle\int \dfrac{\tau}{\tau M(z)+\zeta} dH(\tau)}.
\end{array}
\right.
\]

\end{proof}


\section{Proof of  Theorem \ref{THM3}}

The convergence of the ESD of $\ICV$ has been proved in Theorem \ref{thm:main}. The rest of Theorem \ref{thm:B_n} is a direct consequence of the following two convergence results.

\begin{lem}\label{pthm3_a}
Under the assumptions of Theorem \ref{thm:B_n}, we have
\[
\lim_{p\to\infty} 3 \dfrac{\sum_{i=1}^m |\De\ol{\Y}_{2i}|^2}{p}
=\zeta, \q \mbox{almost surely}.
\]
\end{lem}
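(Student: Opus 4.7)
The plan is to decompose
\[
|\De\ol{\Y}_{2i}|^2 = |\De\ol{\X}_{2i}|^2 + 2(\De\ol{\X}_{2i})^T \De\ol{\pvep}_{2i} + |\De\ol{\pvep}_{2i}|^2,
\]
and reduce the claim to two convergences: (a) the signal convergence $(3/p)\sum_{i=1}^m |\De\ol{\X}_{2i}|^2 \to \zeta$ and (b) the noise decay $(1/p)\sum_{i=1}^m |\De\ol{\pvep}_{2i}|^2 \to 0$, both almost surely. The cross term is then bounded by the geometric mean of the other two via Cauchy--Schwarz, and tends to zero once (a) and (b) are established.

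For (b), write $\De\ol{\pvep}_{2i} = \ol{\pvep}_{2i} - \ol{\pvep}_{2i-1}$, a linear combination of $2k$ noise vectors with coefficients of size $O(1/k)$. Under the stationarity, bounded $4\ell$th moments, and $\rho$-mixing of Assumption \eqref{asm:eps_general}, a standard mixing bound gives $E|\De\ol{\pvep}_{2i}|^2 = O(p/k)$ uniformly in $i$, so $E\,(1/p)\sum_i |\De\ol{\pvep}_{2i}|^2 = O(m/k) = O(n^{1-2\alpha}) \to 0$ since $\alpha > 1/2$ by \eqref{asm:ym_conv}. The $4\ell$th-moment bound together with $\rho$-mixing controls the variance, and Chebyshev plus Borel--Cantelli upgrade the $L^1$ statement to almost-sure convergence.

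The main work is (a). First, by \eqref{asm:mu_bdd} the drift contribution to $\De\ol{\X}_{2i}$ has entries of order $O(k/n)$, hence contributes a negligible $o(1)$ term to $(1/p)\sum_i |\De\ol{\X}_{2i}|^2$. Second, by the reduction of Claim~1 in the proof of Theorem~\ref{thm:main} (which applies since $|\mathcal{I}_p|=O(p^{\de_1})=o(p)$ by \eqref{asm:leverage_2}), we may assume $(\gamma_t)$ is independent of $(\W_t)$. Third, using the triangular-kernel representation \eqref{DelV}, write $\De\ol{\X}_{2i} = \pLa Y_i$, where conditional on $(\gamma_t)$ the $Y_i$'s are independent centered Gaussians with covariance $w_i\,\I$ and $w_i$ as in \eqref{wie}. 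Taking conditional expectations and using the convention $\tr(\breve{\pSi})=p$ yields
\[
\tfrac{3}{p}\,E\!\left[\sum_{i=1}^m |\De\ol{\X}_{2i}|^2 \,\Big|\, (\gamma_t)\right] \;=\; 3\sum_{i=1}^m w_i \;\longrightarrow\; \zeta \q \mbox{a.s.},
\]
by Claim~2 in the proof of Theorem~\ref{thm:main} combined with the uniform convergence $\gamma_t \to \gamma_t^*$ from \eqref{asm:gamma_conv}.

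To upgrade this from $L^1$ to almost sure, compute the conditional variance $\mbox{Var}(|\De\ol{\X}_{2i}|^2 \mid (\gamma_t)) = 2w_i^2\,\tr(\breve{\pSi}^2) \le C w_i^2 p^{1+2\de_2}$ via Gaussian quadratic-form formulas, Assumption \eqref{asm:Sigma_bdd_2}, and the boundedness of $\gamma$. Summing in $i$ and dividing by $p^2$ produces a total variance of order $n^{2\alpha-2+2\de_2(1-\alpha)} \to 0$ since $\de_2<1/2$ and $\alpha<1$; Gaussian concentration (e.g.\ Hanson--Wright) sharpens this to exponential tails so that Borel--Cantelli yields the conclusion. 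The main obstacle is controlling the leverage-induced coupling in Claim~1's reduction: the sub-$\sqrt{p}$ size of $\mathcal{I}_p$ together with $\|\breve{\pSi}\|\le C p^{\de_2}$ must jointly absorb the residual mean-and-variance errors, which is exactly the joint budget $\de_1+\de_2<1/2$ built into the assumptions of Theorem~\ref{thm:B_n}.
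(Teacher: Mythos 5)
Your decomposition and overall architecture match the paper's proof exactly: write $|\De\ol{\Y}_{2i}|^2$ as signal plus cross plus noise, establish (a) $\frac{3}{p}\sum_i |\De\ol{\X}_{2i}|^2 \to \zeta$ and (b) $\frac{1}{p}\sum_i|\De\ol{\pvep}_{2i}|^2\to 0$ almost surely, and kill the cross term by Cauchy--Schwarz. Your treatment of (a) --- drift negligible, reduction to $(\gamma_t)\perp(\W_t)$, conditional Gaussianity giving $|\wt\Z_i|^2/p\approx w_i$, and Riemann sum $3\sum_i w_i\to\zeta$ --- is sound and essentially what the paper does, with your Hanson--Wright step replicating the uniform convergence $\max_i\bigl|\tfrac1p|\pLa\Z_i|^2-1\bigr|\to 0$ that the paper already secured in \eqref{eq:mv_normal_norm} (from the proof of Proposition~\ref{pthm3}).

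The gap is in (b). You claim that the $4\ell$th-moment bound together with $\rho$-mixing ``controls the variance'' so that Chebyshev plus Borel--Cantelli upgrade the $L^1$ statement. But Assumption \eqref{asm:eps_general} imposes \emph{no} cross-sectional dependence restriction across $j=1,\ldots,p$. The variance of $\tfrac1p\sum_{i,j}|\De\ol\vep_{2i}^j|^2$ therefore involves $\sum_{j,j'}\cov(|\De\ol\vep_{2i}^j|^2,|\De\ol\vep_{2i'}^{j'}|^2)$ with $p^2$ pairs and no decay in $|j-j'|$; the generic bound is thus $O\bigl((m/k)^2\bigr)=O(n^{2-4\alpha})$. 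For that to be Borel--Cantelli summable you would need $\alpha>3/4$, yet Assumption~\eqref{asm:ym_conv} only requires $\alpha\ge (3+\ell)/(2\ell+2)$, which tends to $1/2$ as $\ell\to\infty$. So the Chebyshev route can genuinely fail within the admissible parameter range. The paper sidesteps this by never touching the variance of the cross-sectionally aggregated quantity: it proves the uniform maximal bound $\max_{i,j}\sqrt{p}\,|\De\ol\vep_{2i}^j|\to 0$ a.s.\ (equation \eqref{lim_eps}) via Markov's inequality with the $2\ell$th moment estimate \eqref{dev_lim_eps} and a union bound over $mp$ coordinates --- this is exactly what the threshold $(3+\ell)/(2\ell+2)$ is calibrated to. Once you have that uniform bound, $\tfrac1p\sum_i|\De\ol\pvep_{2i}|^2\le \tfrac{m}{p}\cdot o(1)\to 0$ trivially, with no variance computation and no dependence on cross-sectional structure. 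You should replace your Chebyshev step with this maximal-inequality argument (or simply invoke \eqref{lim_eps}, which is established in the course of proving Proposition~\ref{pthm3}).
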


\begin{prop}\label{pthm3}
Under the assumptions of Theorem \ref{thm:B_n},
 $F^{\widetilde{\pSi}}$ converges almost surely, and the limit $\widetilde{F}$
is determined by $\breve{H}$ in that its Stieltjes transform $m_{\widetilde{F}}(z)$ satisfies the following equation
\begin{equation}\label{eqn:B_n}
m_{\widetilde{F}}(z) \ = \ \int_{\tau\in\mathbb{R}}\dfrac{1}{\tau\(1-y(1+zm_{\widetilde{F}}(z))\)-z} \ d\breve{H}(\tau), ~~~ {\rm for~ all}~ z\in\mathbb{C}^+.
\end{equation}
\end{prop}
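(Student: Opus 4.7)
\textbf{Plan for the proof of Proposition \ref{pthm3}.}
The plan is to reduce $\widetilde{\pSi}$ to a standard sample covariance matrix of Gaussian vectors with population covariance $\breve{\pSi}$, and then invoke the classical Mar\v{c}enko--Pastur theorem. The cornerstone is the simple but powerful observation that dividing each outer product by its own trace will cancel the (stochastic and time-varying) scalar multipliers $w_i$ that appear in the representation of $\De\ol{\X}_{2i}$, thereby eliminating \emph{all} temporal heterogeneity.

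\textbf{Step 1 (noise removal).} First I would replace $\De\ol{\Y}_{2i}$ by $\De\ol{\X}_{2i}$ in both numerator and denominator of each summand in \eqref{tSi}. Under the window choice $k=\lfloor\theta n^{\alpha}\rfloor$ with $\alpha>1/2$, one has $|\De\ol{\X}_{2i}|^2 = O_p(pk/n)$ and $|\De\ol{\pvep}_{2i}|^2 = O_p(p/k)$, so the signal dominates the noise coordinate-wise and in aggregate because $k^2 \gg n$. The precise quantification has to respect Assumption \eqref{asm:eps_general} (general $\rho$-mixing in the temporal direction, no restriction on cross-sectional dependence, and possible dependence with $(\X_t)$). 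I would use a truncation of $(\pvep_i^j)$ together with the $\rho$-mixing moment bounds (as in \cite{JLZ_noise}) to control cross terms of the form $(\De\ol{\X}_{2i})^T \De\ol{\pvep}_{2i}$ and $|\De\ol{\pvep}_{2i}|^2$ uniformly in $i$. This yields
\[
\max_{1\le i\le m} \left\| \frac{\De\ol{\Y}_{2i}(\De\ol{\Y}_{2i})^T}{|\De\ol{\Y}_{2i}|^2} - \frac{\De\ol{\X}_{2i}(\De\ol{\X}_{2i})^T}{|\De\ol{\X}_{2i}|^2} \right\| \toop 0,
\]
so that the difference between $\widetilde{\pSi}$ and its signal-only analogue has vanishing operator norm and hence asymptotically negligible effect on the ESD.

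\textbf{Step 2 (reduction to Class $\mathcal{C}$ with zero drift and independent $(\gamma_t)$).} Just as in Claim 1 of the proof of Theorem \ref{thm:main}, Assumption~\eqref{asm:mu_bdd} lets me drop the drift (each entry contributes $O(k/n)$, which is negligible after normalization), and the argument from Theorem 1 of \cite{ZL11} lets me assume without loss of generality that $(\ga_t)$ is independent of $(\W_t)$. Then conditionally on $(\ga_t)$ one has the exact representation $\De\ol{\X}_{2i} \eqD \sqrt{w_i}\, \breve{\pSi}^{1/2}\Z_i$ with $\Z_i$ independent standard Gaussian vectors and $w_i$ as in \eqref{wie}, whence
\[
\frac{\De\ol{\X}_{2i}(\De\ol{\X}_{2i})^T}{|\De\ol{\X}_{2i}|^2} \eqD \frac{\breve{\pSi}^{1/2}\Z_i\Z_i^T\breve{\pSi}^{1/2}}{\Z_i^T\breve{\pSi}\Z_i}.
\]
The dependence on $w_i$ (and therefore on $(\ga_t)$) is gone -- this is the key pay-off of the normalization.

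\textbf{Step 3 (concentration and Mar\v{c}enko--Pastur).} Next I would show $\Z_i^T\breve{\pSi}\Z_i / p \to 1$ uniformly in $i=1,\ldots,m$, almost surely. Since $\E[\Z_i^T\breve{\pSi}\Z_i] = \tr(\breve{\pSi}) = p$ and $\mathrm{Var}(\Z_i^T\breve{\pSi}\Z_i) = 2\tr(\breve{\pSi}^2) \le 2 p \|\breve{\pSi}\| \le 2C_3 p^{1+\delta_2}$, a Hanson--Wright-type inequality together with a union bound over $i\le m = O(p)$ and the Borel--Cantelli lemma provides the uniform convergence (it is here that the upper bound $\delta_2<1/2$ of Assumption \eqref{asm:Sigma_bdd_2} is used to keep $\|\breve{\pSi}\|$ under control). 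Combined with Step 2, this gives that $\widetilde{\pSi}$ has the same LSD as
\[
\frac{1}{m}\sum_{i=1}^m \breve{\pSi}^{1/2}\Z_i\Z_i^T\breve{\pSi}^{1/2},
\]
a standard sample covariance matrix with population $\breve{\pSi}$ and $p/m\to y$. The classical Mar\v{c}enko--Pastur theorem \cite{MP67,SB95} yields almost sure convergence of its ESD to a distribution $\wt{F}$ whose Stieltjes transform satisfies \eqref{eqn:B_n}, because $F^{\breve{\pSi}}\toD \breve{H}$ by Assumption \eqref{asm:Sigma_conv}.

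\textbf{Main obstacle.} The technical bottleneck is Step 1. Standard signal-plus-noise arguments work comfortably when signal and noise are added; here we must control a \emph{ratio} whose denominator can in principle be small, and simultaneously allow noise that is $\rho$-mixing in time, arbitrarily cross-sectionally dependent, and even dependent on the price. Establishing the needed uniform lower bound on $|\De\ol{\X}_{2i}|^2$ (so the denominator stays away from zero) together with a sharp upper bound on $|\De\ol{\pvep}_{2i}|^2$ under these weak mixing hypotheses is what requires genuinely new work; once past this, Steps 2--3 follow a fairly standard playbook.
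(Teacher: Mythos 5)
Your overall strategy — remove the noise first, use the self-normalization to cancel the $w_i$ factors, then reduce to a standard Gaussian sample covariance matrix and invoke Mar\v{c}enko--Pastur — is precisely the route the paper takes, and your Steps 2 and 3 match the paper's reliance on Claim 1 of the proof of Theorem \ref{thm:main} and on Proposition 8 of \cite{ZL11} respectively. The moment bound on $\De\ol{\vep}^j_{2i}$ under $\rho$-mixing (the paper uses a Marcinkiewicz--Zygmund-type inequality via Theorem 1 of \cite{DL99}) and the uniform lower bound on $|\De\ol{\X}_{2i}|^2$ (via Gaussian concentration and Assumption \eqref{asm:gamma_bdd}) are also the right ingredients.

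However, there is a genuine gap in the key inference you draw in Step 1. You claim that
$\max_{1\le i\le m}\|D_i\|\toop 0$, where $D_i$ is the difference of the two self-normalized rank-one projections, \emph{and therefore} $\|\tpSi-\widetilde{\tpSi}\|\to 0$. The second does not follow from the first: $\tpSi-\widetilde{\tpSi}=\frac{p}{m}\sum_{i=1}^m D_i$ is a sum of $m=O(p)$ matrices each of small operator norm, and such a sum can easily have operator norm of order $p\max_i\|D_i\|$, which need not vanish. The same over-reach occurs implicitly in your Step 3. The correct version of the argument uses that each $D_i$ has \emph{rank} at most two, so the nuclear norm of $\tpSi-\widetilde{\tpSi}$ is at most $\frac{2p}{m}\sum_{i}\|D_i\|\le 2p\max_i\|D_i\|=o(p)$, and then a Hoffman--Wielandt/Mirsky-type bound gives that the Wasserstein (hence L\'evy) distance between the two ESDs is $o(1)$; equivalently, one works directly with the condition that the entries of the perturbation vectors $\De\ol{\pvep}_{2i}/|\De\ol{\X}_{2i}|$ are uniformly $o(1/\sqrt{p})$. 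This is exactly what the paper does: it establishes \eqref{ep_x}, i.e.\ $\max_{i,j}\sqrt{p}|\De\ol{\vep}^j_{2i}|/|\De\ol{\X}_{2i}|\to 0$ almost surely, and then invokes Lemma 1 of \cite{ZL11} (a L\'evy-distance perturbation lemma for sample-covariance-type sums) together with the separate uniform control \eqref{prop1} of the denominators. So the core estimate you identified is the right one, but the "vanishing operator norm of the aggregate difference" is not true and is not the quantity that makes the ESDs coincide; it is the $o(1/\sqrt{p})$ entrywise bound, converted to a L\'evy-distance statement, that does the job. A minor further remark: the Proposition asserts almost-sure convergence, so your Borel--Cantelli step should deliver $\max_i\|D_i\|\to 0$ almost surely, not merely in probability.
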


We prove Proposition \ref{pthm3} first, and then give the proof of Lemma~\ref{pthm3_a}.

\begin{proof}[Proof of Proposition \ref{pthm3}]
We first show the convergence of $F^{\tSi}$.
The main reason that we choose~$k$ in such a way that $k/\sqrt{n}\to\infty$ is to make the noise term negligible. To be more specific, by choosing $k=\lfloor\th n^{\alpha}\rfloor$ for some $\alpha\in [(3+\ell)/(2\ell+2) ,1)$ where $\ell$ is the integer in Assumption \eqref{asm:eps_general},
 we shall show that
\[
\tpSi=y_m\sum_{i=1}^m\dfrac{\De\ol{\Y}_{2i}(\De\ol{\Y}_{2i})^T}{|\De\ol{\Y}_{2i}|^2}
~~~{\rm and }~~~
\widetilde{\tpSi}:=y_m\sum_{i=1}^m\dfrac{\De\ol{\X}_{2i}(\De\ol{\X}_{2i})^T}{|\De\ol{\X}_{2i}|^2}
\]
have the same LSD. This follow if we can show that
\begin{eqnarray}
\max_{i=1,\ldots,m} \left|\dfrac{|\De\ol{\Y}_{2i}|^2}{|\De\ol{\X}_{2i}|^2}-1\right| \to 0 ~~~~~ {\rm almost ~ surely},
\label{prop1}
\end{eqnarray}
and
\begin{eqnarray}
y_m\sum_{i=1}^m\dfrac{\De\ol{\Y}_{2i}(\De\ol{\Y}_{2i})^T}{|\De\ol{\X}_{2i}|^2}~~{\rm and ~~} \widetilde{\tpSi}~~~{\rm have~ the ~ same ~ LSD}.
\label{prop2}
\end{eqnarray}

We start with \eqref{prop1}. Because
\begin{eqnarray*}
\left|\dfrac{|\De\ol{\Y}_{2i}|^2}{|\De\ol{\X}_{2i}|^2}-1\right|
&=&\left|\dfrac{|\De\ol{\X}_{2i}|^2+|\De\ol{\pvep}_{2i}|^2+2(\De\ol{\X}_{2i})^T(\De\ol{\pvep}_{2i})}{|\De\ol{\X}_{2i}|^2}-1\right|\\
&\le&\(\dfrac{|\De\ol{\pvep}_{2i}|}{|\De\ol{\X}_{2i}|}\)^2+2\dfrac{|\De\ol{\pvep}_{2i}|}{|\De\ol{\X}_{2i}|},
\end{eqnarray*}
 to prove \eqref{prop1}, it  suffices to show
\[
\max_{1\le i\le m} ~
\dfrac{|\De\ol{\pvep}_{2i}|}{|\De\ol{\X}_{2i}|} \to 0 ~~~~~~~~ {\rm almost ~surely}.
\]
Below, we shall prove the following slightly stronger result:
\begin{eqnarray}\label{ep_x}
\max_{1\le i\le m,1\le j\le p} ~
\dfrac{\sqrt{p}~|\De\ol{\varepsilon}_{2i}^{j}|}{|\De\ol{\X}_{2i}|}~ \to~ 0 ~~~~~~ {\rm almost ~surely},
\end{eqnarray}
where for any vector ${\bf a}$,  $a^{j}$ denotes its $j$th entry.

Note further that for \eqref{prop2},  using  Lemma \ref{lemma1} in  Appendix \ref{appendix:lemmas},  to prove \eqref{prop2}, it also suffices to show \eqref{ep_x}.

We now prove \eqref{ep_x}. We start with the denominator term $\De\ol{\X}_{2i}$.
We have $\De\ol{\X}_{2i}=\wt{\V}_i+\wt{\Z}_i$ for $\wt{\V}_i$ and $\wt{\Z}_i$ as defined in \eqref{Vi} and \eqref{Zi}, respectively.
Write $\wt{\Z}_i$ as $
\sqrt{w_i}
~ \pLa \Z_i$, where $w_i$ is defined in \eqref{wie} and
\begin{eqnarray*}
\Z_i=
\sum_{|j|<k}  \frac{1}{\sqrt{w_i}} \(1-\dfrac{|j|}{k}\)\int_{((2i-1)k+j-1)/n}^{((2i-1)k+j)/n} \ga_t\, d\W_t.
\end{eqnarray*}

 By Assumption \eqref{asm:leverage_2}, for all $j\notin \mathcal{I}_p$, $Z_i^{j}$ are \hbox{i.i.d.} $N(0,1)$. By using the same trick as in the proof of (3.34) in \cite{ZL11}, we have
\begin{equation}\label{eq:mv_normal_norm}
\max_{1\le i\le m} ~
\left|\dfrac{1}{p}|\pLa \Z_i|^2 - 1\right| ~\to ~ 0 ~~~~~~{\rm almost~surely}.
\end{equation}
Note that
\begin{eqnarray*}
|\De\ol{\X}_{2i}|^2
~=~|\wt{\V}_i+\wt{\Z}_i|^2
~\geq~|\wt{\V}_i|^2+|\wt{\Z}_i|^2-2|\wt{\V}_i||\wt{\Z}_i|.
\end{eqnarray*}

Assumption \eqref{asm:gamma_bdd} implies that for  all $i$, there exists $\wt{C}_1 $ such that
\[
|w_i| ~\geq ~ \wt{C}_1\dfrac{k}{n}.
\]
Therefore, by Assumption \eqref{asm:ym_conv}, there exists $C>0$ such that
\[
|\wt{\Z}_i|^2~=~
{|w_i|}
~|\pLa\Z_i|^2
~\geq~\dfrac{C}{p}|\pLa\Z_i|^2,
\]
which, together with \eqref{eq:mv_normal_norm}, implies that there exists $\delta_1>0$ such that for all sufficiently large $n$,
\[
\min_{1\le i\le m} ~ |\wt{\Z}_i|^2 \geq \delta_1.
\]
Moreover, by Assumption \eqref{asm:mu_bdd},
$|\wt{V}_i^{j}|\le C k/n$ for all $i,j$;  hence, $\max_i |\wt{\V}_i|=O(\sqrt{p}\times k/n)$, which, by Assumption \eqref{asm:ym_conv}, is $O(\sqrt{1/m})=o(1)$.   Therefore, there exists a constant $\delta>0$ such that, almost surely, for sufficiently large all~$n$,
\begin{equation}\label{eq:delta_X_bdd}
\min_{1\le i\le m} ~ |\De\ol{\X}_{2i}|^2 ~\geq~ \delta.
\end{equation}

It remains to prove that
\begin{eqnarray}\label{lim_eps}
\max_{1\le i\le m,1\le j\le p} ~
\sqrt{p}~ | \De\ol{\varepsilon}_{2i}^{j}|~\to~ 0,  ~~~ {\rm almost ~ surely.}
\end{eqnarray}
Observe that if we can show that there exists $C>0$ such that
\begin{eqnarray}\label{dev_lim_eps}
\max_{1\le i\le m,1\le j\le p} E|\De\ol{\vep}_{2i}^j|^{2\ell} \ \le \ Ck^{-\ell},
\end{eqnarray}
where $\ell $ is the integer in  Assumption~\eqref{asm:ym_conv}, then, for any $\varepsilon>0$, by Markov's inequality, we have
\begin{eqnarray*}
P\(\max_{1\le i\le m,1\le j\le p} \sqrt{p}~|\De\ol{\varepsilon}_{2i}^{j}|\geq\varepsilon\)
&\le& \sum_{1\le i\le m,1\le j\le p} \dfrac{p^\ell~ E|\De\ol{\varepsilon}_{2i}^{j}|^{2\ell}}{\varepsilon^{2\ell}}\\
&\le& \frac{C mp\cdot p^{\ell}}{k^\ell\varepsilon^{2\ell}} =
O\(\frac{1}{n^{(2+2\ell)\alpha - 2 - \ell}}\),
\end{eqnarray*}
where the last equation is due to Assumption \eqref{asm:ym_conv}. Because\\
$\al > (3+\ell)/(2\ell+2)$ by Assumption \eqref{asm:ym_conv} again, we have
$(2+2\ell)\alpha - 2 - \ell > 1$; hence, by the Borel-Cantelli Lemma, \eqref{lim_eps} holds.

We now show \eqref{dev_lim_eps}, which is a Marcinkiewicz-Zygmund type inequality.
We use Theorem 1 in \cite{DL99} to prove \eqref{dev_lim_eps}.
For that, we need to verify that $C_{r,2\ell}=O(r^{-\ell})$, where
\[
C_{r,2\ell}:= \max_{j=1,\ldots,p}\ \max_{1<M<2\ell} \sup_{(i_1,\cdots,i_{2\ell})\in\Theta_{r,M,2\ell}} \left| \cov( \vep_{i_1}^j \cdots \vep_{i_M}^j, \ \vep_{i_{M+1}}^j  \cdots \vep_{i_{2\ell}}^j ) \right|,
\]
where
$\Theta_{r,M,2\ell}=\{(i_1,\cdots,i_{2\ell}): i_1\le\ldots \le i_M < i_M+r \le i_{M+1}\le \ldots \le i_{2\ell}\}.$

We now verify that $C_{r,2\ell}=O(r^{-\ell})$.
For any $j$ and for any $(i_1,\cdots,i_{2\ell})\in\Theta_{r,M,2\ell}$, using the definition of $\rho$-mixing coefficients we have
\begin{eqnarray*}
&&\left| \cov( \vep_{i_1}^j  \cdots  \vep_{i_M}^j,\  \vep_{i_{M+1}}^j  \cdots \vep_{i_{2\ell}}^j ) \right| \\
&\le & \rho^j(r) \cdot \sqrt{ \var(\vep_{i_1}^j  \cdots  \vep_{i_M}^j) \cdot \var(\vep_{i_{M+1}}^j  \cdots \vep_{i_{2\ell}}^j) } \\
&\le & \rho^j(r) \cdot \sqrt{E((\vep_{i_1}^j)^2 \cdots (\vep_{i_M}^j)^2) \cdot E((\vep_{i_{M+1}}^j)^2\cdots (\vep_{i_{2\ell}}^j)^2 )}.
\end{eqnarray*}
By H\"{o}lder's inequality, we have
\[
E((\vep_{i_1}^j)^2 \cdots (\vep_{i_M}^j)^2) \ \le \ \(E(\vep_{i_1}^j)^{2M} \)^{1/M} \cdots \(E(\vep_{i_M}^j)^{2M} \)^{1/M}.
\]
And, similarly, for $E((\vep_{i_{M+1}}^j)^2 \cdots (\vep_{i_{2\ell}}^j)^2 )$.
Because $(\vep_i^j)$s have bounded $4\ell$th moments  
and $\max_{j=1,\cdots,p}  \rho^j(r)=O(r^{-\ell})$ by Assumption \eqref{asm:eps_general}, we get $C_{r,2\ell}=O(r^{-\ell})$.

Finally, by using an argument  similar to the last part of the proof of Proposition 8 in \cite{ZL11} (see pp.3142--3143), we have that $\widetilde{\tpSi}$
has the same LSD as
\[
\widetilde{\S}:= \ \dfrac{1}{m}\sum_{i=1}^m \pLa \Z_i \Z_i^T \pLa^T,
\]
where $\Z_i$ consists of independent standard normals.
It is well known that the LSD of $\widetilde{\S}$ is determined by \eqref{eqn:B_n}; hence, by the previous arguments, so is that of $\widetilde{\pSi}$.
\end{proof}

We now prove Lemma \ref{pthm3_a}.
\begin{proof}[Proof of Lemma \ref{pthm3_a}]
We have
\[
 \sum_{i=1}^m |\De\ol{\Y}_{2i}|^2
=\sum_{i=1}^m |\De\ol{\X}_{2i}|^2+ 2 \sum_{i=1}^m \De\ol{\X}_{2i}^T  \De\ol{\pvep}_{2i} + \sum_{i=1}^m|\De\ol{\pvep}_{2i}|^2.
\]
The convergence \eqref{lim_eps} and $p/m\to y$ imply that $\sum_{i=1}^m|\De\ol{\pvep}_{2i}|^2/p\to 0$ almost surely.
To prove the lemma, it then suffices to show that
\begin{equation}\label{subpav_conv}
  3\frac{\sum_{i=1}^m |\De\ol{\X}_{2i}|^2}{p}\to \zeta\q\mbox{almost surely},
\end{equation}
and
\begin{equation}\label{subpav_error_negligible}
  \frac{\sum_{i=1}^m \De\ol{\X}_{2i}^T  \De\ol{\pvep}_{2i}}{p}\to 0\q\mbox{almost surely}.
\end{equation}

We start with \eqref{subpav_conv}. Write $\De\ol{\X}_{2i}=\wt{\V}_i+\wt{\Z}_i$  as in the proof of Proposition~\ref{pthm3}. The convergence \eqref{eq:mv_normal_norm} implies that
\[
\frac{\sum_{i=1}^m |\wt{\Z}_i|^2}{p} = \sum_{i=1}^m w_i + \mbox{error},
\]
where the error term converges to 0 almost surely. By Riemann integration and Assumption \eqref{asm:gamma_conv} it is easy to show that  $\sum_{i=1}^m w_i\to \zeta/3$, so we get
\[
3\frac{\sum_{i=1}^m |\wt{\Z}_i|^2}{p} \to \zeta \q\mbox{almost surely}.
\]
Furthermore, by using the bound that $\max_i |\wt{\V}_i|=O(\sqrt{p}\times k/n)$ one can easily show that
\[
\frac{2\sum_{i=1}^m |\wt{\V}_i||\wt{\Z}_i| +  \sum_{i=1}^m |\wt{\V}_i|^2}{p} \to   0\q\mbox{almost surely}.
\]
We therefore get \eqref{subpav_conv}.

Finally, \eqref{subpav_error_negligible} follows from \eqref{lim_eps} and \eqref{subpav_conv}.
\end{proof}


\section{Verifying $\wh{m_{\cA_n}(z)} \approx m_{\cA_n}(z)$}\label{appendix:claim}

Suppose that $\wh{m_{\cA_n}(z)}\in D_{\cA}(y_n,\si_n^2)\\ (= \{\xi\in\bC: ~
z(1-y_n\si_n^2 \xi)^2- \si_n^2(y_n-1)(1-y_n\si_n^2 \xi) \in\bC^+\})$
satisfies the empirical version of equation~\eqref{eqn:LSD_signal_to_noisy}; in other words,
\begin{equation}\label{eqn:LSD_signal_to_noisy_emp}
\wh{m_{\cA_n}(z)} = \displaystyle\int
\dfrac{dF^{\S_n}(\tau)}{\dfrac{\tau}{1-y_n\si_n^2 \wh{m_{\cA_n}(z)}}-z(1-y_n\si_n^2 \wh{m_{\cA_n}(z)})+\si_n^2(y_n-1)}.
\end{equation}

First, we claim that $\{\wh{m_{\cA_n}(z)}\}$ is tight (regardless of whether $\wh{m_{\cA_n}(z)}$ is the unique solution or not).
Suppose to the contrary that $\{\wh{m_{\cA_n}(z)}\}$ is not tight; then, with positive probability, there exists a subsequence $\{n_k\}$ such that $|\wh{m_{\cA_{n_k}}(z)}|\to\infty$. However, by the tightness of $\{F^{\S_n}\}$, along such a subsequence, the right-hand side of \eqref{eqn:LSD_signal_to_noisy_emp} would converge to 0, while the left-hand side blows up, a contradiction.

Next we show that any limit of $\wh{m_{\cA_n}(z)}$ as $n\to\infty$, denoted by $\wt{m}$, has to be $m_\cA(z)$. Because $\wh{m_{\cA_n}(z)}\in D_{\cA}(y_n,\si_n^2)$, the limit $\wt{m}$ satisfies
\[
\Im\left(z(1-y\si^2 \wt{m})^2- \si^2(y -1)(1-y\si^2 \wt{m})\right) \geq 0.
\]
We want to show that $\wt{m}\in D_{\cA}(y,\si^2)$; in other words, the equality sign cannot hold. This can be be seen as follows.  First, by rewriting \eqref{eqn:LSD_signal_to_noisy_emp} as
\begin{eqnarray*}
\wh{m_{\cA_n}(z)} &=& (1-y_n\si_n^2 \wh{m_{\cA_n}(z)}) \\
&& \cdot \displaystyle\int
\dfrac{dF^{\S_n}(\tau)}{\tau-z(1-y_n\si_n^2 \wh{m_{\cA_n}(z)})^2+\si_n^2(y_n-1)(1-y_n\si_n^2 \wh{m_{\cA_n}(z)})},
\end{eqnarray*}
we see that $(1-y_n\si_n^2 \wh{m_{\cA_n}(z)})$ cannot converge to 0 (because otherwise the right-hand side would converge to 0 while the left-hand side would converge to
$1/(y\si^2)\neq 0$). It follows by taking the limits on both sides of the equation above that $\wt{m}$ satisfies
\[
\frac{\wt{m}}{1-y\si^2\wt{m}}=\displaystyle\int
\dfrac{dF(\tau)}{\tau-z(1-y\si^2 \wt{m})^2- \si^2(y -1)(1-y\si^2 \wt{m})}.
\]
Now, if $\Im\left(z(1-y\si^2 \wt{m})^2- \si^2(y -1)(1-y\si^2 \wt{m})\right)=0$, then the right-hand side would be a real number;  consequently, $\wt{m}$ has to be a real number as well. However, because we have just proved that $(1-y\si^2 \wt{m})\neq 0$, if $\wt{m}$ is a real number, then $\Im\left(z(1-y\si^2 \wt{m})^2- \si^2(y -1)(1-y\si^2 \wt{m})\right)$ cannot be zero, a contradiction.

To sum up, we have shown that $\wt{m}$ satisfies equation~\eqref{eqn:LSD_signal_to_noisy} and is inside $D_{\cA}(y,\si^2)$. Therefore, by the uniqueness of the solution to  equation~\eqref{eqn:LSD_signal_to_noisy} inside $D_{\cA}(y,\si^2)$, we have $\wt{m}=m_\cA(z)$. Consequently, $\wh{m_{\cA_n}(z)}\approx m_\cA(z)$ when $n$ (and $p$) are large.

Finally, because $m_{\cA_n}(z) \to m_\cA(z)$, we have that $\wh{m_{\cA_n}(z)}\approx m_{\cA_n}(z)$.

\renewcommand{\baselinestretch}{1.2}
\section{Two lemmas}\label{appendix:lemmas}
\setcounter{equation}{0}
\renewcommand{\theequation}{\thesection.\arabic{equation}}

\begin{lem}(Lemma 2.7 in \cite{BS98} ).
Let ${\bf X}=(X_1,\ldots,X_n)^T$ be a vector where the $X_i$s are centered \hbox{i.i.d.} random variables with unit variance. Let ${\bf A}$ be an $n\times n$ deterministic complex matrix. Then, for any $p\geq 2$,
\[
E\left|{\bf X}^T{\bf A}{\bf X}-\tr{\bf A}\right|^p\le C_p
\left(
\left(E|X_1|^4\tr{\bf A}{\bf A}^*\right)^{p/2}+E|X_1|^{2p}\tr({\bf A}{\bf A}^*)^{p/2}
\right).
\]
\label{xtrx}
\end{lem}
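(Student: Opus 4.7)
The plan is the classical martingale-decomposition argument from the original Bai--Silverstein paper. First, split the centered quadratic form into diagonal and off-diagonal parts,
\[
\X^T\A\X-\tr\A \ = \ \sum_{i=1}^n A_{ii}(X_i^2-1) \ + \ \sum_{i\neq j} A_{ij}X_iX_j \ =: \ S_d + S_o,
\]
and use $|a+b|^p\le 2^{p-1}(|a|^p+|b|^p)$ to bound $E|S_d|^p$ and $E|S_o|^p$ separately.

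The diagonal piece $S_d$ is a sum of independent centered random variables (each with variance $A_{ii}^2(EX_1^4-1)$), so Rosenthal's inequality yields
\[
E|S_d|^p \le C_p\left(\left(E|X_1|^4 \sum_i A_{ii}^2\right)^{p/2} + E|X_1|^{2p}\sum_i|A_{ii}|^p\right)
\]
after absorbing $E|X_1^2-1|^p \le C_p E|X_1|^{2p}$. To convert the coefficient sums into traces, note $\sum_i A_{ii}^2\le\tr(\A\A^*)$ because $|A_{ii}|^2\le(\A\A^*)_{ii}=\sum_j|A_{ij}|^2$; and by Schur--Horn majorization of the diagonal of $\A\A^*$ by its eigenvalues together with convexity of $x\mapsto x^{p/2}$ on $[0,\infty)$ (valid since $p\ge 2$), $\sum_i|A_{ii}|^p\le\sum_i((\A\A^*)_{ii})^{p/2}\le\tr((\A\A^*)^{p/2})$.

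For the off-diagonal part, set $\mathcal{F}_k=\sigma(X_1,\dots,X_k)$ and write $S_o=\sum_{k=2}^n D_k$ with $D_k:=X_kY_k$ and $Y_k:=\sum_{j<k}(A_{kj}+A_{jk})X_j\in\mathcal{F}_{k-1}$; since $E[X_k]=0$ and $Y_k$ is $\mathcal{F}_{k-1}$-measurable, $\{D_k\}$ is a martingale-difference sequence. The Burkholder--Rosenthal inequality then yields
\[
E|S_o|^p \le C_p\left(E\left(\sum_k|Y_k|^2\right)^{p/2} + E|X_1|^p \sum_k E|Y_k|^p\right),
\]
using $E[|D_k|^2\mid\mathcal{F}_{k-1}]=|Y_k|^2$. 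The second summand is controlled by another application of Rosenthal to each $Y_k$ (itself a sum of independent centered variables), followed by the inequalities $\sum_k a_k^{p/2}\le(\sum_k a_k)^{p/2}$ for nonnegative $a_k$ with $p\ge 2$ and $\sum_{ij}|A_{ij}|^p\le\tr((\A\A^*)^{p/2})$ (the latter from $\|(A_{ij})_i\|_p\le\|(A_{ij})_i\|_2$ per column for $p\ge 2$, combined with Schur--Horn).

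The main obstacle is the conditional-variance term $E(\sum_k|Y_k|^2)^{p/2}$. I would handle it by observing that $\sum_k|Y_k|^2=\X^T B\X$ for a deterministic positive semidefinite matrix $B$ built from the off-diagonal symmetrization of $\A$, with $\tr B\le C\tr(\A\A^*)$ and (via the boundedness of triangular truncation on Schatten $p$-classes for $1<p<\infty$) $\tr(B^{p/2})\le C_p\tr((\A\A^*)^{p/2})$. Closing the loop then proceeds by induction on $p$, starting from the base case $p=2$ (where $E\X^T B\X=\tr B$ suffices directly) and applying the inductive hypothesis to the centered form $\X^T B\X - \tr B$, with interpolation handling non-dyadic exponents. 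Assembling the $S_d$ and $S_o$ bounds produces the stated inequality.
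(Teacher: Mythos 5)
This lemma is quoted from Bai--Silverstein (1998, Lemma 2.7); the paper states it in Appendix E as a cited ingredient with no proof of its own, so there is no in-paper argument to compare against. Your sketch follows the standard route that the original proof also takes: split the centered quadratic form into a diagonal sum of independent variables plus an off-diagonal martingale, control the former by Rosenthal and the latter by Burkholder--Rosenthal, and bootstrap the conditional-variance term. The two structural observations you lean on --- writing $\sum_k |Y_k|^2 = \X^T B\X$ with $B=L^*L$ for $L$ the strict lower triangle of $C=\A+\A^T$, and using Schatten-class boundedness of triangular truncation to control $\tr(B^{p/2})$ by $\tr((\A\A^*)^{p/2})$ --- are correct, as is the Schur--Horn majorization step that converts diagonal power sums into Schatten traces.

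There is, however, one genuine gap in the off-diagonal ``big jump'' term. After Rosenthal on each $Y_k=\sum_{j<k}C_{kj}X_j$ you face $E|X_1|^p\sum_k E|Y_k|^p$, whose first Rosenthal contribution is $\sum_k\bigl(\sum_{j<k}|C_{kj}|^2\bigr)^{p/2}$. You propose to bound this via $\sum_k a_k^{p/2}\le(\sum_k a_k)^{p/2}$, landing on $E|X_1|^p\,(\tr CC^*)^{p/2}$, and that quantity is not dominated by the lemma's right-hand side. Matching it against $(E|X_1|^4\tr\A\A^*)^{p/2}$ would require $E|X_1|^p\le C_p(E|X_1|^4)^{p/2}$, which fails for heavy-tailed $X_1$ once $p>4$; matching it against $E|X_1|^{2p}\tr((\A\A^*)^{p/2})$ would require $(\tr\A\A^*)^{p/2}$ to be bounded by a constant multiple of $\tr((\A\A^*)^{p/2})$, but when $\A\A^*$ has $n$ comparable eigenvalues the ratio is of order $n^{p/2-1}$. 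The fix is exactly the Schur--Horn move you already use for the other Rosenthal contribution: bound $\sum_{j<k}|C_{kj}|^2\le(CC^*)_{kk}$ and then $\sum_k((CC^*)_{kk})^{p/2}\le\tr((CC^*)^{p/2})\le C_p\tr((\A\A^*)^{p/2})$, so that the entire big-jump term is at most $C_p\,E|X_1|^{2p}\tr((\A\A^*)^{p/2})$ after using $E|X_1|^p\le E|X_1|^{2p}$. With that replacement the argument closes.
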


\begin{lem}(Lemma 1 in \cite{ZL11}).
Suppose that for each $p$, ${\bf v}_l=(v_l^{1},\ldots,v_l^{p})^T$ and ${\bf w}_l=(w_l^{1},\ldots,w_l^{p})^T$, $l=1,\ldots,m$, are all $p$-dimensional vectors. Define
\[
\wt{{\bf S}}_m=\sum_{l=1}^m({\bf v}_l+{\bf w}_l)({\bf v}_l+{\bf w}_l)^T
~~ \mbox{and} ~~
{\bf S}_m=\sum_{l=1}^m{\bf w}_l({\bf w}_l)^T.
\]
If the following conditions are satisfied,
\begin{compactenum}[(i)]
  \item $m=m(p)$ with $\lim_{p\to\infty} p/m=y>0$,
  \item there exists a sequence $\varepsilon_p=o(1/\sqrt{p})$ such that for all $p$ and all $l$, all the entries of ${\bf v}_l$ are bounded by $\varepsilon_p$ in absolute value;
  \item $\limsup_{p\to\infty} \tr({\bf S}_m)/p<\infty$ almost surely.
\end{compactenum}
Then, $L(F^{\wt{S}_m},F^{S_m})\to 0$ almost surely, where for any two probability distribution functions $F$ and $G$, $L(F,G)$ denotes the Levy distance between them.
\label{lemma1}
\end{lem}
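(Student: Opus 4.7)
My plan is to pass through Stieltjes transforms. Assembling the $\mathbf{v}_l$ and $\mathbf{w}_l$ as columns of $p\times m$ matrices $V$ and $W$, one has $S_m = W W^T$, $\wt{S}_m = (V+W)(V+W)^T$, and
\[
D := \wt{S}_m - S_m = VV^T + VW^T + WV^T.
\]
Two estimates drive everything. First, by (i) and (ii),
\[
\|V\|_F^2 \;=\; \sum_{l,j}(v_l^{j})^2 \;\le\; m p\,\varepsilon_p^2 \;=\; \frac{m}{p}\cdot(p\varepsilon_p^2)\cdot p \;=\; o(p),
\]
since $m/p\to 1/y$ and $p\varepsilon_p^2 \to 0$. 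Second, by (iii), $\|W\|_F^2 = \tr(S_m) = O(p)$ almost surely. Consequently $\|V\|_F\|W\|_F = o(\sqrt p)\cdot O(\sqrt p) = o(p)$ a.s., and $\tr(\wt{S}_m)/p \le (\|V\|_F+\|W\|_F)^2/p = O(1)$ a.s.; by Markov both $\{F^{S_m}\}$ and $\{F^{\wt{S}_m}\}$ are a.s.\ tight.

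The main technical step is pointwise Stieltjes convergence. Fix $z\in\mathbb{C}^+$ with $v := \Im z > 0$, and let $A := (\wt{S}_m - zI)^{-1}$ and $B := (S_m - zI)^{-1}$, each of spectral norm at most $1/v$. The resolvent identity gives $m_{\wt{S}_m}(z)-m_{S_m}(z) = -p^{-1}\tr(A\,D\,B)$. Decomposing $D$ and using Cauchy-Schwarz for the Hilbert-Schmidt inner product,
\[
|\tr(A\,VV^T\,B)| \;=\; \Big|\sum_{l=1}^m \mathbf{v}_l^{T}(BA)\mathbf{v}_l\Big| \;\le\; \|BA\|\cdot\|V\|_F^2 \;\le\; v^{-2}\|V\|_F^2,
\]
\[
|\tr(A\,VW^T\,B)| \;=\; |\tr(W^T BAV)| \;\le\; \|W\|_F\,\|BAV\|_F \;\le\; v^{-2}\|W\|_F\|V\|_F,
\]
and symmetrically for $\tr(A\,WV^T\,B)$. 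Combining,
\[
\big|m_{\wt{S}_m}(z) - m_{S_m}(z)\big| \;\le\; \frac{\|V\|_F^2 + 2\|V\|_F\|W\|_F}{p\,v^2} \;=\; \frac{o(1)}{v^2}\quad \text{a.s.}
\]
Picking a countable dense $Z\subset\mathbb{C}^+$ and intersecting the full-measure events at each $z\in Z$ yields one event of probability one on which the convergence holds for every $z\in Z$; by Montel's theorem plus the identity principle (both Stieltjes transforms are analytic and bounded by $1/\Im z$ on compacts of $\mathbb{C}^+$), this extends to $m_{\wt{S}_m}(z)-m_{S_m}(z)\to 0$ for every $z\in\mathbb{C}^+$.

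To upgrade to Lévy convergence, fix $\omega$ in this event and any subsequence. By tightness, pass to a further subsequence along which both $F^{\wt{S}_m}$ and $F^{S_m}$ converge weakly to limits $\wt{F}_\infty$ and $F_\infty$. The pointwise Stieltjes convergence forces $m_{\wt{F}_\infty} \equiv m_{F_\infty}$ on $\mathbb{C}^+$, so $\wt{F}_\infty = F_\infty$ and $L(F^{\wt{S}_m}, F^{S_m})\to 0$ along this sub-subsequence. As this holds for every subsequence, $L(F^{\wt{S}_m}, F^{S_m})\to 0$ almost surely.

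The hard part will be controlling the cross term $\tr(A\,VW^T\,B)$. Because $W$ is bounded only in Frobenius norm through $\tr(S_m) = O(p)$ and never in spectral norm, the naive expansion of $\tr(D^2)$ yields only $o(p^2)$, so the classical bound $L^3 \le p^{-1}\tr(D^2)$ gives only $o(p)$ and is useless. The critical move is to pair $W$ against $BAV$ via the Hilbert-Schmidt inner product, letting the spectral-norm bounds on $A$ and $B$ collapse into the $v^{-2}$ prefactor and leaving precisely $\|W\|_F\|V\|_F = O(\sqrt p)\cdot o(\sqrt p) = o(p)$; this clean separation of sizes is exactly what assumption (ii) provides.
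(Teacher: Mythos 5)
Your proof is correct. Note first that the paper does not prove this lemma at all: it is quoted verbatim from Zheng and Li (2011), and the proof there is essentially a one-line application of the rank/difference inequality for Gram matrices (Corollary A.41--A.42 in Bai and Silverstein's book), namely
\[
L^4\!\left(F^{(V+W)(V+W)^T},\,F^{WW^T}\right)\;\le\;\frac{2}{p^2}\,\tr\!\left(VV^T\right)\,\tr\!\left(\wt{S}_m+S_m\right)\;=\;\frac{2}{p^2}\cdot o(p)\cdot O(p)\;=\;o(1),
\]
which is exactly the inequality tailored to the situation you identify in your last paragraph: the perturbation is small in Frobenius norm while the unperturbed factor is controlled only through its trace. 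You correctly recognize that the Hermitian-difference bound $L^3\le p^{-1}\tr(D^2)$ fails here, but you appear not to know the $L^4$ product version, so you rebuild the conclusion by hand through the resolvent identity, Hilbert--Schmidt Cauchy--Schwarz on the cross terms, tightness via Markov, and a subsequence/identity-principle argument to pass from pointwise Stieltjes convergence to L\'evy convergence. Every step of that route checks out (the bound $|\tr(ADB)|\le v^{-2}(\|V\|_F^2+2\|V\|_F\|W\|_F)$ is deterministic, so a single full-measure event from (iii) suffices and the intersection over a countable dense set is not even needed), so what you lose in brevity relative to the citation-based argument you gain in self-containedness: your version exposes exactly which norms of $V$ and $W$ matter and why, rather than delegating that to a black-box matrix inequality.
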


\section{ Two figures in the simulation studies}\label{appendix:2_figures}
Figure \ref{fig:gamma_t} plots a sample path of $(\ga_t)$.
\begin{figure}[H]
\begin{center}
\includegraphics[width=9cm]{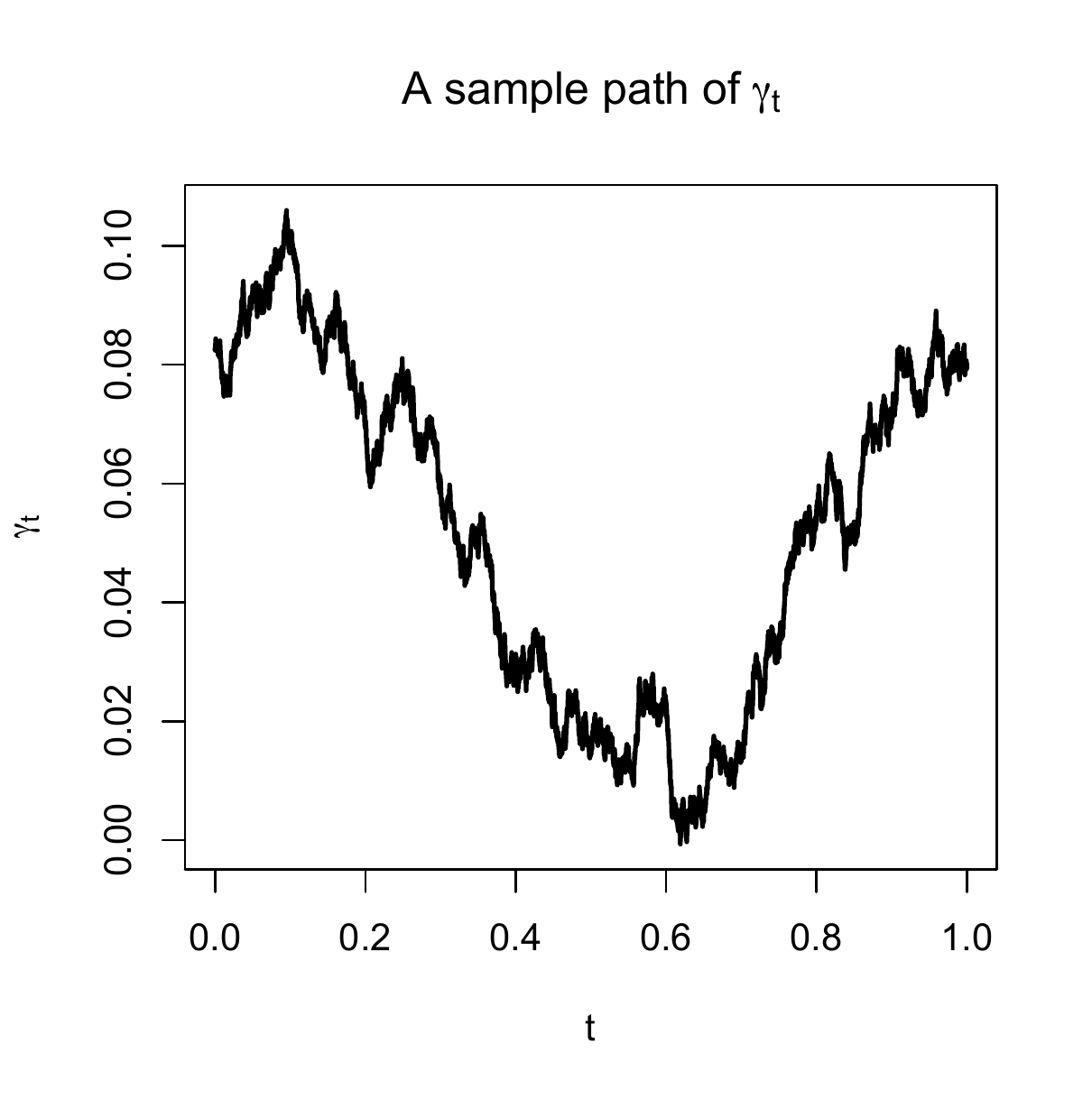}
\end{center}
\caption{A sample path of the process $(\ga_t)$.}
\label{fig:gamma_t}
\end{figure}

Figure \ref{fig:poisson} shows a realization of observation times for three stocks under the asynchronous observation setting.
\begin{figure}[H]
\includegraphics[width=12cm]{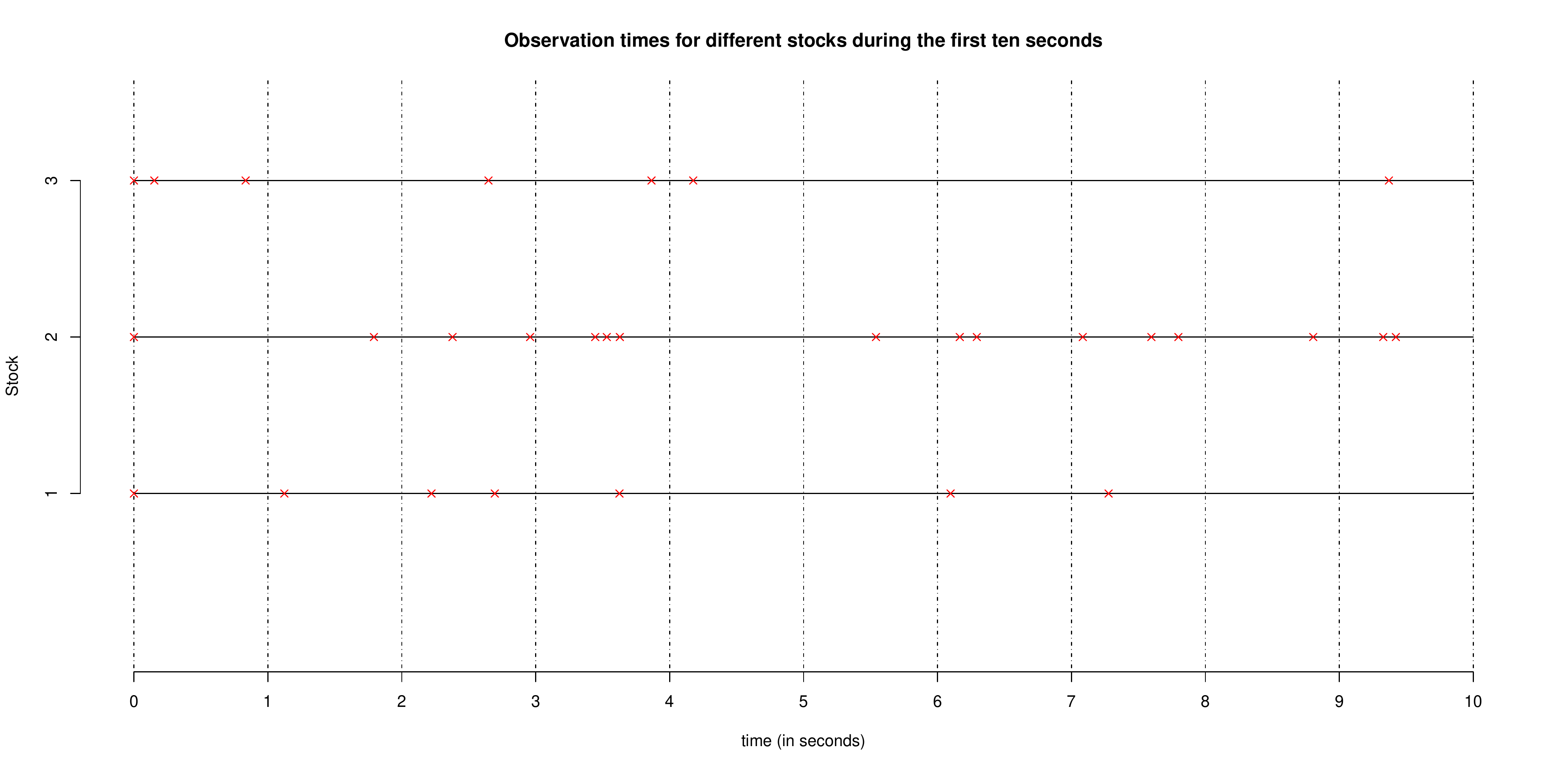}
\caption{One realization of the observation times during the first 10 seconds for three stocks. The observation times are generated as independent Poisson processes with rate 23,400.}
\label{fig:poisson}
\end{figure}

\end{supplement}

\end{document}